\newlength{\defbaselineskip}
\theoremstyle{plain}
\newtheorem{theorem}{Theorem}
\newtheorem{lemma}[theorem]{Lemma}
\newtheorem{prop}[theorem]{Proposition}
\newtheorem{corollary}[theorem]{Corollary}
\theoremstyle{definition}
\newtheorem{example}[theorem]{Example}
\newtheorem{remark}[theorem]{Remark}
\newcommand{\rev}[1]{\overleftarrow{#1}}
\newcommand{\Z}{\mathbb{Z}}
\newcommand{\Q}{\mathbb{Q}}
\newcommand{\R}{\mathbb{R}}
\newcommand{\N}{\text{N}}
\newcommand{\E}{\text{E}}
\newcommand{\mcG}{\mathcal{G}}
\newcommand{\mcT}{\mathcal{T}}
\newcommand{\ch}{D}
\newcommand{\chmax}{D_{\text{max}}}
\newcommand{\ee}{\mathbf{e}}
\newcommand{\LL}{\mathbf{L}}
\newcommand{\ww}{\mathbf{w}}
\DeclareMathOperator{\area}{area}
\DeclareMathOperator{\AV}{AV}
\DeclareMathOperator{\Cat}{Cat}
\DeclareMathOperator{\dinv}{dinv}
\DeclareMathOperator{\lvl}{lvl}
\DeclareMathOperator{\Pic}{Pic}
\DeclareMathOperator{\pos}{pos}
\DeclareMathOperator{\run}{run}
\DeclareMathOperator{\skc}{SKC}
\DeclareMathOperator{\skf}{SKF}
\DeclareMathOperator{\skp}{SKP}
\DeclareMathOperator{\skv}{SKV}
\DeclareMathOperator{\stat}{stat}
\DeclareMathOperator{\wt}{wt}
\newcommand{\Dmngen}{\mathcal{D}_{n}^m}
\newcommand{\Dmn}[2]{\mathcal{D}_{#2}^{#1}}
\newcommand{\Dmnkgen}{\mathcal{D}_{n}^{m,k}}
\newcommand{\Dmnk}[3]{\mathcal{D}_{#2}^{#1,#3}}
\newcommand{\Cmnk}[3]{C_{#2}^{#1,#3}}
\newcommand{\Cmnkgen}{C_{n}^{m,k}}
\newcommand{\Cmngen}{C_{n}^m}
\newcommand{\Cmn}[2]{C_{#2}^{#1}}
\newcommand{\phibare}{\varphi}
\newcommand{\phimnk}[3]{\varphi_{#2}^{#1,#3}}
\newcommand{\phimnkgen}{\varphi_{n}^{m,k}}
\newcommand{\psibare}{\psi}
\newcommand{\psimnk}[3]{\psi_{#2}^{#1,#3}}
\newcommand{\psimnkgen}{\psi_{n}^{m,k}}
\newcommand{\zcomm}{\llap{\phantom{,}}}
\begin{document}

\title{Skeletal generalizations of Dyck paths, parking functions, and chip-firing games}

\thanks{This work was supported by grants from the Simons
  Foundation/SFARI (\#854037, S.B.; \#633564, N.A.L.; \#429570,
  G.S.W.); by NSF Grant DMS-2246967 (S.B.); and by internal University
  of Vermont funding from the Office of the Vice President for Research
  (C.C., M.O.) and from the Department of Mathematics \& Statistics
  (P.M.)}

\author[]{Spencer Backman}
\address{Dept. of Mathematics and Statistics, University of Vermont, Burlington, VT 05401}
\email{spencer.backman@uvm.edu}

\author[]{Cole Charbonneau}
\email{colecharbonneau@gmail.com}

\author[]{Nicholas A. Loehr}
\address{Dept. of Mathematics, Virginia Tech, Blacksburg, VA 24061-0123 }
\email{nloehr@vt.edu}

\author[]{Patrick Mullins}
\email{patricksmichaelsmullins@gmail.com}

\author[]{Mazie O'Connor}
\address{Dept. of Mathematics and Statistics, University of Vermont, Burlington, VT 05401}
\email{mazie.oconnor@uvm.edu}

\author[]{Gregory S. Warrington}
\address{Dept. of Mathematics and Statistics, University of Vermont, Burlington, VT 05401}
\email{gregory.warrington@uvm.edu}

\begin{abstract}
  For $0\leq k\leq n-1$, we introduce a family of \emph{$k$-skeletal
  paths} which are counted by the $n$\nobreakdash-th Catalan number for each $k$,
  and specialize to Dyck paths when $k=n-1$.  We similarly introduce
  \emph{$k$-skeletal parking functions} which are equinumerous with
  the spanning trees on $n+1$ vertices for each $k$, and specialize to
  classical parking functions for $k=n-1$. The preceding constructions
  are generalized to paths lying in a trapezoid with base $c > 0$ and
  southeastern diagonal of slope $1/m$; $c$ and $m$ need not be
  integers. We give bijections among these families when $k$ varies
  with $m$ and $c$ fixed.  Our constructions are motivated by chip
  firing and have connections to combinatorial representation theory
  and tropical geometry.
\end{abstract}

\maketitle

\noindent\textbf{Keywords:} chip firing, skeletal objects, lattice paths,
 Dyck paths, parking functions, Catalan numbers, ballot numbers.
 
\noindent\textbf{2020 MSC Subject Classifications:} 05A15, 05A19, 05C57.

\section{Introduction}
\label{sec:intro} 

Motivated by developments in chip firing, tropical geometry,
and combinatorial representation theory, we introduce many new families
of combinatorial objects, called \emph{$k$-skeletal paths}
and \emph{$k$-skeletal functions}, that depend on $k$ and certain 
other parameters. These objects generalize Dyck paths, parking functions,
and lattice paths inside trapezoids and thereby provide new combinatorial
interpretations of Catalan numbers, ballot numbers, parking function
counts, and $q$-analogues of these numbers. 
As an initial special case of these ideas, 
we describe our generalizations of Dyck paths of order $n$. Given $k,n$ 
with $0\leq k<n$, define a \emph{$k$-skeletal path} to be a path from $(0,0)$ 
to $(n,n)$ consisting of unit-length east steps and north steps satisfying 
these two conditions:
\begin{enumerate}
  \item[(K1)] The last $k+1$ north steps start weakly to the left of 
  the line $x=y$. 
  \item[(K2)] There do not exist $k+1$ consecutive rows in which
    the north steps all start strictly to the left of the line $x=y$.
\end{enumerate}
Our first main result is Theorem~\ref{thm:main-skp}, which constructs
bijections between the set of $k$-skeletal paths and the set of
$k'$-skeletal paths for all $k,k'$ between $0$ and $n-1$.  Because the
$(n-1)$-skeletal paths are the same as Dyck paths, we see that the
number of $k$-skeletal paths is given by the $n$\nobreakdash-th
Catalan number. These $k$-skeletal paths provide combinatorial
interpretations of the Catalan numbers that we believe to be new (see
Stanley's compilations of interpretations~\cite[Ex. 6.19]{ECII}
and~\cite{addendum}).  In fact, Theorem~\ref{thm:main-skp} and our
subsequent results apply in a much more general setting.  In lattice
path enumeration theory, one may study classical Dyck paths (paths in
a triangle with boundary $x=y$) or rational-slope Dyck paths (paths in
a triangle with boundary $x=(a/b)y$) or trapezoidal paths (paths in
the trapezoid bounded by $y=0$, $y=n$, $x=0$, and $x=my+c$).  Some of
the recent literature in this area includes~\cite{loehr-trap,
  armstrong-ratcat,armstrong2013rational,gorsky-mazin-ii}.
Section~\ref{sec:skel-paths} defines our general notion of
$k$-skeletal paths based on an additive subgroup $\mcG$ of $\R$ and
parameters $c,m\in\mcG$ and $n\in\Z_{>0}$.  Informally, these
$k$-skeletal paths generalize paths in a trapezoid with height $n$,
base $c$, and diagonal of slope $1/m$, where all east steps in the
paths have lengths in $\mcG$.  Classical lattice paths arise as the
special case where $\mcG=\Z$ and $m,c$ are integers.  For that special
case, Corollary~\ref{cor:main-enum} states that the number of
$k$-skeletal paths (for each $k$ between $0$ and $n-1$) is given by an
$m$-ballot number. Section~\ref{sec:first-return} uses the well-known
``first-return recursion'' for $m$-Dyck paths to reprove this
enumerative result in the special case $c=1$.

In Section~\ref{sec:skel-fns}, we extend the framework of
Section~\ref{sec:skel-paths} to consider $k$-skeletal
functions, which can be viewed as lattice paths with north steps
labeled according to certain rules. Our $k$-skeletal labeled
lattice paths generalize various kinds of parking functions 
(determined by the parameters $n,m,c,\mcG$). For fixed choices
of these parameters, the number of $k$-skeletal labeled paths
is independent of $k$ and equals the corresponding parking function count.
See Theorem~\ref{thm:main-skf} for the precise statement.

While Sections~\ref{sec:skel-paths} and~\ref{sec:skel-fns} focus on
the enumerative aspects of $k$-skeletal objects, the next two
sections explore connections in two different directions. 
Section~\ref{sec:skel-chip} views the results of
Sections~\ref{sec:skel-paths} and~\ref{sec:skel-fns} through the lens
of chip firing on graphs. In fact, this subject furnished the initial
motivation for our combinatorial definitions of $k$-skeletal objects.
We are led naturally from classical chip firing to the notion of
\emph{$\mcG$-valued chip firing}, where $\mcG$ is any additive subgroup of
$\R$. Section~\ref{sec:t-analog} considers $t$-analogues of $k$-skeletal
objects using statistics with close connections to various $\dinv$
statistics~\cite{hag-book} appearing in the theory of $q,t$-Catalan polynomials.
This raises the possibility that our $k$-skeletal objects may yield
useful new insights on $q,t$-Catalan polynomials, diagonal harmonics modules,
and related constructions in combinatorial representation theory.

The remainder of this introduction consists of two independent subsections
giving more detailed background from chip firing to motivate our combinatorial
results.  Section~\ref{subsec:kskel} provides an introductory
account of the general role chip firing plays in a story that, in
Sections~\ref{sec:skel-paths} and~\ref{sec:skel-fns}, is told primarily
through the language of lattice paths. Section~\ref{subsec:motivation}
dives deeper into the algebraic and topological developments in chip-firing 
theory that led us to the definitions of $k$-skeletal paths and functions
given here. The rest of this paper is mostly independent
of the next two subsections, which nevertheless provide context for
understanding the connections between our results and other areas.

\subsection{Motivation from Classical Chip Firing}
\label{subsec:kskel}

The chip-firing game \cite{bjorner-lovasz-shor, biggs}
(see~\cite{corry2018divisors,klivans} for a general introduction, and
additional references below) is a dynamical process that can be
played on any loopless graph such as the complete graph $K_{1+n}$ on
the vertex set $\{0,1,2,\ldots,n\}$.  In the game, we start with a
\emph{chip configuration} (or \emph{divisor}) $\ch =
(\ch(1),\ldots,\ch(n))^T$, encoded as a column vector and interpreted
as the placement of $\ch(i)\geq 0$ chips on vertex $i$ for each $i$
between $1$ and $n$. A vertex $v \geq 1$ \emph{fires} by moving one
chip from $v$ to each of its $n$ neighbors.  Vertex $0$ is
distinguished as the \emph{sink} and is not allowed to fire.  We
typically focus on the \emph{legal firings} --- those for which no
non-sink vertex ends up with a negative number of chips after the
firing action. Configurations without any legal firings are
\emph{stable}.  Several legal firings are illustrated in
Figure~\ref{fig:firing}.

Because every firing move sends one chip to the sink, any sequence of
legal firings must eventually terminate. The terminal chip
configurations resulting from initial configurations in which every
vertex can fire are called \emph{critical configurations} (also called
\emph{recurrent configurations}). Critical configurations play an
important role in chip-firing theory, as outlined in
Section~\ref{subsec:motivation}.
\begin{figure}[t]
\begin{center}
  \includegraphics[width=1.0\linewidth]{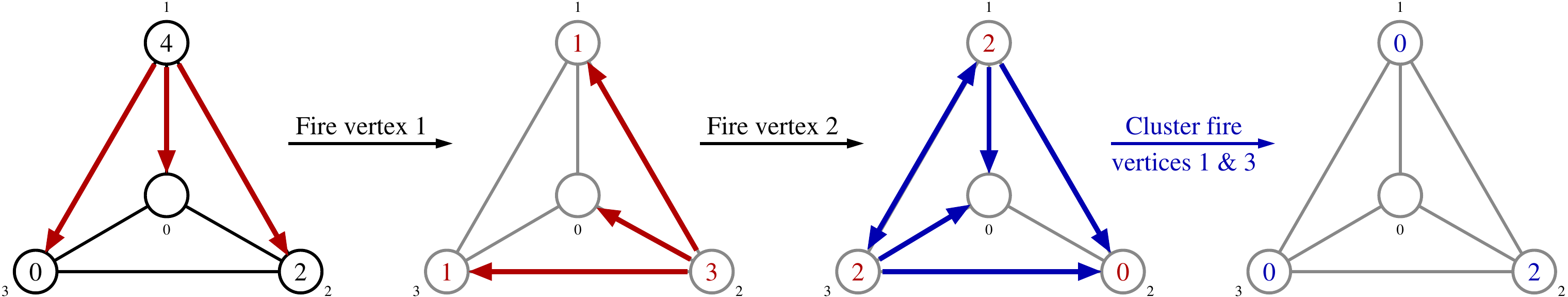}
  \caption{The leftmost graph illustrates $K_{1+3}$ with a chip
    distribution of $\ch(1)=4$, $\ch(2)=2$ and $\ch(3)=0$. Vertices
    $1$ and $2$ are fired in sequence. The resulting chip
    configuration is stable, but vertices $1$ and $3$ can be fired as
    a cluster to obtain the fourth distribution, which is
    superstable.}
\label{fig:firing}
\end{center}
\end{figure}
For $n=3$, there are $(3+1)^{3-1}=16$ different
critical configurations. While we could list all sixteen as
vertex-weighted graphs, it is convenient to introduce a
method of associating an unlabeled lattice path with any chip
configuration. Associate with the configuration $\ch$ the path
$\pi(\ch)$ that has $j$ unit-length north steps in column $i$ when $j$
non-sink vertices have exactly $i$ chips on them. These north steps
are arranged vertically so that we obtain a path from $(0,0)$ to
$(n,n)$ by connecting the runs of north steps in each column by
unit-length east steps as necessary. The sixteen critical
configurations for $K_{1+3}$ give rise to the five unlabeled lattice
paths shown in Figure~\ref{fig:paths}(a). This mapping from
arbitrary chip configurations to unlabeled paths is many-to-one, but
we can modify it by adding labels to get a bijection between chip
configurations and labeled lattice paths. To obtain a labeled lattice
path, we label the north steps along each line $x=i$ with the elements
of $\ch^{-1}(\{i\})$, sorted into increasing order from bottom to
top. Figure~\ref{fig:paths}(b) illustrates a collection of three
configurations sharing the same underlying unlabeled path.

\begin{figure}[h]
\begin{center}
  \includegraphics[width=0.9\linewidth]{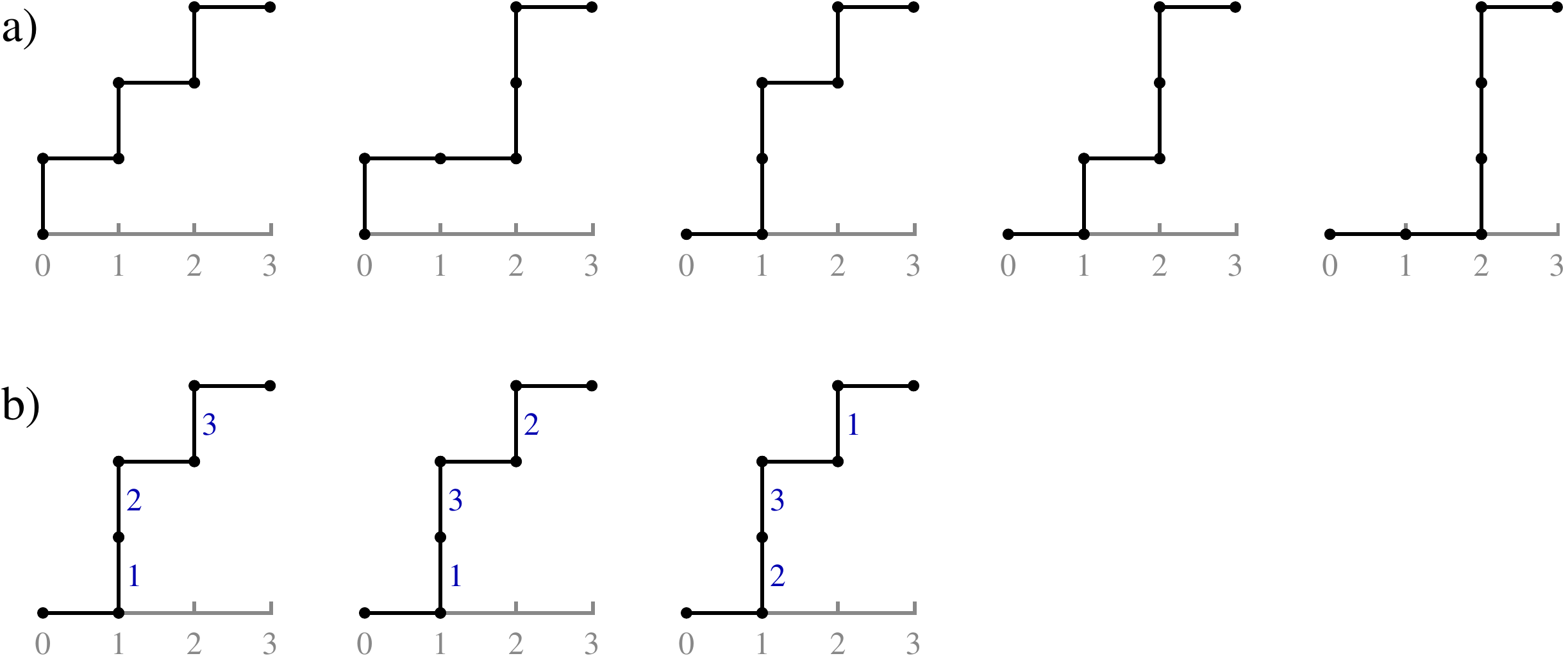}
  \caption{(a) The five unlabeled lattice paths representing the sixteen
    critical configurations for $n=3$. In (b) we show the three labeled
    lattice paths sharing the same unlabeled lattice path
    corresponding to all configurations in which one non-sink vertex
    has two chips and the other two have one chip each.}
\label{fig:paths}
\end{center}
\end{figure}

What we have described so far --- with only one vertex firing at a
time --- is the \emph{abelian sandpile model}~\cite{bak,Dhar}.  More
generally, we can \emph{cluster fire} a subset $S\subseteq
\{1,2,\ldots,n\}$ by firing all of the vertices in $S$ simultaneously.
In the \emph{unconstrained firing model}, any nonempty subset of
vertices is allowed to cluster fire; the cluster firing is
\emph{legal} if no vertex ends up with a negative number of chips. A
\emph{superstable} configuration is one in which no nonempty subset of
the non-sink vertices can legally cluster fire. The third
configuration in Figure~\ref{fig:firing} is stable but not
superstable, as the two vertices $1$ and $3$ can legally be fired
simultaneously. For $n=3$, there are sixteen superstable
configurations, which are represented by the unlabeled paths shown in
Figure~\ref{fig:superstable}.

\begin{figure}[h]
\begin{center}
  \includegraphics[width=0.9\linewidth]{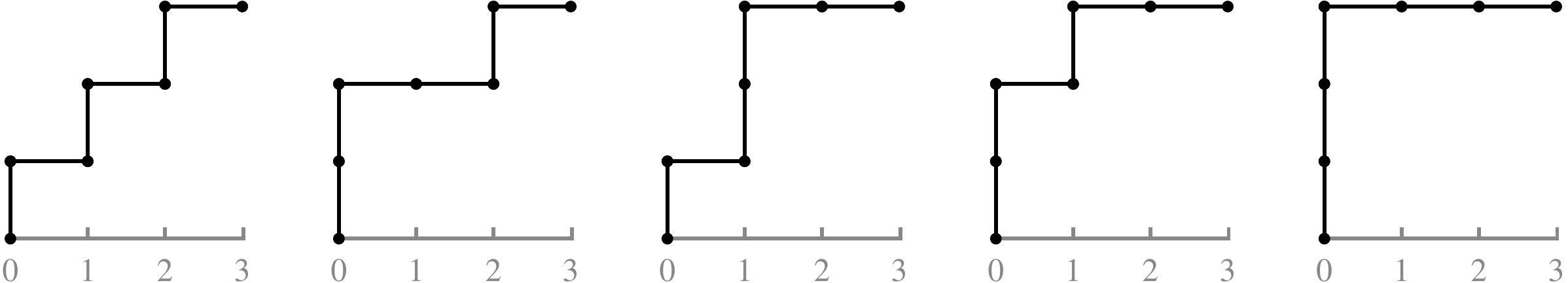}
  \caption{The five unlabeled paths corresponding to the sixteen superstable 
configurations for $n=3$.}
\label{fig:superstable}
\end{center}
\end{figure}

There are two aspects of the superstable configurations we wish to
highlight. First, there is a duality between the critical and
superstable configurations, as suggested by the fact that there are
sixteen configurations of each type for $n=3$. More precisely, let
$\chmax$ denote the configuration in which each non-sink vertex of 
$K_{1+n}$ has $n-1$ chips. This is the maximal chip configuration that is
stable. It turns out that $\ch$ is critical if and only if
$\chmax-\ch$ is superstable. We refer the reader
to~\cite[Thm. 2.6.19]{klivans} for a proof. The result in the particular
case $n=3$ can be seen by examining Figures~\ref{fig:paths}(a)
and~\ref{fig:superstable}. Also see Remark~\ref{rmk:rotate} 
in Section~\ref{sec:skel-paths}.

The second aspect worth highlighting is that superstable
configurations are parking functions. Under our labeling conventions,
a function $f:\{1,2,\ldots,n\}\rightarrow \{0,1,\ldots,n-1\}$ is a 
\emph{parking function} if for all $i$ between $0$ and $n-1$,
the number of $j$ with $f(j)\geq i$ is at most $n-i$
(see~\cite[Chap. 5]{hag-book} and Section~\ref{sec:skel-fns}). 
If we associate a labeled path with each
parking function, as we did above for chip configurations, one can
check that the conditions on the values of a parking function
translate into the underlying unlabeled path being a Dyck path.
In fact, this is just a rephrasing of the conditions for a
configuration to be superstable: no subset $S$ of $i$ non-sink
vertices, where $1\leq i\leq n$, has at least $n-i+1$ chips on each
vertex. Parking functions play an important role in combinatorics and
representation theory. (For instance, the Frobenius series for the
diagonal coinvariants can be expressed as a weighted sum indexed
by parking functions~\cite{carlsson,hhlru}.)
As such, any generalization of superstable
configurations has the potential to illuminate questions in
combinatorial representation theory.

Looking back at Conditions~(K1) and~(K2) defining the $k$-skeletal
paths, we may interpret the two extreme cases using chip firing. On one hand,
the $0$-skeletal paths are the paths corresponding to critical configurations, 
arising naturally from the abelian sandpile model. 
On the other hand, the $(n-1)$-skeletal paths are the Dyck paths
corresponding to the superstable configurations, arising naturally
from the unconstrained firing model. Figure~\ref{fig:ex3} illustrates
all $k$-skeletal paths when $n=3$ and $k$ is $0$, $1$, or $2$.
Figure~\ref{fig:rotation} illustrates the duality between $0$-skeletal
paths and $(n-1)$-skeletal paths for a more general choice of parameters.

Conditions (P0)--(P2) of Section~\ref{subsec:path-def} define
$k$-skeletal paths in our more general setup involving parameters
$m,c$ chosen from an additive subgroup $\mcG$ of $\R$.
These conditions provide a way to define
$k$-skeletal paths associated with general trapezoidal or triangular
regions, as specified by the parameters $n$, $m$, and $c$.
Section~\ref{sec:skel-chip} shows how to reinterpret the defining conditions
in the language of chip firing. We are led to a natural
generalization to chip firing on a complete graph $K_{1+n}$ in which edges 
between non-sink edges are weighted by $m$, edges touching the sink are
weighted by $c$, and the chip count at each vertex belongs to $\mcG$
rather than $\Z$.

The enumerative results in Theorems~\ref{thm:main-skp} and~\ref{thm:main-skf},
along with the chip-firing interpretation just mentioned, make critical use
of the assumption that chip counts and $x$-coordinates of north steps
come from the additive subgroup $\mcG$. These results fail if we try to
restrict to ``integral'' objects (see Example~\ref{ex:dep}). When
$\mcG=\Z$ and $c,m\in\Z$, we obtain finite collections of $k$-skeletal
paths and functions that are counted (respectively) by $m$-ballot numbers
and generalized parking function counts. However, when $\mcG$
is any non-cyclic subgroup of $\R$, we obtain infinite collections
of $k$-skeletal objects. For coprime positive integers $a,b$, the
\emph{rational Catalan number} $\frac{1}{a+b}\binom{a+b}{a,b}$ counts
rational-slope Dyck paths contained in a triangle bounded by $x=(a/b)y$.
It is natural to ask if there is some alternate version of our $k$-skeletal
constructions that leads to new collections of paths that are counted
(for all $k$) by the rational Catalan number. We address this problem
in a forthcoming paper~\cite{bwl-forthcoming}, 
which requires yet another novel variation of the classical chip-firing model.

\subsection{Motivation from Algebraic and Geometric Aspects of 
Chip Firing} 
\label{subsec:motivation}

The simple definition of chip firing on graphs belies the richness of
this theory and its myriad connections with other areas of
mathematics.  These areas include statistical physics~\cite{bak,
  Dhar}; arithmetic
geometry~\cite{raynaud1970specialisation,Lorenzini89,Lorenzini91};
poset theory~\cite{pretzel1986reorienting, mosesian1972strongly,
  propp2002lattice}; lattice
theory~\cite{amini2010riemann,bacher1997lattice,biggs}; tropical
geometry~\cite{baker-norine,mikhalkin-zharkov}; algebraic
combinatorics~\cite{cori2016hall,Sandpile2024} and commutative
algebra~\cite{rossin, postnikov-shapiro, perkinson}. In
this paper we focus on leveraging connections to the combinatorics of
trapezoidal lattice paths, both unlabeled and labeled.

The chip-firing process is not limited to complete graphs.
We may start with any connected, undirected graph $G =(V,E)$ with
vertex set $V=\{v_1,\ldots,v_N\}$ and edge set
$E=\{e_1,\ldots,e_M\}$. We assume $G$ has no loops, but $G$ may have
multiple edges between two vertices. We choose a
distinguished \emph{sink} vertex $q \in V$ and set
$V'=V\setminus\{q\}$. As before, a \emph{chip configuration} $\ch$ is an
assignment of an integer number of chips to each vertex of $G$.  The
\emph{degree} of a chip configuration $\ch$ is $\sum_{v\in
  V}\ch(v)$. Given a chip configuration $\ch$ of known degree $d$, we
have $\ch(q)=d-\sum_{v\in V'} \ch(v)$. So when focusing on chip
configurations of a fixed degree, we may safely ignore the chip count
$\ch(q)$ at the sink.

The action of firing a vertex or a set of vertices can be expressed in
terms of the \emph{Laplacian of $G$}, the $N\times N$ matrix $\LL$
with entries
\begin{equation*}
\LL_{i,j}=\begin{cases}
          \,\,\text{degree of $v_i$}, \quad & \text{if}\,\, i=j;  \\
          \,\, -\text{(number of edges linking $v_i$ and $v_j$)},
     \quad & \text{if}\,\,  i\neq j. \end{cases}
\end{equation*}
For $S\subseteq V$, let $\ee_S$ be the column vector with $1$s in
those positions $i$ for which $v_i\in S$ and $0$s elsewhere. The
configuration $\ch'=\ch-\LL\ee_S$ is defined as the configuration
obtained from $\ch$ by \emph{cluster firing} the set $S$.

In the study of chip firing, we are often interested in certain
distinguished chip configurations. As sketched in
Section~\ref{subsec:kskel} for the sandpile model on $G=K_{1+n}$, 
these distinguished configurations include the \emph{critical} (also called
\emph{recurrent}) configurations and the
\emph{superstable} configurations. In terms of the Laplacian, the
superstable chip configurations $\ch$ are those satisfying:
\begin{enumerate}
    \item $\ch(v)\geq 0$ for all $v\in V'$; and 
    \item for every $\emptyset \neq S \subseteq V'$, 
      there exists $v \in V'$ such that $(\ch-\LL\ee_S)(v)<0$.
\end{enumerate}
The first condition means that no non-sink
vertex has a negative number of chips. The second condition means
that there do not exist any non-trivial legal cluster firings. These
superstable configurations are equivalent to the \emph{$G$-parking
functions} of Postnikov \& Shapiro~\cite{postnikov-shapiro} and the
\emph{$q$-reduced divisors} of Baker \& Norine~\cite{baker-norine}. The
duality with critical configurations is closely related to
Riemann--Roch duality for graphs~\cite{baker-norine} and Alexander duality for monomial
ideals~\cite{manjunath2013monomials}.

Parking functions of order $n$ are in bijection with trees on $n+1$
(labeled) vertices~\cite{kreweras}. It is a classical result
that the number of parking functions of order $n$ is $(n+1)^{n-1}$
(due to Cayley~\cite{cayley} in the context of labeled trees; see
also~\cite{konheim,riordan}).

\begin{prop}[see \cite{gaydarov2016parking, kostic2008multiparking, postnikov-shapiro}]
  If $G$ is the complete graph on $n+1$ vertices, then the set of
  $G$-parking functions (and hence the set of superstable
  configurations and the set of $q$-reduced divisors of degree $0$) can be naturally
  identified with the set of parking functions of order $n$.
\end{prop}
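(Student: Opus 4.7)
The plan is to compute the effect of cluster firing on the complete graph $K_{1+n}$ directly from the Laplacian, and then show that the two superstability conditions translate term for term into the classical parking-function inequalities. Throughout I take the sink to be $q=0$, write $V'=\{1,2,\ldots,n\}$, and use the equivalences recalled just before the statement (superstable configurations $=$ $G$-parking functions $=$ $q$-reduced divisors of degree $0$) so that it suffices to handle the superstable case.

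First I would record the effect of firing. The Laplacian of $K_{1+n}$ has diagonal entries $n$ and off-diagonal entries $-1$, so for a nonempty $S\subseteq V'$ the vector $\LL\ee_S$ has entry $n-|S|+1$ at each $v\in S$ and entry $-|S|$ at each $v\in V'\setminus S$. Consequently cluster firing $S$ decreases $\ch(v)$ by $n-|S|+1$ when $v\in S$ and increases $\ch(v)$ by $|S|$ when $v\in V'\setminus S$. Legality of the firing therefore involves only the vertices of $S$, and the firing of $S$ is legal if and only if
\[
 \ch(v)\geq n-|S|+1 \quad\text{for every } v\in S.
\]

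Next I would translate the two superstability conditions into inequalities on $\ch$. Condition (1) gives $\ch(v)\geq 0$. Applying condition (2) to singletons $S=\{v\}$ forces $\ch(v)\leq n-1$, so any superstable $\ch$ restricts to a function $\{1,\ldots,n\}\to\{0,1,\ldots,n-1\}$. For $|S|=k$ with $2\leq k\leq n$, condition (2) says that no $k$-subset of $V'$ has $\ch$-value $\geq n-k+1$ throughout, i.e.\
\[
 |\{j\in V':\ch(j)\geq n-k+1\}|\leq k-1.
\]
Substituting $i=n-k+1$ yields $|\{j:\ch(j)\geq i\}|\leq n-i$ for each $1\leq i\leq n-1$, which is exactly the classical parking-function inequality (the case $i=0$ is vacuous). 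Conversely, if $f:\{1,\ldots,n\}\to\{0,\ldots,n-1\}$ is a parking function and some nonempty $S\subseteq V'$ could fire, then taking $i=n-|S|+1$ produces at least $|S|=n-i+1$ indices $j$ with $f(j)\geq i$, violating the parking condition.

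With these two directions in hand, the natural identification is simply $\ch\mapsto f$ with $f(j)=\ch(j)$; the equivalence of the defining conditions established above shows this map is a bijection between superstable configurations on $K_{1+n}$ and parking functions of order $n$, and passing through the equivalences from the preceding paragraph yields the statement for $G$-parking functions and for degree-$0$ $q$-reduced divisors. There is no serious obstacle here; the only step requiring care is keeping the index correspondence $|S|=k\leftrightarrow i=n-k+1$ straight, together with segregating the singleton case $|S|=1$ (which fixes the codomain $\{0,\ldots,n-1\}$) from the larger-$|S|$ cases (which encode the parking inequalities themselves).
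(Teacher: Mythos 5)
Your proof is correct and follows essentially the same route the paper takes: the paper cites the literature and only sketches the argument in Section~1.1 (``this is just a rephrasing of the conditions for a configuration to be superstable: no subset $S$ of $i$ non-sink vertices has at least $n-i+1$ chips on each vertex''), and your Laplacian computation together with the index substitution $i=n-|S|+1$ is exactly that rephrasing carried out in full. The bookkeeping — legality depending only on vertices of $S$ once $\ch\geq 0$, singletons fixing the codomain, and larger $S$ giving the parking inequalities — is all handled correctly.
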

This connection holds more generally. The number of
$G$-parking functions is counted by the number of spanning
trees of $G$~\cite{postnikov-shapiro}. To explain this connection, 
it is helpful to describe how the critical configurations arise
algebraically.

We say that chip configurations $\ch$ and $\ch^*$ are \emph{chip-firing
equivalent}, written $\ch \sim \ch^*$, if we can obtain $\ch^*$ from $\ch$ by
a finite sequence of chip-firing moves.  We have $\ch\sim \ch^*$ if and
only if there is an integer vector $\ww\in\Z^N$ with $\ch-\ch^*=\LL\ww$.
Here we are not concerned with whether vertices have
negative numbers of chips, so we do not need to distinguish between
the cluster fire of a set $S$ and the sequential firing of all
vertices in $S$; we can stay within the abelian sandpile model.

The set of chip configurations on $G$ is an additive group isomorphic
to $\Z^N$ if we add chip configurations pointwise.
The \emph{chip-firing equivalence class} of $\ch$ is 
$[\ch]=\{\ch^*: \ch^*\sim \ch\}$. Equivalent configurations have the same
degree, so it makes sense to define $\deg([\ch])=\deg(\ch)$.
The collection of chip-firing equivalence classes forms an additive group
$\Pic(G)=\Z^N/\LL(\Z^N)$ called the \emph{Picard group}. 
Let $\Pic^d(G)$ be the subset of $\Pic(G)$ consisting of
classes $[\ch]$ where $\deg(\ch)=d$. The set $\Pic^0(G)$
is a subgroup of $\Pic(G)$, called the \emph{critical group} (this
group goes by different names depending on the context in which
it is introduced: the \emph{chip-firing group}, the \emph{Jacobian},
and the \emph{sandpile group}).

For a connected graph $G$, there is a natural isomorphism $\Pic(G)
\cong \Z\oplus \Pic^0(G)$.  So $\Pic^0(G)$ can be identified with the
torsion part of the $\Z$-cokernel of $\LL$.  The critical
configurations central to this paper can be taken to be
representatives of elements of $\Pic^0(G)$.  We can take the number of
chips on the sink to be the negative of the total number on the
non-sink vertices, leading to a total degree of zero.  As $G$ is
connected, it can be shown by elementary group theory that for any
$i$, $\left|\Pic^0(G)\right| = \left|\det(\LL_i)\right|$, where
$\LL_i$ is the matrix obtained by deleting $i$\nobreakdash-th row and
column of $\LL$.  On the other hand, Kirchhoff's Matrix-Tree
Theorem~\cite[\S3.18]{loehr-comb} states that
$\left|\det(\LL_i)\right|$ is the number of spanning trees of $G$,
hence $\left|\Pic^0(G)\right|$ is equal to the number of spanning
trees of a graph (see~\cite{baker2013chip}).  Several explicit
bijections between $\Pic^0(G)$ and the set of spanning trees of $G$
are known~(see~\cite{an2014canonical, Backman-bij, benson,
  cori2003sand, Dhar, kostic2008multiparking, perkinson2017g, yuen}).

We have outlined some of the close connections among critical
configurations, $G$-parking functions, $\Pic^0(G)$ and spanning trees
of $G$. Statistical physicists Caracciolo, Paoletti, and
Sportiello~\cite{caracciolo2012multiple} and the first
author~\cite{Backman-bij} independently discovered a generalization of
$G$-parking functions with respect to an abstract simplicial
complex.\footnote{The first author thanks Lionel Levine for first
observing the connection between these works.} 
Recall that a simplicial complex $\Delta$ on $V'$ is a collection of
nonempty subsets of $V'$ such that $\{v\}\in\Delta$ for all $v\in V'$,
and whenever $\emptyset\neq T\subseteq S\in\Delta$, $T$ also belongs
to $\Delta$. We allow the vertices of any subset in $\Delta$ to fire
simultaneously. Regarding the objects introduced
in~\cite{Backman-bij}, a chip configuration $\ch:V\rightarrow\Z$ will
be termed \emph{$\Delta$-critical} if these three conditions hold:
\begin{enumerate}
\item \label{itm:oneG} For each $v\in V', \ch(v)\geq 0$.
\item \label{itm:twoG} For each $S \in \Delta$, there exists $v\in S$
  with $(\ch-\LL\ee_S)(v)<0$.
\item For each nonempty $S\subseteq V'$, the configuration
  $\ch+\LL\ee_S$ does not satisfy both Conditions~(\ref{itm:oneG})
  and~(\ref{itm:twoG}).
\end{enumerate}
In the case where the complex $\Delta$ consists of all one-element
subsets of $V'$, we recover the sandpile model; the
$\Delta$-critical configurations are the critical 
(recurrent) configurations. At the other extreme, when $\Delta$ is
the full complex consisting of all nonempty subsets of $V'$,
we recover the unconstrained chip-firing model; the
$\Delta$-critical configurations are the superstable
configurations.

The first author was motivated to consider such simplicial complexes
in the context of chip firing by a desire to develop a notion of
divisor theory for tropical curves with respect to an open cover as
well as a discrete version for graphs. For a fixed graph $G$, sink $q$,
and simplicial complex $\Delta$, the $\Delta$-critical configurations
interpolate between the recurrent configurations and 
the superstable configurations. All such interpolations are
equinumerous with the set of spanning trees in the graph.

\begin{theorem}[Theorem 1 \& Lemma 2 \cite{Backman-bij}, Section 3 \cite{caracciolo2012multiple}] 
\label{thm:delta-critical}
  For each fixed sink vertex $q$ and simplicial complex $\Delta$ on $V'$,
  every chip configuration $\ch$ is equivalent to a unique $\Delta$-critical 
  configuration.
\end{theorem}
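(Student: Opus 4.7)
The plan is to establish existence and uniqueness of the $\Delta$-critical representative in each chip-firing equivalence class separately. For existence, start with any $\ch^{(0)}\sim\ch$ that is nonnegative on $V'$ (obtained by firing the sink enough times to push chips everywhere), and run a \emph{reduction phase}: repeatedly perform legal $\Delta$-cluster firings $\ch\mapsto\ch-\LL\ee_S$ with $S\in\Delta$ until none remain. This phase terminates by the standard abelian/confluence argument for chip firing extended to cluster firings, producing some $\ch^{(1)}$ satisfying Conditions~(\ref{itm:oneG}) and~(\ref{itm:twoG}). Since the singleton axiom $\{v\}\in\Delta$ combined with~(\ref{itm:twoG}) forces $0\le\ch(v)<\deg(v)$ at each such configuration, the set of representatives of $[\ch]$ satisfying~(\ref{itm:oneG}) and~(\ref{itm:twoG}) lies in a finite box and is itself finite. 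A $\Delta$-critical representative is exactly a maximal element of this finite set under the order $\ch'\preceq\ch''$ meaning $\ch''$ is reachable from $\ch'$ by a sequence of anti-firings $\ch'\mapsto\ch'+\LL\ee_S$, every intermediate stage of which still satisfies~(\ref{itm:oneG}) and~(\ref{itm:twoG}); Condition~(3) is precisely maximality, and a maximum exists by finiteness.

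For uniqueness, suppose $\ch_1$ and $\ch_2$ are both $\Delta$-critical with $\ch_2-\ch_1=\LL\ww$ for some $\ww\in\Z^V$ normalized by $\ww(q)=0$, and assume for contradiction that $\ww\ne 0$. Decompose $\ww=\ww^+-\ww^-$ with $\ww^\pm\ge 0$ having disjoint supports $S^+,S^-\subseteq V'$. The classical least-action principle for chip firing---a consequence of $\LL$ being an $M$-matrix---then yields two monotone interpolations: a sequence of cluster firings from $\ch_2$, each supported in $S^+$, that stays nonnegative on $V'$ and terminates at $\ch_2-\LL\ww^+$; and a dual sequence of anti-firings from $\ch_1$ supported in $S^-$ reaching the same endpoint. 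From the firing direction, the plan is to extract an inclusion-minimal subset $T\in\Delta$ of $S^+$ along which some cluster firing at $\ch_2$ is legal, contradicting~(\ref{itm:twoG}) for $\ch_2$; from the anti-firing direction, to extract $T\in\Delta$ with $\ch_1+\LL\ee_T$ still satisfying~(\ref{itm:oneG}) and~(\ref{itm:twoG}), contradicting~(3) for $\ch_1$. Either way $\ww=0$ and hence $\ch_1=\ch_2$.

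The main obstacle is extracting an inclusion-minimal witness $T$ that lies in the simplicial complex $\Delta$. Classical uniqueness proofs (Dhar's algorithm, Baker--Norine reduction) fire the entire positive support $S^+$ freely, but here legality is tested only on sets in $\Delta$. Bridging this gap will require exploiting that $\Delta$ is downward closed and contains every singleton, allowing one to peel off a minimal legal sub-firing within any unrestricted legal firing and check that it still belongs to $\Delta$. Formalizing this peeling step---ensuring that minimality interacts correctly with both legality and $\Delta$-membership---is the technical heart of the argument and where I expect the most care to be needed, paralleling the inductive analyses in~\cite{Backman-bij} and~\cite{caracciolo2012multiple}.
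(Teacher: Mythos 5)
First, a point of reference: the paper does not prove Theorem~\ref{thm:delta-critical} at all — it is imported from the cited works — so your proposal can only be judged on its own merits. Your existence half is essentially viable: cluster-firing stabilization terminates by the usual argument (otherwise some neighbor of the sink fires infinitely often and the total chip count on $V'$ drops without bound), the singleton case of Condition~(2) confines all representatives satisfying (1) and (2) to a finite box, the anti-firing reachability relation is acyclic because $\ker\LL$ is spanned by the all-ones vector while a nonzero sum of indicator vectors of nonempty subsets of $V'$ vanishes at $q$, and a \emph{maximal} (not maximum) element above your stabilized representative is exactly a configuration satisfying (3). These details are routine, though you should at least record the acyclicity check.

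The uniqueness half contains a genuine gap, and the repair you flag as ``the technical heart'' rests on a false premise. You propose to take the legal cluster firing at $\ch_2$ produced by the least-action principle and peel off an inclusion-minimal legal sub-firing, then ``check that it still belongs to $\Delta$.'' It need not. Take $\Delta$ to consist of the singletons only, $G$ the path $q$--$1$--$2$, and $\ch_2=(\ch(1),\ch(2))=(1,0)$. This configuration is recurrent, hence $\Delta$-critical, yet the set $\{1,2\}$ fires legally from it (each vertex ends with $0$ chips) while neither singleton does; the unique minimal legally firable set is not in $\Delta$. So no contradiction with Condition~(2) can be extracted from the firing direction alone — if it could, every $\Delta$-stable configuration would be superstable, collapsing the recurrent and superstable configurations, which is false already for the sandpile model. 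The entire burden therefore falls on Condition~(3): one must produce a nonempty $T\subseteq V'$ (membership in $\Delta$ is \emph{not} required there, contrary to your phrasing) such that $\ch_1+\LL\ee_T$ satisfies both (1) and (2), and it is precisely the verification that the anti-fired configuration still satisfies (2) — i.e., remains $\Delta$-stable — that your sketch leaves blank; neither the max-level set nor the min-level set of $\ww$ delivers this automatically. A correct argument has to interleave Conditions~(2) and~(3), for instance by an induction peeling off level sets of $\ww$ and showing at each stage that either some set of $\Delta$ fires legally or a legal anti-firing preserving $\Delta$-stability exists, rather than closing each direction with a one-shot contradiction as you propose.
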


\begin{corollary}
  The number of $\Delta$-critical configurations is the number of 
  spanning trees of $G$.
\end{corollary}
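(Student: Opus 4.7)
The plan is to combine Theorem~\ref{thm:delta-critical} with the standard identification of chip-firing equivalence classes with the Picard group, and then apply Kirchhoff's Matrix-Tree Theorem, all of which are recalled earlier in Section~\ref{subsec:motivation}. First, Theorem~\ref{thm:delta-critical} states that every chip configuration is equivalent to a unique $\Delta$-critical configuration. Since each column of the Laplacian $\LL$ sums to $0$, chip-firing moves preserve total degree, and since the defining conditions for $\Delta$-critical configurations only constrain values on the non-sink vertices $V'$, I would normalize each equivalence class in the usual way by setting $\ch(q)=-\sum_{v\in V'}\ch(v)$, as indicated in the discussion preceding the theorem.

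Next, I would show that the assignment sending an equivalence class $[\ch]\in\Pic^0(G)$ to the unique $\Delta$-critical representative of $[\ch]$ is a bijection from $\Pic^0(G)$ onto the set of $\Delta$-critical configurations. Surjectivity is automatic, since every $\Delta$-critical configuration represents some degree-zero equivalence class under the normalization above; injectivity is the uniqueness half of Theorem~\ref{thm:delta-critical}.

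Finally, I would invoke the elementary fact (recalled just above) that for a connected graph $G$ one has $|\Pic^0(G)| = |\det(\LL_i)|$, where $\LL_i$ is any principal minor of $\LL$ obtained by deleting a single row and column. By Kirchhoff's Matrix-Tree Theorem, $|\det(\LL_i)|$ equals the number of spanning trees of $G$. Chaining these equalities yields the corollary.

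The only step requiring any care is the normalization bookkeeping: one must verify that chip firing preserves degree, and that imposing a single linear constraint on the sink coordinate turns the \emph{a priori} infinite collection of $\Delta$-critical configurations (as described in Theorem~\ref{thm:delta-critical}, where $\Pic(G)\cong\Z\oplus\Pic^0(G)$) into a finite collection indexed by $\Pic^0(G)$. After that, the argument is a short assembly of results already present in the excerpt, so I do not anticipate a substantial obstacle.
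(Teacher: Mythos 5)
Your proposal is correct and follows exactly the route the paper intends: the paper states this corollary without a written proof, but the surrounding discussion in Section~\ref{subsec:motivation} supplies precisely the ingredients you assemble (unique $\Delta$-critical representatives per class from Theorem~\ref{thm:delta-critical}, normalization to degree zero so the classes are indexed by $\Pic^0(G)$, and $|\Pic^0(G)|=|\det(\LL_i)|=$ number of spanning trees via the Matrix-Tree Theorem). No gaps; your care about the degree normalization is exactly the one point worth spelling out.
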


An explicit bijection between $\Delta$-critical configurations 
and spanning trees was provided by the first author in~\cite{Backman-bij}.

The motivating idea for the present work was to specialize this
construction to $G=K_{1+n}$ and take $\Delta$ to be the $k$-skeleton of
the full complex (so $\Delta$ consists of all subsets of $\{1,2,\ldots,n\}$ of
size between $1$ and $k+1$). The advantage of this setup is that $\Delta$ is
invariant under $S_n$, so we are able to investigate not only the
corresponding generalization of classical parking functions, but also
the analogues of Dyck paths (the $S_n$-orbits of the parking
functions), which connect to combinatorial representation theory and
adjacent fields. These objects admit an intrinsic description without
any reference to chip firing. As described in
Theorem~\ref{thm:main-skp} and Theorem~\ref{thm:main-skf}, our
construction extends to generalizations of rational parking functions
and trapezoidal lattice paths that go beyond the framework previously studied 
by the first author. We are hopeful that these objects will find
applications in the study of $q,t$-Catalan combinatorics, where parking
functions and Dyck paths have previously been utilized. 
Section~\ref{sec:t-analog} explains how the $q,t$-Catalan $\dinv$ statistics 
interact nicely with our $k$-skeletal constructions, allowing us to define
$k$-skeletal versions of the $q=1$ specialization of the $q,t$-Catalan
that are independent of $k$.

\section{Skeletal Paths}
\label{sec:skel-paths}

\subsection{Definitions and Main Result}
\label{subsec:path-def}

We first introduce certain paths that generalize lattice paths.
Throughout, $n$ is a fixed positive integer and $\mcG$ is an
additive subgroup of $\R$ (typically $\mcG=\Z$ or $\mcG=\R$).
Formally, we define a \emph{path of height $n$ with values in $\mcG$} 
to be a set $\pi=\{(x_i,i): i=0,1,2,\ldots,n-1\}$ of $n$ points in $\R^2$
such that $x_0\leq x_1\leq \cdots\leq x_{n-1}$ and all $x_i\in\mcG$. Informally,
we make the \emph{picture of the path $\pi$} by drawing $n$ unit-length north 
steps from $(x_i,i)$ to $(x_i,i+1)$ for $0\leq i\leq n-1$, and drawing
east steps connecting $(x_i,i+1)$ to $(x_{i+1},i+1)$ for $0\leq i<n-1$.
The points $(x_i,i)$ are called the \emph{vertices} of the path $\pi$.
The formal definition focuses on these vertices (the starting points
of the north steps) to make later connections to parking functions
and chip-firing configurations more transparent and to avoid ambiguities
involving initial east steps and final east steps.

We say the path $\pi=\{(x_0,0),\ldots,(x_{n-1},n-1)\}$ 
is \emph{nonnegative} if $x_0\geq 0$.
For such a path, we often draw an initial east step from $(0,0)$ 
to $(x_0,0)$.  For sets of paths where there is a known upper bound $M$ for
all $x_i$, we may also draw a final east step from $(x_{n-1},n)$ to $(M,n)$.
When $\mcG=\Z$, nonnegative paths can be identified with classical 
\emph{lattice paths}, which are sequences of unit-length north steps and 
unit-length east steps starting at the origin. We write $P_n=P_n(\mcG)$ 
for the set of all paths of height $n$ with values in $\mcG$.

Fix parameters $c,m\in\mcG$ with $c>0$ and $m\geq 0$.
The \emph{reference line} for these parameters is the line with equation
$x=my+c$.  For each integer $k\in\{0,1,\ldots,n-1\}$, a path
$\pi\in P_n$ is called a \emph{k-skeletal path (for parameters $c$ and $m$)} 
if and only if these conditions hold:
\begin{itemize}
\item[(P0)] $\pi$ is nonnegative.
\item[(P1)] The last $k+1$ north steps of $\pi$ all start strictly left
of the line $x=my+c$.
\item[(P2)] There do not exist $k+1$ consecutive rows
such that the north steps of $\pi$ in those rows
all start strictly left of the line $x=my$.
\end{itemize}
In terms of the vertices $(x_i,i)$ of $\pi$, 
Condition~(P0) requires that $x_0\geq 0$.  
Condition~(P1) requires that $x_i<mi+c$ for all $i$ 
in $\{n-1,n-2,\ldots,n-(k+1)\}$. In the special case $\mcG=\Z$,
we could equivalently require $x_i\leq mi+c-1$, but the general
case requires the strict inequality.
Condition~(P2) requires that for any $i$ in $\{0,1,\ldots,n-(k+1)\}$, 
there must exist $j$ in $\{i,i+1,i+2\ldots,i+k\}$ with $x_j\geq mj$.
Let $\skp_k = \skp_k(\mcG,n;c,m)$ be the set of $k$-skeletal paths of
height $n$ with parameters $c$ and $m$ and values in $\mcG$. The main 
theorem of this section is the following:

\begin{theorem}\label{thm:main-skp}
For all $k,k'\in\{0,1,\ldots,n-1\}$, there is a canonical bijection
from $\skp_k$ to $\skp_{k'}$. 
\end{theorem}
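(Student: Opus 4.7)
The plan is to reduce Theorem~\ref{thm:main-skp} to constructing, for each $k\in\{1,\ldots,n-1\}$, a canonical bijection $\phimnk{m}{n}{k}\colon\skp_k\to\skp_{k-1}$ with explicit inverse $\psimnk{m}{n}{k}$; composing these maps and their inverses then yields a canonical bijection between arbitrary $\skp_k$ and $\skp_{k'}$. The macros $\phimnkgen$ and $\psimnkgen$ already present in the preamble suggest precisely this inductive setup.

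My first step is to describe the symmetric difference $\skp_k\triangle\skp_{k-1}$. As $k$ decreases by one, condition (P1) weakens and condition (P2) strengthens, so (P1)$_k\Rightarrow$(P1)$_{k-1}$ and (P2)$_{k-1}\Rightarrow$(P2)$_k$. Hence $\pi\in\skp_k\setminus\skp_{k-1}$ exactly when $\pi\in\skp_k$ and $\pi$ contains a maximal strict-left run of length exactly $k$, meaning a block of rows $j,\ldots,j+k-1$ with $x_i<mi$ for each $i$, while dually $\pi'\in\skp_{k-1}\setminus\skp_k$ exactly when $\pi'\in\skp_{k-1}$ and $x_{n-k-1}\geq m(n-k-1)+c$, the unique row at which (P1) for $k$ can fail once (P1) for $k-1$ is given. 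I would set $\phimnk{m}{n}{k}$ to be the identity on $\skp_k\cap\skp_{k-1}$, and on $\skp_k\setminus\skp_{k-1}$ define it to locate the \emph{topmost} maximal length-$k$ strict-left run and execute a local move — a cyclic shift or swap among the vertices in rows $j,\ldots,n-1$ — that migrates the obstruction upward, converting the length-$k$ failure of (P2) into the single failure of (P1) at row $n-k-1$. The inverse $\psimnk{m}{n}{k}$ identifies $x_{n-k-1}$ as the unique trigger and reverses the move; because all modified $x$-values are recycled from the original path, membership in $\mcG$ is automatic.

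The main obstacle will be verifying that the local move (i) preserves the weak monotonicity $x_0\leq x_1\leq\cdots\leq x_{n-1}$ and condition (P0); (ii) genuinely destroys the targeted length-$k$ run without creating a new length-$k$ run lower in the path; (iii) produces exactly the failure of (P1)$_k$ at row $n-k-1$ and no other new violation; and (iv) is truly inverted by $\psibare$. Because $\mcG$ may be non-cyclic (so $\skp_k$ can be infinite, as noted near Example~\ref{ex:dep}), this verification must be done structurally rather than via a counting or pigeonhole argument. A useful global consistency check is that the full composition $\psimnk{m}{n}{1}\circ\cdots\circ\psimnk{m}{n}{n-1}\colon\skp_{n-1}\to\skp_0$ ought to be compatible with the duality between $(n-1)$-skeletal and $0$-skeletal paths depicted in Figure~\ref{fig:rotation}, and I would use this to cross-validate the design of the local move before pushing through the detailed case analysis.
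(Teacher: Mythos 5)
Your reduction to adjacent steps $\skp_k\to\skp_{k-1}$ and your description of the two symmetric differences are both correct: $\skp_k\setminus\skp_{k-1}$ is cut out by a maximal length-$k$ strict-left run (a (P2)-failure for $k-1$), and $\skp_{k-1}\setminus\skp_k$ by the single inequality $x_{n-k-1}\geq m(n-k-1)+c$ (a (P1)-failure for $k$). But the proof has a genuine gap: the ``local move'' that is supposed to carry one set onto the other is never defined, and the one structural property you do commit to --- that the modified $x$-values are \emph{recycled} from the original path, so that membership in $\mcG$ is automatic --- is demonstrably impossible. Take $n=3$, $c=m=1$ as in Example~\ref{ex:skp3} and $k=2\to k'=1$. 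Then $\skp_2\setminus\skp_1=\{\N\N\E\N\E\E,\ \N\N\N\E\E\E\}$ with $x$-coordinate multisets $\{0,0,1\}$ and $\{0,0,0\}$, while $\skp_1\setminus\skp_2=\{\E\N\N\E\N\E,\ \E\N\N\N\E\E\}$ with multisets $\{1,1,2\}$ and $\{1,1,1\}$. No map that merely permutes or swaps $x$-values among rows can match these up; any correct bijection must actually translate the coordinates. So the heart of the argument --- the move itself, plus your verification items (i)--(iv) --- is missing, and the design constraint you impose on it rules out every correct choice.

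For comparison, the paper sidesteps the adjacent-step strategy entirely. It encodes a path by its area vector $g_i=mi+c-x_i$ and introduces the cycling operator $C(g_0,\ldots,g_{n-1})=(g_1,\ldots,g_{n-1},g_0-c)$, which in path language moves the bottom rows to the top while shifting their $x$-values by multiples of $m$ and $c$ (this is exactly why $\mcG$-membership survives: $m,c\in\mcG$, not because values are recycled). Theorem~\ref{thm:skp-eqv-class} shows each $C$-equivalence class in $S$ contains exactly one $k$-skeletal vector for every $k$ simultaneously --- the least element of the class whose trailing positive run exceeds $k$ --- and the bijection $\skp_k\to\skp_{k'}$ just matches representatives within a class. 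If you want to salvage your adjacent-step plan, the right ``move'' is an iterate of $C$ (cycling an initial segment of \emph{bottom} rows to the top with the accompanying coordinate shift), not a swap within the rows $j,\ldots,n-1$ containing the offending run; and the case analysis you anticipate is essentially the content of Propositions in Section~\ref{subsec:analyze-conditions}.
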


The proof of Theorem~\ref{thm:main-skp} appears in
Sections~\ref{subsec:area-vec} through~\ref{subsec:analyze-conditions}. 
In Section~\ref{subsec:area-vec} we introduce the notion of an area
vector $g$ associated with a path and rephrase Conditions~(P0),~(P1), and~(P2) 
in terms of area vectors. In Section~\ref{subsec:cycle-area-vec} we introduce
certain equivalence classes of area vectors that, by
Theorem~\ref{thm:skp-eqv-class} of that section, contain exactly one
$k$-skeletal area vector for each $k$ between $0$ and $n-1$.
The bijection of Theorem~\ref{thm:main-skp} thereby arises by mapping any
given $k$-skeletal area vector to the corresponding $k'$-skeletal area
vector in its equivalence class. Thus, the proof of
Theorem~\ref{thm:main-skp} is reduced to that of
Theorem~\ref{thm:skp-eqv-class}, which is carried out in
Sections~\ref{subsec:prelim-lemma} and~\ref{subsec:analyze-conditions}.

Before continuing on to the proof of Theorem~\ref{thm:main-skp}, we
consider its enumerative consequences for the case of $\mcG=\Z$.

\begin{corollary}\label{cor:main-enum}
  When $\mcG=\Z$, the number of $k$-skeletal paths is given by an
  $m$-ballot number: for all $k$ between $0$ and $n-1$, 
  \begin{equation}\label{eq:main-enum}
    |\skp_k|=\frac{c}{(m+1)n+c}\binom{(m+1)n+c}{n}. 
  \end{equation}
\end{corollary}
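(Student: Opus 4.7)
The plan is to use Theorem~\ref{thm:main-skp} to reduce $|\skp_k|$ to a single convenient value of $k$, then identify the resulting set with classical lattice paths in a trapezoid and apply the well-known $m$-ballot formula.

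First I would specialize to $k = n-1$, which is permitted because Theorem~\ref{thm:main-skp} renders $|\skp_k|$ independent of $k \in \{0,1,\ldots,n-1\}$. In this regime Condition~(P1) requires $x_i < mi + c$ for every $i \in \{0, \ldots, n-1\}$, i.e., every north step begins strictly left of the reference line $x = my + c$. Condition~(P2) need only be checked at $i = 0$ (since $n - (k+1) = 0$), where it demands some $j \in \{0,\ldots,n-1\}$ with $x_j \geq mj$; taking $j = 0$ succeeds automatically by~(P0), as $x_0 \geq 0 = m \cdot 0$ (recall $m \geq 0$). Consequently $\skp_{n-1}$ is exactly the set of weakly increasing integer sequences $0 \leq x_0 \leq x_1 \leq \cdots \leq x_{n-1}$ with $x_i \leq mi + c - 1$ for every $i$.

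Second, I would translate these sequences into lattice paths. Prepending an initial east step from $(0,0)$ to $(x_0, 0)$ and appending a final east step from $(x_{n-1}, n)$ to $(mn + c, n)$ (which is legitimate since $x_{n-1} \leq m(n-1) + c - 1 \leq mn + c$), the members of $\skp_{n-1}$ correspond bijectively to lattice paths from $(0,0)$ to $(mn + c, n)$ --- consisting of $n$ north steps and $mn + c$ east steps --- whose $i$-th north step starts strictly left of $x = mi + c$. This is the standard family of trapezoidal lattice paths bounded by $y = 0$, $y = n$, $x = 0$, and $x = my + c$, whose enumeration by the $m$-ballot number $\frac{c}{(m+1)n+c}\binom{(m+1)n+c}{n}$ is classical; it can be established via the cycle lemma of Dvoretzky--Motzkin, via a reflection argument, or via the first-return recursion developed in Section~\ref{sec:first-return}.

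I do not expect a substantive obstacle: all the hard combinatorial work lives in Theorem~\ref{thm:main-skp}, and the only subtlety in the corollary is the observation that~(P2) becomes vacuous at $k = n-1$, which follows immediately from~(P0).
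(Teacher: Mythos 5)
Your proposal is correct and follows essentially the same route as the paper: reduce to $k=n-1$ via Theorem~\ref{thm:main-skp}, observe that (P2) is vacuous there because $x_0\geq 0=m\cdot 0$, identify $\skp_{n-1}$ with trapezoidal lattice paths, and cite the classical $m$-ballot count. The only (immaterial) difference is your choice of endpoint $(mn+c,n)$ versus the paper's $(mn+c-1,n)$ for the augmented path.
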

\begin{proof}
  For $k=n-1$, equation~\eqref{eq:main-enum} is a classical result.
  Condition~(P1) restricts attention to paths whose north steps start
  at points $(x,y)\in\Z^2$ with $x<my+c$, or equivalently $x\leq my+c-1$. 
 Condition~(P2) is
 automatically satisfied since the first north step of any such path
  starts at $(x_0,0)$ with $x_0\geq 0$.  We can identify these paths
  with lattice paths from $(0,0)$ to $(mn+c-1,n)$ contained in the
  trapezoidal region $\mcT=\{(x,y)\in\R^2: 0\leq y\leq n,\ 0\leq x\leq
  my+c-1\}$.  The stated formula for the number of such lattice paths
  can be shown by induction 
(see, for example,~\cite[Theorem~2.27]{loehr-comb}).

  For $k < n-1$, the claimed enumeration follows from the case of
  $k=n-1$ along with the bijection from Theorem~\ref{thm:main-skp}.
\end{proof}

\begin{remark}\label{rmk:rotate}
  When $k=0$, we can interpret $\skp_0$ as counting lattice paths in a
  rotated version of the trapezoid $\mcT$ referenced in the proof
  of Corollary~\ref{cor:main-enum}. By adjusting east steps we obtain
  a lattice path from $(-m,0)$ to $(mn-m+c-1,n)$. Under this
  bijection, $\skp_0$ is the set of lattice paths contained in the
  region $\{(x,y)\in\R^2: 0\leq y\leq n,\ my-m\leq x\leq
  mn-m+c-1\}$. This is $\mcT$ rotated 180 degrees about the point
  $\frac{1}{2}(mn-m+c-1,n)$. Figure~\ref{fig:rotation} gives an example
 where $n=2$, $c=1$, and $m=2$.
\end{remark}

\begin{example}\label{ex:skp3}
Let $\mcG=\Z$, $n=3$, and $c=m=1$. The reference line is $x=y+1$.
Here, we can view skeletal paths as lattice paths from $(0,0)$ to $(3,3)$
with three unit-length north steps and three unit-length east steps. We find
\begin{align*}
  \skp_0&=\{\N\E\N\E\N\E,\E\N\E\N\N\E,\N\E\E\N\N\E,\E\N\N\E\N\E,\E\E\N\N\N\E\},
\\\skp_1&=\{\N\E\N\E\N\E,\N\E\N\N\E\E,\N\N\E\E\N\E,\E\N\N\E\N\E,\E\N\N\N\E\E\},
\\\skp_2&=\{\N\E\N\E\N\E,\N\E\N\N\E\E,\N\N\E\E\N\E,\N\N\E\N\E\E,\N\N\N\E\E\E\},
\end{align*}
as illustrated in Figure~\ref{fig:ex3}. The set $\skp_2$ consists of
Dyck paths in the triangle bounded by $x=0$, $y=3$, and $x=y$. 
Note these paths can touch the line $x=y$ but are strictly left
of the reference line $x=y+1$.
Adjusting east steps in $\skp_0$ to get paths from $(-1,0)$ to
$(2,3)$ as described in Remark~\ref{rmk:rotate}, we get
\[ \{\E\N\E\N\E\N,\E\N\E\E\N\N,\E\E\E\N\N\N,\E\E\N\E\N\N,\E\E\N\N\E\N\},
\] which is the set of rotated Dyck paths in the triangle bounded by
$x=y-1$, $y=0$, and $x=2$.
\end{example}
\begin{figure}[h]
\begin{center}
  \includegraphics[width=0.9\linewidth]{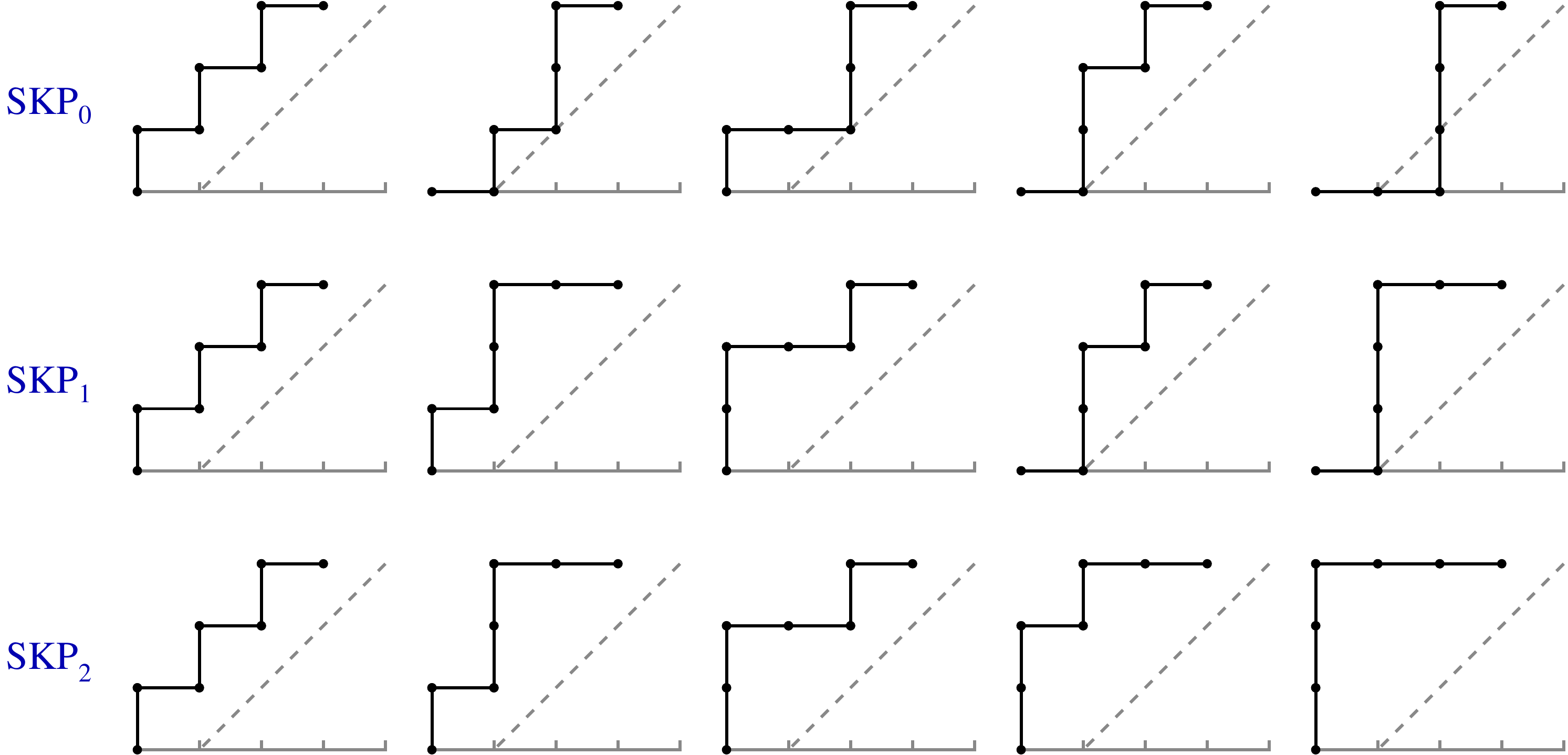}
  \caption{The three collections of paths from Example~\ref{ex:skp3}.}
\label{fig:ex3}
\end{center}
\end{figure}

When $\mcG$ is a non-cyclic subgroup of $\R$, 
the collections of $k$-skeletal paths are infinite,
but Theorem~\ref{thm:main-skp} still applies.
We could obtain finite subcollections by imposing the extra
requirement that all north steps of paths have integer $x$-coordinates.
However, Theorem~\ref{thm:main-skp} no longer holds in this setting,
as seen in the next example.

\begin{figure}[h]
\begin{center}
  \includegraphics[width=0.9\linewidth]{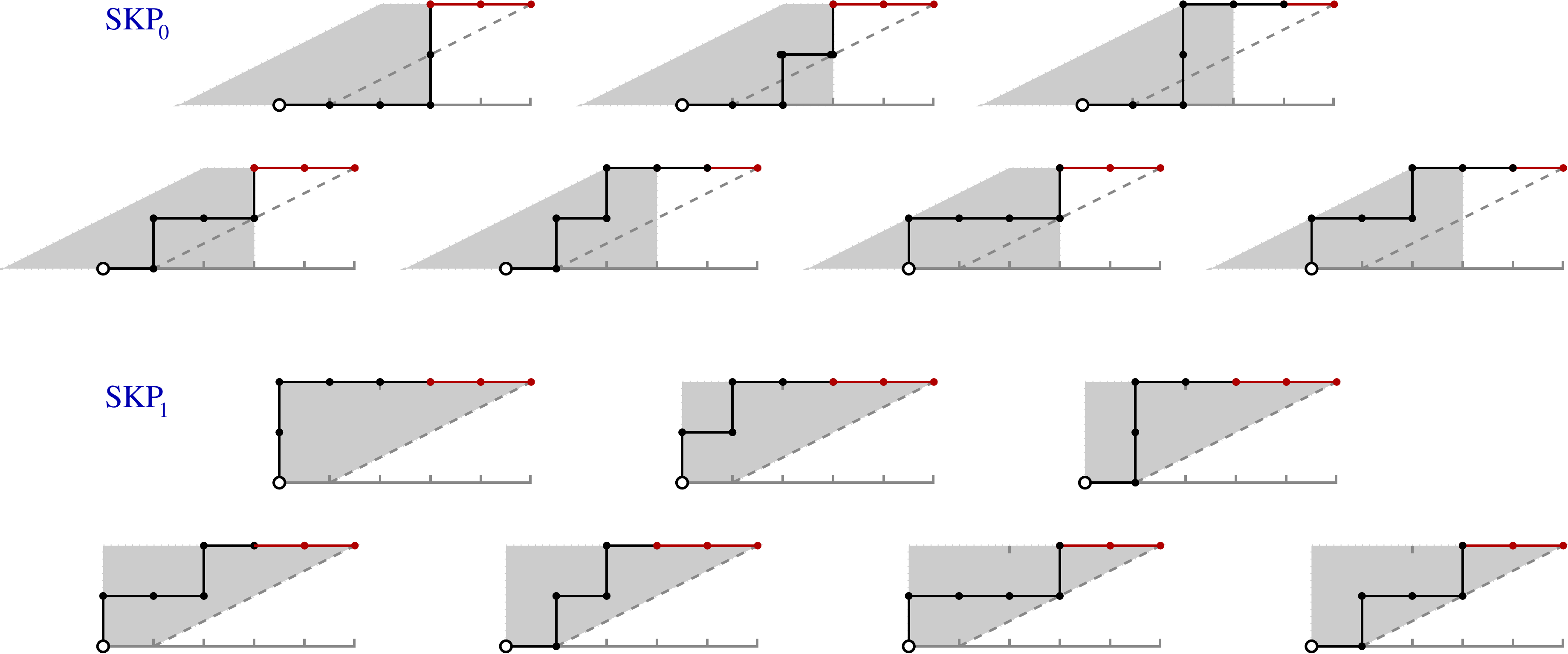}
  \caption{Illustration of the duality between $\skp_0$ and
    $\skp_{n-1}$ in the case of $n=2$, $m=2$, and $c=1$ as described in
    Remark~\ref{rmk:rotate}. The black portion of each path in
    $\skp_0$ can be found, after a 180-degree rotation, as the black
    portion of the corresponding path in $\skp_1$.}
\label{fig:rotation}
\end{center}
\end{figure}

\begin{example}\label{ex:dep}
  Let $\mcG=\Q$, $n=2$, $m=3/2$ and $c=1/2$. 
 When $k=0$, there are three lattice paths
  (consisting of unit-length east steps and north steps) that
  satisfy Condition~(P1), namely $\E\N\N\E\E$, $\N\E\N\E\E$, and $\N\N\E\E\E$. 
  None of these paths satisfy Condition~(P2). But for $k=1$,
  the two lattice paths $\N\E\N\E\E$ and $\N\N\E\E\E$ satisfy both conditions.
 Compare to Example~\ref{ex:dep2} below, which finds the (non-integral)
   $0$-skeletal objects corresponding to these two paths.
\end{example}

\subsection{Area Vectors}
\label{subsec:area-vec}

To study skeletal paths, we develop a bijection between paths
of height $n$ and the area vectors defined next.
An \emph{area vector} is an $n$-tuple 
$g=(g_0,g_1,\ldots,g_{n-1})\in\mcG^n$ such that
$g_{i+1}\leq g_i+m$ for $0\leq i<n-1$. Let $\AV_n$ be the set of all
such area vectors ($\AV_n$ also depends on $m$ and $\mcG$).
We call $g\in\AV_n$ a \emph{Dyck} vector iff all $g_i>0$.

For a path $\pi=\{(x_i,i)\}\in P_n$, 
define the \emph{area vector of $\pi$} to be 
\[ G(\pi)=(g_0,g_1,\ldots,g_{n-1}),\mbox{ where } g_i=mi+c-x_i
\mbox{ for }0\leq i<n. \]
Each $g_i$ is the signed horizontal distance from vertex $(x_i,i)$
to the reference line $x=my+c$. Since $m,c$ belong to the subgroup $\mcG$,
$x_i$ is in $\mcG$ iff $g_i$ is in $\mcG$.
The inequality $x_i\leq x_{i+1}$ is equivalent to $g_{i+1}\leq g_i+m$.  
It follows that $G:P_n\rightarrow\AV_n$ is a bijection.
From the definition of $g_i$, we also deduce:
$x_0\geq 0$ iff $g_0\leq c$; $x_i<mi+c$ iff $g_i>0$; $x_i<mi$ iff $g_i>c$.
Thus, we can rephrase the definition of skeletal paths in terms
of area vectors, as follows.

\begin{prop}
Let $g=(g_0,g_1,\ldots,g_{n-1})$ be the area vector of a path $\pi$.
The path $\pi$ is a $k$-skeletal path (for parameters $c$ and $m$)
if and only if these conditions hold:
\begin{itemize}
\item[(A0)] $g_0\leq c$.
\item[(A1)] The last $k+1$ entries $g_{n-1},\ldots,g_{n-(k+1)}$
of $g$ are all strictly positive.
\item[(A2)] There do not exist $k+1$ consecutive entries $g_i,\ldots,g_{i+k}$
in $g$ that all strictly exceed $c$.
\end{itemize}
\end{prop}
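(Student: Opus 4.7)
The proposition is essentially a translation exercise, since the excerpt already records the needed dictionary between $x$-coordinates and area-vector entries via the identity $g_i = mi + c - x_i$. The plan is to verify the three equivalences (P0)$\iff$(A0), (P1)$\iff$(A1), and (P2)$\iff$(A2) one at a time, each by quoting the appropriate single-coordinate equivalence that the paper has already derived in Section~\ref{subsec:area-vec}.

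First I will record the pointwise equivalences stated immediately before the proposition: from $g_i = mi + c - x_i$ one obtains $x_0 \geq 0 \iff g_0 \leq c$, $x_i < mi + c \iff g_i > 0$, and $x_i < mi \iff g_i > c$. Each is a one-line rearrangement, valid in $\R$ and hence a fortiori for entries in the subgroup $\mcG$.

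With those in hand, (P0) and (A0) coincide immediately via the first equivalence. For (P1) $\iff$ (A1), I apply the second equivalence uniformly to each index $i$ in the range $\{n-(k+1),\, n-k,\, \ldots,\, n-1\}$: demanding $x_i < mi + c$ throughout this range is the same as demanding $g_i > 0$ throughout it. For (P2) $\iff$ (A2), I negate the existential in (P2): the forbidden configuration is a starting index $i \in \{0,1,\ldots, n-(k+1)\}$ for which $x_j < mj$ holds for every $j \in \{i, i+1, \ldots, i+k\}$. By the third equivalence, this is exactly a run of $k+1$ consecutive area-vector entries $g_i, g_{i+1}, \ldots, g_{i+k}$ each strictly exceeding $c$, so forbidding the former is precisely forbidding the latter.

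There is no substantive obstacle. The only thing to watch is the index bookkeeping in (P2) and (A2) — specifically that the range of admissible starting indices runs up to $n-(k+1)$, so that the window $\{i, i+1, \ldots, i+k\}$ stays inside $\{0,1,\ldots,n-1\}$ — but this is already built into both statements as given. The argument works uniformly across all choices of the ambient group $\mcG$ and parameters $c, m \in \mcG$, since the equivalences above are expressed purely with strict/weak inequalities in $\R$ and no case split between $\mcG = \Z$ and $\mcG = \R$ is needed.
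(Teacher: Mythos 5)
Your argument is correct and is exactly the paper's route: the paper derives the same three pointwise equivalences ($x_0\geq 0$ iff $g_0\leq c$; $x_i<mi+c$ iff $g_i>0$; $x_i<mi$ iff $g_i>c$) from $g_i=mi+c-x_i$ and then treats the proposition as an immediate rephrasing, which is what you spell out. The index bookkeeping you flag matches the paper's own restatement of (P1) and (P2) in terms of vertices, so nothing is missing.
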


We call $g\in\AV_n$ a \emph{$k$-skeletal} area vector iff $g$
satisfies (A0), (A1), and (A2). Let $\skv_k$ be the set of such area vectors.

\begin{example}
Let $G=\Z$, $n=3$, and $c=m=1$. The $k$-skeletal paths found in
Example~\ref{ex:skp3} correspond to the following area vectors:
\begin{align*}
\skv_0&=\{(1,1,1),(0,0,1),(1,0,1),(0,1,1),(-1,0,1)\}, \\
\skv_1&=\{(1,1,1),(1,1,2),(1,2,1),(0,1,1),(0,1,2)\}, \\
\skv_2&=\{(1,1,1),(1,1,2),(1,2,1),(1,2,2),(1,2,3)\}.
\end{align*}
\end{example}

\begin{example}\label{ex:kex}
Let $G=\Z$, $n=14$, $c=4$, and $m=1$. Consider the path $\pi=\{(x_i,i)\}$,
where \[ (x_0,x_1,\ldots,x_{13})=(0,0,0,4,4,4,5,9,13,13,13,13,13,14). \]
Viewing $\pi$ as a lattice path from $(0,0)$ to $(14,14)$
as shown in Figure~\ref{fig:kex}, we have
\[ \pi=\N\N\N\E\E\E\E\N\N\N\E\N\E\E\E\E\N\E\E\E\E\N\N\N\N\N\E\N. \]
The area vector of $\pi$ is $G(\pi)=(4,5,6,3,4,5,5,2,-1,0,1,2,3,3)$. 
Since $G(\pi)$ ends in exactly four positive entries,
Condition~(A1) holds for all $k\leq 3$.
Since $G(\pi)$ contains subsequences $5,6$ and $5,5$ but no longer
consecutive subsequence of entries exceeding $4$,
Condition~(A2) holds for all $k\geq 2$. Thus,
the path $\pi$ and area vector $G(\pi)$ are $k$-skeletal for $k=2,3$ (only).
\end{example}
\begin{figure}[h]
\begin{center}
  \includegraphics[width=0.5\linewidth]{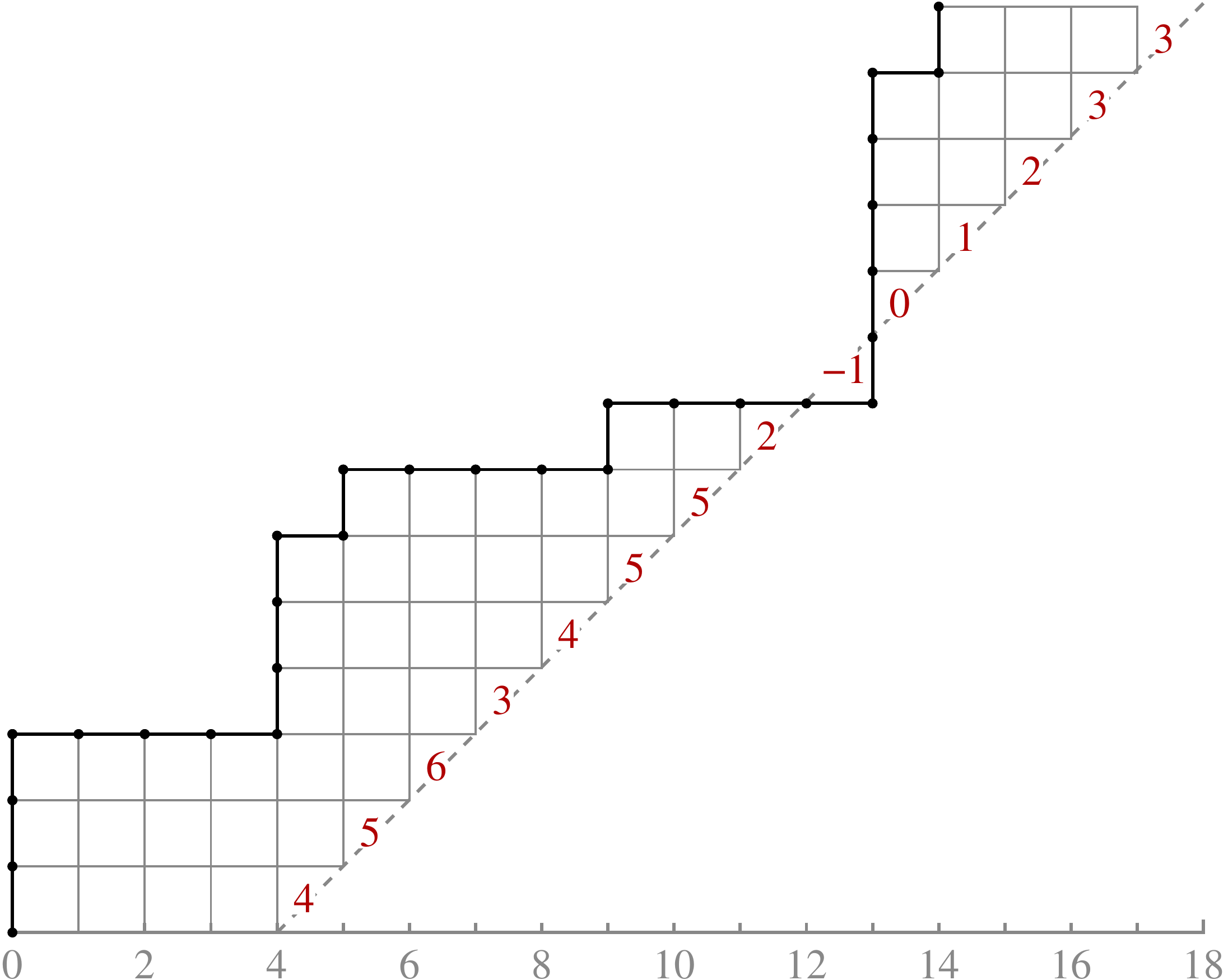}
  \caption{The path $\pi$ from Example~\ref{ex:kex}.}
\label{fig:kex}
\end{center}
\end{figure}

\subsection{Cycling Operator on Area Vectors}
\label{subsec:cycle-area-vec}

Define the \emph{cycling operator} $C:\mcG^n\rightarrow\mcG^n$ by
\[ C(g_0,g_1,\ldots,g_{n-1})=(g_1,\ldots,g_{n-1},g_0-c). \]
$C$ is a bijection with inverse
$C^{-1}(h_0,h_1,\ldots,h_{n-1})=(h_{n-1}+c,h_0,h_1,\ldots,h_{n-2})$. 
We write $C^2=C\circ C$, $C^{-2}=C^{-1}\circ C^{-1}$, and so on.

For $g=(g_0,g_1,\ldots,g_{n-1})\in\mcG^n$, 
define $\area(g)=g_0+g_1+\cdots+g_{n-1}$.
We see that $\area(C(g))=\area(g)-c$.
For $g,h\in\mcG^n$, write $g\succeq h$ to mean $\area(g)\geq\area(h)$.
Define $\pos(g)\in\{0,1,\ldots,n\}$ to be the largest integer $\ell$ 
such that the last $\ell$ entries of $g$ are strictly positive.
Observe that $g$ is a Dyck vector iff $\pos(g)=n$.

Define $S=\{g\in\AV_n: g_0\leq c\mbox{ and }g_{n-1}>0\}$.  Elements of
$S$ are the area vectors of nonnegative paths $\pi$ whose
last vertex is strictly left of the reference line.  For any path
$\pi$ satisfying Conditions~(P0) and (for some $k$)~(P1), 
$G(\pi)$ belongs to $S$.
Define an equivalence relation $\sim$ on $S$ as follows: for $g,h\in
S$, $g\sim h$ means $h=C^j(g)$ for some $j\in\Z$. Informally, this
says that we can go from $g$ to $h$ by applying $C$ or $C^{-1}$
finitely many times, noting that some intermediate vectors along the
way might not belong to $S$.  Since applying $C$ decreases area, each
equivalence class of $\sim$ is totally ordered by the relation
$\succeq$. 

\begin{theorem}\label{thm:skp-eqv-class}
For each $k\in\{0,1,\ldots,n-1\}$, each equivalence class $T$ of
$\sim$ in $S$ contains exactly one $k$-skeletal area vector, namely
the least $g$ in $T$ (relative to $\succeq$) satisfying $\pos(g)>k$.
\end{theorem}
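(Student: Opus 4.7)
The plan is to parametrize each equivalence class $T$ by a bi-infinite sequence, translate conditions (A0)--(A2) into combinatorial conditions on that sequence, and then handle existence and uniqueness via shift arguments.

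First, fix any $g^{(0)} \in T$ and define a bi-infinite sequence $\tilde g:\Z\to\mcG$ by $\tilde g_i = (g^{(0)})_i$ for $0 \le i < n$, extended by $\tilde g_{i+n} = \tilde g_i - c$ for all $i \in \Z$. An easy induction gives $C^j(g^{(0)})_i = \tilde g_{i+j}$, so elements of $T$ correspond to those $j \in \Z$ for which $C^j(g^{(0)}) \in S$. Setting $\phi(i) = L$ if $\tilde g_i > c$ and $\phi(i) = E$ otherwise, and using $\tilde g_{n-1+j} = \tilde g_{j-1} - c$, one checks: $C^j(g^{(0)}) \in S$ iff $\phi(j-1) = L$ and $\phi(j) = E$; $\pos(C^j(g^{(0)})) > k$ iff $\phi(j-1) = \cdots = \phi(j-k-1) = L$; and condition (A2) for $C^j(g^{(0)})$ becomes the condition that no $k+1$ consecutive positions in $[j, j+n-1]$ are all $L$. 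Since the constraints defining $S$ bound the entries of elements of $S$ and $C$ decreases $\area$ by $c$, the set $T$ is finite and totally ordered by $\succeq$.

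For existence, note $\tilde g_i \to +\infty$ as $i \to -\infty$ and $\tilde g_i \to -\infty$ as $i \to +\infty$, so $\phi$ is eventually $L$ on the left and eventually $E$ on the right; in particular the smallest $LE$-transition in $\phi$ has only $L$s to its left, so the corresponding (area-maximum) element of $T$ is a Dyck vector. This shows $T_k = \{g \in T : \pos(g) > k\}$ is nonempty. Let $g^* = \min T_k$, with index $j^* = \max\{j : C^j(g^{(0)}) \in T_k\}$. Only (A2) needs checking for $g^*$: if $k+1$ consecutive $L$s lie in $[j^*, j^* + n - 1]$, the maximal $L$-run $[p, q]$ containing them satisfies $q \ge j^*$ and has length $\ge k + 1$, making $q + 1$ an element of $T_k$ strictly greater than $j^*$, contradicting maximality.

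For uniqueness, suppose $g_1 = C^{j_1}(g^{(0)})$ is $k$-skeletal with $j_1 < j^*$. The crucial shift observation is $\tilde g_{i-n} = \tilde g_i + c$, so $\phi(i) = L$ implies $\phi(i - rn) = L$ for all $r \ge 0$. Applied to the $L$-block $[j^* - k - 1, j^* - 1]$ given by (A1) for $g^*$, this produces an all-$L$ block $[q - k - 1, q - 1]$ for $q = j^* - rn$, for each $r \ge 0$. Choose the unique $r \ge 0$ with $q \in (j_1, j_1 + n]$ (such $r$ exists because the required interval $[(j^*-j_1)/n - 1, (j^*-j_1)/n)$ has length $1$). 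Then:
\begin{itemize}
\item If $q \in (j_1, j_1 + k + 1]$, the shifted block contains $j_1$, forcing $\phi(j_1) = L$ and contradicting $\phi(j_1) = E$ (from $g_1 \in S$).
\item If $q \in (j_1 + k + 1, j_1 + n]$, the shifted block is contained in $[j_1, j_1 + n - 1]$, contradicting (A2) for $g_1$.
\end{itemize}
These intervals partition $(j_1, j_1 + n]$, so both possibilities yield contradictions, and $g^*$ is the unique $k$-skeletal element of $T$.

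The main obstacle is the bookkeeping in the uniqueness step, particularly verifying that the chosen $r$ is nonnegative and that the two subintervals exhaust all cases. Once the bi-infinite framework is set up and the leftward-$L$-propagation is noted, everything else reduces to a routine case analysis.
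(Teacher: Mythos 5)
Your proof is correct, and it reaches the theorem by a genuinely different route from the paper's. The paper proceeds by reformulating Condition~(A2) through a chain of equivalent conditions (A2$'$), (A2$''$), (A2$'''$), the last of which says that every $C^p(g)\in S$ with $p>0$ has $\pos\leq k$; the delicate step there (Case~3 of the last proposition in Section~\ref{subsec:analyze-conditions}) takes a distant witness $C^{p^*}(g)\in S$ with $\pos>k$ and cycles it forward by a carefully chosen amount $\ell$ to manufacture a witness that actually lies in $S$ within the first $n$ applications of $C$. Your bi-infinite word $\phi\in\{L,E\}^{\Z}$ with the periodicity $\tilde g_{i-n}=\tilde g_i+c$ replaces that machinery: membership in $S$, the statistic $\pos$, and Condition~(A2) all become local window conditions on $\phi$, the existence half is your maximal-$L$-run argument, and the uniqueness half shifts the $L$-block of $g^*$ back by a multiple of $n$ so that it either covers $j_1$ (contradicting $\phi(j_1)=E$, i.e.\ $g_1\in S$) or sits inside the window $[j_1,j_1+n-1]$ (contradicting (A2) for $g_1$) --- notably without ever having to exhibit an intermediate element of $S$, which is exactly what costs the paper its Case-3 bookkeeping. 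What the paper's route buys in exchange is the standalone equivalence of (A2) with (A2$'$), which it reuses later when matching the chip-firing condition (C2$'$) against (A2$'$) in Section~\ref{sec:skel-chip}; your argument does not isolate that intermediate statement.

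One detail you elide: the equivalence ``$C^j(g^{(0)})\in S$ iff $\phi(j-1)=L$ and $\phi(j)=E$'' presupposes that $C^j(g^{(0)})$ is an area vector for every $j$, i.e.\ that $\tilde g_{r+1}\leq \tilde g_r+m$ holds for \emph{all} $r\in\Z$, including $r\equiv n-1\pmod{n}$. At those wrap-around indices the inequality reads $g_0-c\leq g_{n-1}+m$, which requires $g^{(0)}\in S$ (this is the paper's Lemma~\ref{lem:area-vec}). It is a one-line fix, but it is a genuine hypothesis-use rather than pure bookkeeping, so it should be stated.
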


Theorem~\ref{thm:main-skp} follows immediately from
Theorem~\ref{thm:skp-eqv-class}:
\begin{proof}[Proof of Theorem~\ref{thm:main-skp}]
  We define a bijection $\skp_k\rightarrow\skp_{k'}$ by mapping each
  $k$-skeletal path $\pi$ to the unique $k'$-skeletal path $\pi'$
  with $G(\pi')\sim G(\pi)$.
\end{proof}

\begin{example}\label{ex:dep2}
Let $\mcG=\Q$, $n=2$, $m=3/2$ and $c=1/2$.  
Two of the infinitely many $1$-skeletal area vectors for these parameters
are $g=(1/2,2)$ and $g'=(1/2,1)$. These area vectors correspond to
the two lattice paths $\N\N\E\E\E$ and $\N\E\N\E\E$ from Example~\ref{ex:dep}.
Applying $C$ to $g$ six times leads to the $0$-skeletal area vector
$(-1,1/2)$ in the same equivalence class as $g$.
Applying $C$ to $g'$ twice leads to the $0$-skeletal area vector
$(0,1/2)$.
\end{example}

The remaining two subsections contain the proof of
Theorem~\ref{thm:skp-eqv-class}.

\subsection{Preliminary Lemmas}
\label{subsec:prelim-lemma}

\begin{lemma}\label{lem:area-vec}
For all $g=(g_0,g_1,\ldots,g_{n-1})\in S$ and all $j\in\Z$,
$C^j(g)$ belongs to $\AV_n$.
\end{lemma}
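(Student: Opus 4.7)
The plan is to exploit the quasi-periodic structure of the cycling operator $C$. Extend $g=(g_0,\ldots,g_{n-1})\in S$ to a bi-infinite sequence $(h_i)_{i\in\Z}$ by setting $h_i=g_i$ for $0\leq i\leq n-1$ and extending by the rule $h_{i+n}=h_i-c$ for every $i\in\Z$. Unwinding the formula for $C$ shows that $C^j(g)=(h_j,h_{j+1},\ldots,h_{j+n-1})$ for every $j\in\Z$. Consequently the lemma reduces to a single claim uniform in $j$: namely, that $h_{i+1}\leq h_i+m$ for every $i\in\Z$.

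I would then verify this inequality in two ranges. For $0\leq i\leq n-2$ it is literally the area-vector inequality $g_{i+1}\leq g_i+m$, which holds because $g\in S\subseteq\AV_n$. For $i=n-1$ the ``wrap-around'' inequality $h_n\leq h_{n-1}+m$ unfolds to $g_0-c\leq g_{n-1}+m$; this is exactly where the hypothesis $g\in S$ is used, since from $g_0\leq c$, $g_{n-1}>0$, and $m\geq 0$ we obtain $g_{n-1}+m+c>c\geq g_0$. Finally, for every other $i\in\Z$, write $i=kn+r$ with $0\leq r\leq n-1$; then by the defining rule $h_{i+1}-h_i=h_{r+1}-h_r$ (or $=h_0-h_{n-1}-c$ in the wrap-around case), so the desired inequality reduces to one of the two cases already handled, differing only by subtracting $kc$ from both sides.

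The main obstacle, if any, is purely notational: once the bi-infinite extension is set up, every case of $C^j(g)\in\AV_n$ collapses onto the $n-1$ inequalities defining $g\in\AV_n$ together with the single extra inequality supplied by membership in $S$. Thus no induction on $|j|$ is needed, and we avoid having to track which intermediate iterates $C^i(g)$ lie in $S$ (in general they will not, which is why the bi-infinite framing is cleaner than iterating the $j=\pm 1$ case directly).
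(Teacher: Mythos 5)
Your proof is correct and is essentially the same argument as the paper's: both reduce the claim to the $n-1$ inequalities already guaranteed by $g\in\AV_n$ (shifted by multiples of $c$) plus the single wrap-around inequality $g_0-c\leq g_{n-1}+m$, which is derived from $g_0\leq c$ and $g_{n-1}>0$ exactly as you do. Your bi-infinite extension $h_{i+n}=h_i-c$ is just a cleaner packaging of the paper's explicit case split $j=ne+p$; as a side note, you in fact obtain the \emph{strict} wrap-around inequality, which the paper points out is needed later in the proof of Lemma~\ref{lem:run}.
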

\begin{proof}
Write $j=ne+p$, where $e\in\Z$ and $0\leq p<n$. If $p=0$, then
\begin{equation}\label{eq:C-en}
 C^j(g)=C^{ne}(g)=(g_0-ec,g_1-ec,\ldots,g_{n-1}-ec). 
\end{equation}
Because $g\in S$, we know $g_{i+1}\leq g_i+m$ for $0\leq i<n-1$.
Adding $-ec$ to both sides of these inequalities, we see that
$C^j(g)$ is an area vector.
If $0<p<n$, then
\begin{equation}\label{eq:C-en+p}
C^j(g)=C^{ne+p}(g)=(g_p-ec,\ldots,g_{n-1}-ec;g_0-(e+1)c,\ldots,g_{p-1}-(e+1)c). 
\end{equation}
Each pair of consecutive entries separated by a comma satisfies the
needed inequality to be an area vector,  as we see
by adding $-ec$ or $-(e+1)c$ to one of the inequalities
$g_{i+1}\leq g_i+m$. We must also check that the pair of entries separated by
a semicolon satisfies $g_0-(e+1)c\leq g_{n-1}-ec+m$, or equivalently
$g_0-m\leq c+g_{n-1}$. Because $g\in S$, this last inequality follows
by adding the inequalities $g_0\leq c$ and $-m\leq 0<g_{n-1}$. 
We even have the \emph{strict} inequality $g_0-(e+1)c<(g_{n-1}-ec)+m$,
a fact that we will need later.
\end{proof}

\begin{lemma}\label{lem:eqv-fin}
Each equivalence class of $\sim$ in $S$ is finite.
\end{lemma}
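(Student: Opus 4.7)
The plan is to deduce finiteness from the boundedness of the $\area$ functional on $S$, combined with the identity $\area(C^j(g))=\area(g)-jc$ noted just before Lemma~\ref{lem:area-vec} and the standing hypothesis $c>0$.

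First, I would verify that the coordinate functions (and hence $\area$) are uniformly bounded on $S$. For any $g=(g_0,\ldots,g_{n-1})\in S$, the upper constraint $g_0\leq c$ together with the area-vector inequalities $g_{i+1}\leq g_i+m$ iterated forward gives $g_i\leq c+im$ for every $i$; dually, the lower constraint $g_{n-1}>0$ together with $g_i\geq g_{i+1}-m$ iterated backward gives $g_i>-(n-1-i)m$. Summing these coordinate bounds yields that $\area(g)$ lies in a fixed bounded interval depending only on $n$, $m$, and $c$.

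Now suppose for contradiction that the $\sim$-equivalence class of some $g\in S$ is infinite. Then infinitely many $j\in\Z$ satisfy $C^j(g)\in S$, and since $\area(C^j(g))=\area(g)-jc$ with $c>0$, the corresponding areas form an unbounded subset of the arithmetic progression $\{\area(g)-jc:j\in\Z\}$ (unbounded above if infinitely many such $j$ are negative, and unbounded below if infinitely many are positive). This contradicts the uniform bound on $\area|_S$ just established, so the equivalence class is finite.

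The only mild obstacle is the coordinate-bound step, where one must carefully iterate the inequality $g_{i+1}\leq g_i+m$ in both directions, using each of the two defining constraints of $S$ separately to extract both an upper and a lower bound on every entry. Once these routine bounds are in hand, the conclusion follows immediately from the strict monotonicity of $\area$ along any $C$-orbit.
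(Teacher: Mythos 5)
Your proof is correct. It takes a mildly different route from the paper's: the paper argues entry-wise, observing from the explicit formulas for $C^{ne}(g)$ and $C^{ne+p}(g)$ that for all sufficiently large positive $j$ every entry of $C^j(g)$ is nonpositive (violating $g_{n-1}>0$) and for all sufficiently negative $j$ every entry exceeds $c$ (violating $g_0\leq c$), so the class sits inside an explicit finite window $\{C^r(g): i_0<r<j_0\}$. You instead work with the aggregate functional $\area$, first establishing the uniform coordinate bounds $-(n-1-i)m< g_i\leq c+im$ on $S$ (which correctly uses both defining constraints of $S$ together with the area-vector inequalities iterated in each direction) and then invoking $\area(C^j(g))=\area(g)-jc$ with $c>0$ to force unboundedness of area along any infinite orbit intersected with $S$. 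Both arguments hinge on the same drift-by-$c$ phenomenon; the paper's version is slightly more direct and yields an explicit containing window, while yours requires the extra (routine) bounding step but connects naturally with the total ordering of each class by $\succeq$ that the paper exploits immediately afterward. One small point worth making explicit: to pass from ``the class is infinite'' to ``infinitely many $j$ satisfy $C^j(g)\in S$,'' note that distinct elements of the class necessarily arise from distinct exponents $j$, which is immediate; your argument is otherwise complete.
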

\begin{proof}
Consider the equivalence class of some $g\in S$.
Since $c>0$, there exists $j_0\in\Z_{>0}$ so that for all $j\geq j_0$,
all entries in $C^j(g)$ are nonpositive. This is clear from
formulas~\eqref{eq:C-en} and~\eqref{eq:C-en+p}. Similarly,
there exists $i_0\in\Z_{<0}$ so that for all $i\leq i_0$,
all entries in $C^i(g)$ exceed $c$. By definition of $S$,
the equivalence class of $g$ must be a subset of the finite
set $\{C^r(g): i_0<r<j_0\}$.
\end{proof}

The next lemma proves the $k=n-1$ case of Theorem~\ref{thm:skp-eqv-class}.

\begin{lemma}\label{lem:dyck-rep}
Each equivalence class $T$ of $\sim$ contains exactly
one Dyck vector.
\end{lemma}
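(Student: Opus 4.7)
The plan is to prove existence and uniqueness of a Dyck vector in each equivalence class $T$ separately, working with the $C$-orbit of any representative. The key observation is that $C$ cyclically shifts entries while subtracting $c$ from the one moved to the back, so the area of $C^j(g)$ strictly decreases in $j$, and repeated application systematically drags entries downward.

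\emph{Existence.} Fix any $g\in T$ and consider
\[
J=\{j\in\Z : C^j(g)\text{ is a Dyck vector}\}.
\]
I would first check that $J$ is nonempty and bounded above. Nonemptiness: by formula~\eqref{eq:C-en}, $C^{-en}(g)=(g_0+ec,g_1+ec,\ldots,g_{n-1}+ec)$ is Dyck for all sufficiently large $e>0$. Bounded above: formulas~\eqref{eq:C-en} and~\eqref{eq:C-en+p} show that for $j$ large every entry of $C^j(g)$ is negative. Hence $j^*:=\max J$ exists. Set $h=C^{j^*}(g)$. Because $h$ is Dyck its last entry is positive, so to conclude $h\in T$ it remains only to show $h_0\leq c$. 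If instead $h_0>c$, then $C(h)=(h_1,\ldots,h_{n-1},h_0-c)$ would also be a Dyck vector (its first $n-1$ entries are positive because $h$ is Dyck, and its last entry is $h_0-c>0$), giving $j^*+1\in J$ and contradicting the maximality of $j^*$. Thus $h_0\leq c$, and $h=C^{j^*}(g)$ is the desired Dyck representative in $T$.

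\emph{Uniqueness.} Suppose $g,g'\in T$ are both Dyck vectors with $g'=C^j(g)$; without loss of generality $j\geq 0$, and I aim to conclude $j=0$. Write $j=qn+p$ with $q\geq 0$ and $0\leq p<n$. From the explicit formulas~\eqref{eq:C-en} and~\eqref{eq:C-en+p}, the coordinate of $g'$ inherited from the original entry $g_0$ is
\[
g_0-qc \quad\text{if } p=0,\qquad g_0-(q+1)c \quad\text{if } p\geq 1.
\]
The Dyck condition forces this coordinate to be strictly positive, giving $g_0>qc$ or $g_0>(q+1)c$ respectively. If $j\geq 1$ then in either case the right-hand side is at least $c$, so $g_0>c$, contradicting the defining inequality $g_0\leq c$ of $S$. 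Hence $j=0$ and $g=g'$.

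I do not anticipate a substantial obstacle: both halves of the proof reduce to the explicit bookkeeping in formulas~\eqref{eq:C-en}--\eqref{eq:C-en+p}, and the single inequality $g_0\leq c$ built into the definition of $S$ is precisely what prevents a second Dyck vector from appearing in the orbit. Membership of the iterates in $\AV_n$ is already guaranteed by Lemma~\ref{lem:area-vec}, and finiteness issues are not needed for this argument (although Lemma~\ref{lem:eqv-fin} would give an alternative route to existence via the minimum-area element of $T$).
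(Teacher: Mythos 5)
Your proof is correct and follows essentially the same strategy as the paper: both arguments analyze the $C$-orbit via the explicit formulas~\eqref{eq:C-en} and~\eqref{eq:C-en+p}, locate an extremal Dyck iterate for existence, and for uniqueness track where the entry $g_0$ lands to show that the inequality $g_0\leq c$ forbids any later Dyck vector in the orbit. Your uniqueness computation is a slightly more explicit version of the paper's informal ``the entry $g_0-c\leq 0$ persists'' argument, and your maximality-of-$j^*$ construction for existence is interchangeable with the paper's minimal-$e$-then-minimal-$j$ construction.
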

\begin{proof}
Let $T$ be a fixed equivalence class in $S$. We first show $T$ contains
at most one Dyck vector. Suppose $g=(g_0,g_1,\ldots,g_{n-1})\in T$
is a Dyck vector. Then $g_0\leq c$ (since $g\in S$), so
$C(g)=(g_1,\ldots,g_{n-1},g_0-c)$ has last entry $\leq 0$. 
If we continue to apply $C$, this entry will move left through the vector,
and eventually it cycles back to the right and becomes even smaller.
We see that all vectors following any Dyck vector $g$ in the totally 
ordered set $T$ are not Dyck vectors. Thus $T$ cannot contain two different 
Dyck vectors.

Next we show that $T$ does contain a Dyck vector.
Start with any $g=(g_0,g_1,\ldots,g_{n-1})$ in $T$.
If $g$ is a Dyck vector, then there is nothing to prove.
Otherwise, choose the least integer $e>0$ such that
\[ g^*=C^{-en}(g)=(g_0+ec,g_1+ec,\ldots,g_{n-1}+ec)\in\mcG^n \]
has all positive entries.  Note that $g^*$ may not belong to $S$.
By minimality of $e$, there must exist $j$ with $0<g_j+ec\leq c$.
Choose the least such index $j$, and let
\[ g^+=C^j(g^*)=(g_j+ec,g_{j+1}+ec,\ldots,g_{n-1}+ec,
  g_0+(e-1)c,g_1+(e-1)c,\ldots,g_{j-1}+(e-1)c). \]
In this new vector, the first entry $g_j+ec$ is $\leq c$ by choice of $j$.
All entries (including the last one) are strictly positive, by choice of $e$ 
and $j$.  Also $g^+$ is an area vector by Lemma~\ref{lem:area-vec}.
So $g^+$ is in $S$ and is a Dyck vector in $T$.
\end{proof}

\begin{example}\label{ex:find-pos}
Let $\mcG=\Z$, $n=14$, $c=4$, $m=1$, and 
$g=(3,4,5,5,2,-1,0,1,2,3,3,0,1,2)\in S$.
Following the proof of the lemma, we take $e=1$ to get
$$g^*=C^{-14}(g)=(7,8,9,9,6,3,4,5,6,7,7,4,5,6),$$ which 
has all positive entries but is not in $S$. We find $j=5$ and set
$$g^+=C^5(g^*)=(3,4,5,6,7,7,4,5,6,3,4,5,5,2).$$
This is the unique Dyck vector
in $S$ equivalent to $g$. Starting at $g^+$ and applying $C$ repeatedly,
the equivalence class of $g$ is $\{g^+,g',g,g''\}$, where
\[ g'=C^6(g^+)=(4,5,6,3,4,5,5,2,-1,0,1,2,3,3);\quad
   g=C^9(g^+); \]
\[ g''=C^{13}(g^+)=(2,-1,0,1,2,3,3,0,1,2,-1,0,1,1). \]
We have $\pos(g^+)=14=n$, $\pos(g')=4$, $\pos(g)=2$, and $\pos(g'')=2$.
The definition shows that $g^+$ is $k$-skeletal for $4\leq k<14$,
   $g'$ is $k$-skeletal for $2\leq k<4$,
   $g$ is not $k$-skeletal for any $k$, and
   $g''$ is $k$-skeletal for $0\leq k<2$.
This agrees with the conclusion of Theorem~\ref{thm:skp-eqv-class}.
\end{example}

Given $g\in S$, the next lemma determines which subsequent objects
$C^j(g)$ also belong to $S$.

\begin{lemma}\label{lem:in-S}
Suppose $g=(g_0,g_1,\ldots,g_{n-1})$ is in $S$.
\\ (a) For each $e>0$, $C^{en}(g)\in S$ if and only if 
 $g_0\leq (e+1)c$ and $g_{n-1}>ec$.
\\ (b) For all $e,p$ with $e\geq 0$ and $0<p<n$,
 $C^{en+p}(g)\in S$ if and only if $g_p\leq (e+1)c$ and $g_{p-1}>(e+1)c$.
\end{lemma}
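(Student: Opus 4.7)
The plan is to reduce everything to the explicit formulas for $C^{en}(g)$ and $C^{en+p}(g)$ that were already derived in the proof of Lemma~\ref{lem:area-vec}, namely
\[ C^{en}(g)=(g_0-ec,g_1-ec,\ldots,g_{n-1}-ec) \]
and, for $0<p<n$,
\[ C^{en+p}(g)=(g_p-ec,\ldots,g_{n-1}-ec;\,g_0-(e+1)c,\ldots,g_{p-1}-(e+1)c). \]
By Lemma~\ref{lem:area-vec}, each of these tuples already lies in $\AV_n$. Hence membership in $S=\{h\in\AV_n : h_0\leq c,\ h_{n-1}>0\}$ is determined solely by reading off the first and last entries of these tuples.

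For part (a), the first entry of $C^{en}(g)$ is $g_0-ec$ and the last entry is $g_{n-1}-ec$. The inequality $g_0-ec\leq c$ is equivalent to $g_0\leq (e+1)c$, and $g_{n-1}-ec>0$ is equivalent to $g_{n-1}>ec$. These two equivalences give exactly the stated criterion.

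For part (b), I will read off the first and last entries of the displayed tuple for $C^{en+p}(g)$. The first entry is $g_p-ec$, so the condition $h_0\leq c$ becomes $g_p\leq (e+1)c$. The last entry is $g_{p-1}-(e+1)c$, so the condition $h_{n-1}>0$ becomes $g_{p-1}>(e+1)c$. Combining these yields the claimed characterization.

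There is no genuine obstacle; the entire argument is a direct bookkeeping calculation using the two formulas above together with the fact that Lemma~\ref{lem:area-vec} already provides membership in $\AV_n$. The only mild subtlety is making sure to distinguish the exponent $e$ appearing on the ``front'' block from $e+1$ appearing on the ``back'' block in part (b), which is handled by pointing directly at the corresponding coordinate in the explicit formula.
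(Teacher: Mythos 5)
Your proposal is correct and is essentially identical to the paper's own proof, which likewise invokes Lemma~\ref{lem:area-vec} to guarantee membership in $\AV_n$ and then reads off the first and last entries from the explicit formulas~\eqref{eq:C-en} and~\eqref{eq:C-en+p}. Your version just spells out the bookkeeping that the paper leaves as ``follow at once.''
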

\begin{proof}
Because $g\in S$, Lemma~\ref{lem:area-vec} shows that all vectors $C^j(g)$
are area vectors. Assertions~(a) and~(b) follow at once 
from~\eqref{eq:C-en} and~\eqref{eq:C-en+p} and the definition of $S$.
\end{proof}

\subsection{Analysis of $k$-Skeletal Conditions}
\label{subsec:analyze-conditions}

To finish proving Theorem~\ref{thm:skp-eqv-class}, we reformulate
the $k$-skeletal Conditions~(A0),~(A1), and~(A2) in several ways.

\begin{prop}\label{prop:A012'}
An area vector $g=(g_0,g_1,\ldots,g_{n-1})$ is $k$-skeletal 
iff these conditions hold:
\begin{itemize}
\item[(A0)] $g_0\leq c$.
\item[(A1$'$)] $\pos(g)>k$.
\item[(A2$'$)] For all $p\in\{1,2,\ldots,n\}$, 
if ($p=n$ or $g_p\leq c$) and $g_{p-1}>c$, then 
$\pos(C^p(g))\leq k$.
\end{itemize}
\end{prop}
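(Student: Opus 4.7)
The plan is to verify the proposition by handling the two replacement conditions separately. The equivalence (A1) $\iff$ (A1$'$) is immediate from the definition: $\pos(g)$ is by definition the largest $\ell$ such that the last $\ell$ entries of $g$ are strictly positive, so (A1) (the last $k+1$ entries are strictly positive) says exactly that $\pos(g) \geq k+1$, i.e., $\pos(g) > k$. Thus it suffices, assuming (A0), to prove (A2) $\iff$ (A2$'$).

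The main step is a direct computation of $\pos(C^p(g))$ for values of $p$ satisfying the hypothesis of (A2$'$). The indices $p \in \{1, \ldots, n\}$ for which $g_{p-1} > c$ and ($p=n$ or $g_p \leq c$) are precisely the numbers $i_2+1$ where $i_2$ is the right endpoint of a maximal run $g_{i_1}, g_{i_1+1}, \ldots, g_{i_2}$ of consecutive entries of $g$ strictly exceeding $c$. I would use the explicit formulas~\eqref{eq:C-en} and~\eqref{eq:C-en+p} for $C^p(g)$: for $0 < p < n$ the last entries of $C^p(g)$, read right-to-left, are $g_{p-1}-c, g_{p-2}-c, \ldots, g_0 - c, g_{n-1}, g_{n-2}, \ldots$, while for $p=n$ they are $g_{n-1}-c, g_{n-2}-c, \ldots, g_0-c$. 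In both cases, entries $g_{i_2}-c, g_{i_2-1}-c, \ldots, g_{i_1}-c$ are all strictly positive, and the next entry to be examined is $g_{i_1-1}-c$. By (A0), $g_0 \leq c$, so no run can begin at index $0$; hence $i_1 \geq 1$ and $g_{i_1-1} - c \leq 0$ by maximality of the run. The count therefore terminates exactly at the left endpoint of the run, giving $\pos(C^p(g)) = i_2 - i_1 + 1$, i.e., the run length.

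With this identity in hand, both implications are immediate. Condition (A2) says every maximal run of entries exceeding $c$ has length at most $k$, which, by the identity, is equivalent to $\pos(C^p(g)) \leq k$ for every $p$ of the form $i_2 + 1$; this is exactly (A2$'$). Conversely, if (A2) fails then there is a maximal run of length at least $k+1$, and the corresponding $p$ satisfies $\pos(C^p(g)) \geq k+1$, violating (A2$'$).

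The main obstacle, and the only nontrivial part, is the careful bookkeeping of indices in the cyclic shift $C^p$: one must verify that the run of strictly positive final entries of $C^p(g)$ corresponds precisely to the run of entries of $g$ exceeding $c$ ending at index $p-1$, and does not extend further by ``wrapping around.'' Assumption (A0) is exactly what rules out the wraparound case $i_1 = 0$, and is therefore used in an essential way.
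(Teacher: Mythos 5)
Your proof is correct and follows essentially the same route as the paper: (A1)$\iff$(A1$'$) is definitional, and (A2)$\iff$(A2$'$) is established by examining $C^p(g)$ via the explicit cycling formulas at the right endpoint of each maximal run of entries exceeding $c$. Your version is in fact slightly sharper than the paper's, since you prove the exact identity $\pos(C^p(g))=i_2-i_1+1$ and make explicit that (A0) is what forbids the wraparound $i_1=0$ — a point the paper compresses into ``reversing the reasoning in the two cases.''
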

\begin{proof}
Condition~(A1) says that the last $k+1$ entries of $g$ (and perhaps more
entries) are strictly positive, which is equivalent to $\pos(g)>k$, 
as stated in (A1$'$). 
Next, assume $g\in\AV_n$ fails Condition~(A2).
Choose $i$ and $k$ so that $g_i,g_{i+1},\ldots,g_{i+k}$ all exceed $c$
and either $i+k=n-1$ or $g_{i+k+1}\leq c$. 
In the case $i+k=n-1$, let $p=n$. Then $g_{p-1}=g_{n-1}>c$ and
\[ C^p(g)=(g_0-c,g_1-c,\ldots,g_{n-1}-c) \]
has $\pos(C^p(g))>k$, which means that Condition~(A2$'$) fails.
In the case $i+k<n-1$ and $g_{i+k+1}\leq c$, let $p=i+k+1$.
Then $g_p\leq c$, $g_{p-1}>c$, and
\[ C^p(g)=(g_p,\ldots,g_{n-1},g_0-c,g_1-c,\ldots,g_{p-1}-c) \] 
has $\pos(C^p(g))>k$, so Condition~(A2$'$) fails.
Similarly, reversing the reasoning in the two cases shows that
the failure of (A2$'$) implies the failure of (A2).
So Conditions~(A2) and~(A2$'$) are logically equivalent.
\end{proof}

\begin{prop}
An area vector $g=(g_0,g_1,\ldots,g_{n-1})$ is $k$-skeletal 
iff these conditions hold:
\begin{itemize}
\item[(A0)] $g_0\leq c$.
\item[(A1$'$)] $\pos(g)>k$.
\item[(A2$''$)] For all $p\in\{1,2,\ldots,n\}$ such that
 $C^p(g)\in S$, $\pos(C^p(g))\leq k$.
\end{itemize}
\end{prop}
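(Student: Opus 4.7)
The plan is to invoke the preceding proposition---which equates being $k$-skeletal with the triple (A0), (A1$'$), (A2$'$)---and then show that under (A0) and (A1$'$), the two conditions (A2$'$) and (A2$''$) are logically equivalent. Since both conditions have the identical conclusion ``$\pos(C^p(g))\leq k$'' quantified universally over $p\in\{1,2,\ldots,n\}$, the task reduces to proving that their respective hypotheses coincide pointwise in $p$.

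To set up an application of Lemma~\ref{lem:in-S}, I would first observe that (A0) and (A1$'$) together place $g$ in $S$: (A0) directly supplies $g_0\leq c$, while (A1$'$) yields $\pos(g)\geq k+1\geq 1$, hence $g_{n-1}>0$. Lemma~\ref{lem:area-vec} then guarantees $C^p(g)\in\AV_n$ for every $p$, so membership of $C^p(g)$ in $S$ reduces to checking the inequalities on its first and last entries. For $1\leq p<n$ I would apply Lemma~\ref{lem:in-S}(b) with $e=0$, obtaining $C^p(g)\in S$ iff $g_p\leq c$ and $g_{p-1}>c$---precisely the hypothesis of (A2$'$) in this range.

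The $p=n$ case I would handle via Lemma~\ref{lem:in-S}(a) with $e=1$: $C^n(g)\in S$ iff $g_0\leq 2c$ and $g_{n-1}>c$. Invoking (A0) together with $c>0$ makes the first inequality automatic, leaving $g_{n-1}>c$, which matches the hypothesis of (A2$'$) at $p=n$ (where the ``$p=n$'' disjunct of the disjunction ``$p=n$ or $g_p\leq c$'' holds trivially). With the hypotheses in (A2$'$) and (A2$''$) matched for every $p$, the equivalence follows. There is no real obstacle here: the proof is essentially a bookkeeping translation of (A2$'$) using Lemma~\ref{lem:in-S}, and the only subtle point is recognizing that (A0) is needed to absorb the extraneous condition $g_0\leq 2c$ in the boundary case $p=n$.
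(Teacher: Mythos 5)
Your argument is correct and follows the paper's proof essentially verbatim: both reduce the claim to matching the hypothesis of (A2$'$) with the condition $C^p(g)\in S$, using Lemma~\ref{lem:in-S}(b) with $e=0$ for $0<p<n$ and Lemma~\ref{lem:in-S}(a) with $e=1$ for $p=n$, where (A0) and $c>0$ absorb the extraneous inequality $g_0\leq 2c$. No differences worth noting.
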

\begin{proof}
Assume $g\in\AV_n$ satisfies (A0) and~(A1$'$), so $g\in S$.
We need only confirm that the hypothesis
``if ($p=n$ or $g_p\leq c$) and $g_{p-1}>c$'' in (A2$'$)
is logically equivalent to the condition $C^p(g)\in S$.
Consider the case $p=n$.
The hypothesis simplifies to $g_{n-1}>c$. 
Taking $e=1$ in Lemma~\ref{lem:in-S}(a),
$C^n(g)\in S$ iff $g_0\leq 2c$ and $g_{n-1}>c$. The condition $g_0\leq 2c$
is already guaranteed because $g_0\leq c<2c$ (as $g\in S$ and $c>0$).
So $C^n(g)\in S$ is equivalent to $g_{n-1}>c$, as needed.
Consider the case $0<p<n$. The hypothesis in (A2$'$) simplifies to
``$g_p\leq c$ and $g_{p-1}>c$.'' 
Taking $e=0$ in Lemma~\ref{lem:in-S}(b), we see
this condition is equivalent to $C^p(g)\in S$.
\end{proof}

After one last technical adjustment to Condition~(A2), 
we will be ready to prove Theorem~\ref{thm:skp-eqv-class}.

\begin{prop}
An area vector $g=(g_0,g_1,\ldots,g_{n-1})$ is $k$-skeletal 
iff these conditions hold:
\begin{itemize}
\item[(A0)] $g_0\leq c$.
\item[(A1$'$)] $\pos(g)>k$.
\item[(A2$'''$)] For all $p\in\Z_{>0}$ such that
 $C^p(g)\in S$, $\pos(C^p(g))\leq k$.
\end{itemize}
\end{prop}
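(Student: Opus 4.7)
The plan is to leverage the fact that (A0) and (A1$'$) appear verbatim in both the previous proposition and the present one, so it suffices to show that (A2$''$) and (A2$'''$) are equivalent given that $g\in\AV_n$ satisfies (A0) and (A1$'$) (so in particular $g\in S$). The implication (A2$'''$)$\Rightarrow$(A2$''$) is immediate from the containment $\{1,2,\ldots,n\}\subset\Z_{>0}$. For the converse, my strategy is to bypass (A2$''$) altogether and invoke the original condition (A2): the two preceding propositions together show that (A2$''$) and (A2) are equivalent under (A0) and (A1$'$), so it suffices to prove (A2)$\Rightarrow$(A2$'''$).

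Given $p\in\Z_{>0}$ with $C^p(g)\in S$, I would write $p=en+r$ with $e\geq 0$ and $0\leq r<n$. If $e=0$, then $p=r\in\{1,\ldots,n-1\}$ and (A2$''$) applies directly. Otherwise $e\geq 1$, and I would write out $C^p(g)$ via formulas~\eqref{eq:C-en} and~\eqref{eq:C-en+p}. The target is to show that the trailing $\ell:=\pos(C^p(g))$ positive entries of $C^p(g)$ correspond to a block of $\ell$ consecutive entries of $g$ each strictly exceeding $c$; once this is established, (A2) forces $\ell\leq k$.

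When $r=0$, the last $\ell$ entries of $C^{en}(g)=(g_0-ec,\ldots,g_{n-1}-ec)$ are positive exactly when $g_{n-\ell},\ldots,g_{n-1}$ all exceed $ec$, and hence exceed $c$ since $e\geq 1$ and $c>0$. When $0<r<n$, the pivotal observation is that the entry at position $n-r$ of $C^{en+r}(g)$ equals $g_0-(e+1)c\leq c-(e+1)c=-ec<0$, using $g_0\leq c$ and $e,c>0$. This non-positive entry forces $\ell\leq r-1$, which then pins the trailing positive block to positions $n-\ell,\ldots,n-1$ of $C^p(g)$, corresponding to $g_{r-\ell},\ldots,g_{r-1}$ each exceeding $(e+1)c>c$. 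In either case, (A2) yields $\ell\leq k$, as required.

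The main obstacle is the mixed case $e\geq 1$, $0<r<n$: one might naively try to reduce to some $p'\in\{1,\ldots,n\}$ with $C^{p'}(g)\in S$ and $\pos(C^{p'}(g))\geq\pos(C^p(g))$, but the natural choice $p'=r$ can fail because $C^r(g)$ need not lie in $S$. By Lemma~\ref{lem:in-S}(b) that would require $g_r\leq c$, whereas we only know $g_r\leq(e+1)c$. Routing the argument through (A2) sidesteps this entirely, since (A2) detects runs of $g$-entries exceeding $c$ with no reference to whether any particular cyclic rotate of $g$ itself lies in $S$.
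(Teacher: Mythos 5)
Your proof is correct, and it takes a genuinely different route from the paper's. The paper argues contrapositively: assuming some $p^*>0$ violates (A2$'''$), it constructs a witness $p\in\{1,\ldots,n\}$ violating (A2$''$), which in the mixed case $p^*=ne+r$ with $e\geq 1$, $0<r<n$ requires passing to $C^r(g)$, shifting by the length $\ell$ of its initial run of entries exceeding $c$, and then verifying via Lemma~\ref{lem:in-S} that the resulting $C^{r+\ell}(g)$ actually lies in $S$ with $\pos$ still exceeding $k$. You instead bypass (A2$''$) for large $p$ entirely and go back to the original condition (A2): using the explicit formulas~\eqref{eq:C-en} and~\eqref{eq:C-en+p} you read off that, for $e\geq 1$, the trailing positive run of $C^p(g)$ is literally a block of consecutive entries of $g$ all exceeding $c$ (in the mixed case because the entry $g_0-(e+1)c\leq -ec<0$ caps $\pos(C^p(g))$ at $r-1$ and confines the run to the second block, where entries exceed $(e+1)c>c$), so (A2) bounds its length by $k$ directly. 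This buys you a cleaner argument: you never need to produce a witness in $S$, never invoke Lemma~\ref{lem:in-S}, and in fact you prove the slightly stronger fact that $\pos(C^p(g))\leq k$ for all $p>n$ whether or not $C^p(g)\in S$. What the paper's construction buys in exchange is that its "shift until you re-enter $S$" step is the same mechanism reused later in the proof of Lemma~\ref{lem:run}, so the two treatments share bookkeeping. Your closing diagnosis of why the naive reduction $p'=r$ fails is also exactly the difficulty the paper's Case~3 is designed to overcome.
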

\begin{proof}
Assume $g\in\AV_n$ satisfies~(A0) and~(A1$'$), so $g\in S$.
Certainly, if $g$ satisfies (A2$'''$), then $g$ satisfies (A2$''$).
To prove the converse, assume $g$ fails (A2$'''$),
meaning there exists $p^*>0$ with $C^{p^*}(g)\in S$ and $\pos(C^{p^*}(g))>k$.
We prove $g$ fails (A2$''$) by finding $p\in\{1,2,\ldots,n\}$ with
$C^p(g)\in S$ and $\pos(C^p(g))>k$.

Case~1: $p^*\leq n$. Then we take $p=p^*$.

Case~2: $p^*=en$ for some $e>1$. Here,
$C^{p^*}(g)=(g_0-ec,g_1-ec,\ldots,g_{n-1}-ec)$. Take $p=n$, so
$C^p(g)=(g_0-c,g_1-c,\ldots,g_{n-1}-c)$. We obtain $C^p(g)$
from $C^{p^*}(g)$ by increasing each entry by $(e-1)c>0$.
Since $\pos(C^{p^*}(g))>k$, the last $k+1$ entries of $C^{p^*}(g)$ 
are positive. So the last $k+1$ entries of $C^p(g)$ are positive,
and $\pos(C^p(g))>k$. Since $C^{p^*}(g)$ is in $S$,
Lemma~\ref{lem:in-S}(a) gives $g_0\leq (e+1)c$ and $g_{n-1}>ec$.
Since $g_0\leq c$ also, we deduce $g_0\leq 2c$ and $g_{n-1}>c$.
Lemma~\ref{lem:in-S}(a) now shows $C^p(g)\in S$, as needed.

Case~3: $p^*=ne+r$ for some $e\geq 1$ and $0<r<n$. By assumption,
\[ C^{p^*}(g)=(g_r-ec,\ldots,g_{n-1}-ec,g_0-(e+1)c,\ldots,g_{r-1}-(e+1)c) \]
is in $S$ and its last $k+1$ entries are positive. Applying $C^{-ne}$
to $C^{p^*}(g)$ increases all entries by $ec$, leading to the vector
\[ C^r(g)=(g_r,\ldots,g_{n-1},g_0-c,\ldots,g_{r-1}-c). \]
This vector also has its last $k+1$ entries positive, but $C^r(g)$
need not be in $S$ because $g_r>c$ could occur.  Let $\ell$ be as
large as possible such that the first $\ell$ entries of $C^r(g)$
strictly exceed $c$.  We have $0\leq \ell \leq n-r$ since the
$(n-r+1)$\nobreakdash-th entry is $g_0-c$, and $g_0-c\leq 0<c$. Let $p=r+\ell$, which
is in $\{1,2,\ldots,n\}$.  We compute
\[ C^p(g)=C^{\ell}(C^r(g))
=(g_p,\ldots,g_{n-1},g_0-c,\ldots,g_{r-1}-c,g_r-c,\ldots,g_{p-1}-c).\]
The $\ell$ values that cycle to the right end when we pass from
$C^r(g)$ to $C^p(g)$ start larger than $c$ and get decreased by $c$,
so the cycled values are still strictly positive. 
So $\pos(C^p(g))\geq \pos(C^r(g))>k$. To finish, we show $C^p(g)\in S$.
If $r<p<n$, then $C^p(g)\in S$ because $g_p\leq c$ (by choice of $\ell$)
and $g_{p-1}-c>0$ (due to $g_{p-1}$ being cycled to the right end).
If $p=r$, then $\ell=0$, so $g_r\leq c$ and $C^p(g)=C^r(g)$ is in $S$.
If $p=n$, then $C^p(g)\in S$ because $g_{n-1}-c>0$ (due to $g_{n-1}$ being
cycled to the right end) and $g_0-c<g_0\leq c$.
\end{proof}

\noindent\emph{Proof of Theorem~\ref{thm:skp-eqv-class}:}
Fix $k\in\{0,1,\ldots,n-1\}$ and an equivalence class $T$ of $\sim$.
Every $g\in T$ belongs to $S$ and therefore satisfies $g_0\leq c$,
as required by (A0). Since applying $C$ decreases area, $g$ 
satisfies~(A1$'$) and~(A2$'''$) iff $g$ is the least object in $T$
(relative to $\succeq$) such that $\pos(g)>k$. 
Because $T$ is finite (Lemma~\ref{lem:eqv-fin})
and contains a Dyck object $g^+$ with $\pos(g^+)=n>k$
(Lemma~\ref{lem:dyck-rep}), there exists a unique least $g\in T$ 
with $\pos(g)>k$. This $g$ is the unique
$k$-skeletal area vector in $T$. 

\section{Skeletal Functions}
\label{sec:skel-fns}

\subsection{Representing Functions as Labeled Paths}
\label{subsec:rep-fn-path}

We continue to assume $\mcG$ is an additive subgroup of $\R$
and $n$ is a fixed positive integer. Let $[n]=\{1,2,\ldots,n\}$,
and let $F_n=F_n(\mcG)$ be the set of all functions $f:[n]\rightarrow\mcG$.
For each such function $f$, we define the (unlabeled) \emph{path of $f$}
to be $\pi(f)=\{(x_i,i):0\leq i<n\}$, where $x_0,x_1,\ldots,x_{n-1}$
is the list of function values $f(1),f(2),\ldots,f(n)$ sorted into
weakly increasing order.  The \emph{labeled path of $f$} is the ordered pair 
$(\pi(f),w)$, where $w=(w_0,w_1,\ldots,w_{n-1})$ is the unique rearrangement
of $1,2,\ldots,n$ such that $f(w_i)=x_i$ for all $i$, and whenever
$x_i=x_{i+1}$, we have $w_i<w_{i+1}$. We call $w$ the \emph{label sequence}
for $f$.  Informally, we draw the labeled path for $f$ as follows.
Put a north step labeled $a$ on the line $x=b$ whenever $f(a)=b$.
Arrange these north steps at different heights so that the $x$-coordinates
weakly increase as we move up the figure, producing a path of height $n$.
For north steps on the same line $x=b$, make their
labels increase reading from bottom to top.  This process defines a bijection
from $F_n(\mcG)$ to the set of labeled paths where all $x$-coordinates
belong to $\mcG$. The inverse bijection maps a labeled path to the function 
$f$ such that $f(j)$ is the $x$-coordinate of the north step with label $j$.

\begin{example}\label{ex:fn-to-path}
Let $\mcG=\Z$, $n=12$, $c=6$, and $m=2$. Consider the function $f$ with
values shown in Table~\ref{tab:ex-fn}.
Figure~\ref{fig:fn-to-path} shows the labeled path for $f$.
The area vector for the unlabeled path is 
$$g=(5,7,9,11,10,10,6,8,6,8,10,11).$$
The label word of $f$ is $w=(4,7,8,11,1,3,2,5,6,9,12,10)$.
\begin{table}[h]
  \centering
  \caption{Function analyzed in Example~\ref{ex:fn-to-path} and illustrated in 
Figure~\ref{fig:fn-to-path}.}
  \begin{tabular}{ccccccccccccc}\toprule
    $a$  & 1 & 2 & 3 & 4 & 5 & 6 & 7 & 8 & 9 & 10 & 11 & 12 \\\midrule
    $f(a)$ & 4 &12 & 6 & 1 &12 &16 & 1 & 1 &16 & 17 &  1 & 16 \\\bottomrule
  \end{tabular}
\label{tab:ex-fn}
\end{table}
\end{example}
\begin{figure}[h]
\begin{center}
  \includegraphics[width=0.9\linewidth]{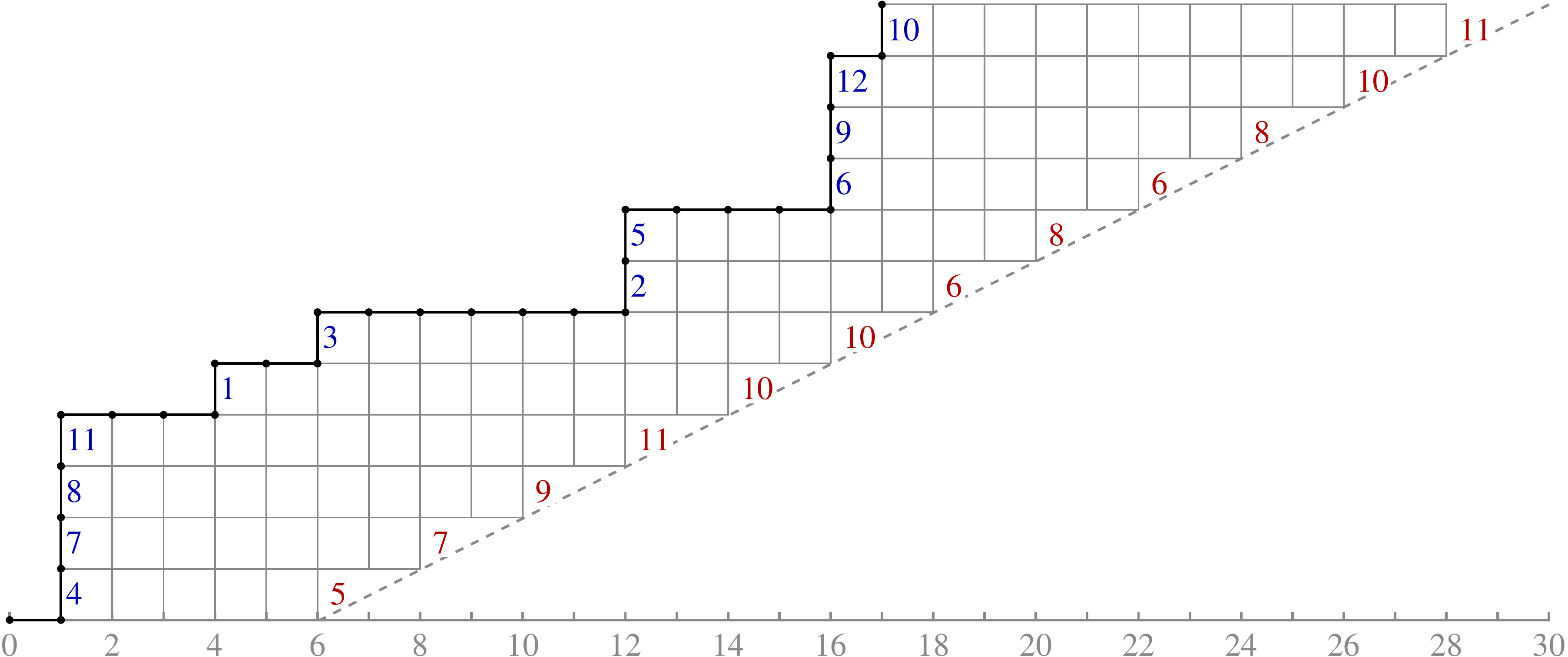}
  \caption{Labeled path for the function in
    Example~\ref{ex:fn-to-path}. Labels are placed in blue to the left
    of each north step. Entries of the $g$-vector are listed in each
    row in red along the line $x=2y+6$.}
\label{fig:fn-to-path}
\end{center}
\end{figure}

For any path $\pi=\{(x_i,i)\}\in P_n$, a \emph{run of north steps} 
in $\pi$ is a maximal interval $\{i,i+1,\ldots,j\}$ of
consecutive indices such that $x_i=x_{i+1}=\cdots=x_j$. The \emph{length}
of this run is $j-i+1$, which is the number of north steps of $\pi$ lying
on the line $x=x_i$. Let $\run(\pi)$
be the multiset consisting of the lengths of all runs of north steps in $\pi$.
Suppose $\pi$ has area vector $G(\pi)=(g_0,g_1,\ldots,g_{n-1})$.
By definition of $G$, $x_i=x_{i+1}$ iff $g_{i+1}=g_i+m$.
A \emph{run} in an area vector $g$ is a maximal
subsequence of consecutive entries in which each successive entry exceeds
the previous one by $m$. Let $\run(g)$ be the multiset of run lengths
in $g$. For all $\pi\in P_n$, we have $\run(\pi)=\run(G(\pi))$.

An unlabeled path $\pi\in P_n$ may have the form $\pi(f)$ for
several different functions $f\in F_n$. The run structure of $\pi$
determines how many such $f$ there are, as follows.

\begin{prop}
Suppose $\pi\in P_n$ has $\run(\pi)=[r_1,r_2,\ldots,r_a]$. 
The number of $f\in F_n$ with $\pi(f)=\pi$ is the multinomial coefficient 
$\binom{n}{r_1,r_2,\ldots,r_a}$. 
\end{prop}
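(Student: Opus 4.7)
The plan is to observe that the data of $\pi(f)$ is equivalent to the multiset of values taken by $f$, and then count functions whose value multiset matches the one determined by the run structure.

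First I would note that, by definition, $\pi(f)=\{(x_i,i):0\leq i<n\}$ records the weakly increasing rearrangement $x_0\leq x_1\leq\cdots\leq x_{n-1}$ of $f(1),\ldots,f(n)$. Hence $\pi(f)=\pi(f')$ if and only if $f$ and $f'$ take the same multiset of values with the same multiplicities. In particular, $\pi$ uniquely determines distinct real numbers $v_1<v_2<\cdots<v_a$ (the $x$-coordinates on which the north steps lie) together with the multiplicities $r_1,r_2,\ldots,r_a$ with which they occur, where $r_j$ is the number of indices $i$ with $x_i=v_j$; these are exactly the entries of $\run(\pi)$ (in the order dictated by the $v_j$).

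Next I would count. A function $f:[n]\to\mcG$ satisfies $\pi(f)=\pi$ if and only if the multiset $\{f(1),\ldots,f(n)\}$ equals $\{v_1^{r_1},\ldots,v_a^{r_a}\}$. Specifying such an $f$ is the same as choosing an ordered partition $([n]=A_1\sqcup A_2\sqcup\cdots\sqcup A_a)$ with $|A_j|=r_j$ and declaring $f(A_j)=\{v_j\}$. The number of such ordered partitions is
\[
\binom{n}{r_1,r_2,\ldots,r_a}=\frac{n!}{r_1!\,r_2!\cdots r_a!},
\]
which is the claimed count.

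The only potential subtlety is the sensitivity of the multiset $\run(\pi)$ to ordering conventions; since the multinomial coefficient is invariant under permutation of its lower indices, it does not matter in which order one lists the run lengths. No serious obstacle is expected: the proof is a direct unwinding of the definition of $\pi(f)$ combined with the standard combinatorial interpretation of multinomial coefficients as counting functions with a prescribed value multiset. The same argument would recover the formula from the explicit description of the label sequence $w$ in Section~\ref{subsec:rep-fn-path}, since choosing $w$ amounts to choosing which labels go into each of the $a$ fibers, with the internal order on each fiber forced to be increasing.
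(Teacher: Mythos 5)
Your proof is correct and follows essentially the same route as the paper's: both reduce the condition $\pi(f)=\pi$ to the statement that the value multiset of $f$ equals the multiset determined by the runs, and then invoke the standard multinomial count (the paper phrases it as counting rearrangements of a word, you as counting ordered set partitions into fibers, which are equivalent). No gaps.
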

\begin{proof}
Choose distinct $z_1,z_2,\ldots,z_a$ such that $\pi$ has $r_i$ north steps 
with $x$-coordinate $z_i$. A function $f\in F_n$ has $\pi(f)=\pi$
iff the word $f(1),f(2),\ldots,f(n)$ is a rearrangement 
of $r_1$ copies of $z_1$, $r_2$ copies of $z_2$, and so on.
The number of such rearrangements is $\binom{n}{r_1,r_2,\ldots,r_a}$.
\end{proof}

\begin{example}
The path $\pi$ shown in Figure~\ref{fig:fn-to-path} 
has run multiset $\run(\pi)=[4,1,1,2,3,1]$. This path is $\pi(f)$
for $\binom{12}{4,1,1,2,3,1}=1663200$ choices of $f\in F_n$.
The area vector $g=G(\pi)$ has the same run multiset as $\pi$. 
We write $g=(\underline{5,7,9,11},\underline{10\zcomm},\underline{10\zcomm},
  \underline{6,8},\underline{6,8,10},\underline{11\zcomm}),$
where entries in the same run are underlined.
\end{example}

\subsection{Main Result for Skeletal Functions}
\label{subsec:skel-fn}

A function $f\in F_n$ is called \emph{$k$-skeletal}
(for parameters $c$ and $m$) iff the unlabeled path $\pi(f)$ is $k$-skeletal.
Similarly, a labeled path $(\pi,w)$ is $k$-skeletal iff $\pi$ is $k$-skeletal.
Let $\skf_k$ be the set of $k$-skeletal functions in $F_n$ with
parameters $c$ and $m$ and values in $\mcG$.

\begin{theorem}\label{thm:main-skf}
(a) For all $k,k'\in\{0,1,\ldots,n-1\}$, there is a canonical bijection
from $\skf_k$ to $\skf_{k'}$. 
(b) When $\mcG=\Z$, for all $k\in\{0,1,\ldots,n-1\}$, we have
$|\skf_k|=|\skf_{n-1}|=c(mn+c)^{n-1}$. 
\end{theorem}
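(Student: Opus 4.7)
The plan is to prove part (a) by lifting the bijection on area vectors from Theorem~\ref{thm:main-skp} to a bijection on labeled paths, and then to deduce part (b) by identifying $\skf_{n-1}$ (when $\mcG=\Z$) with a classical family of trapezoidal parking functions.

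The structural fact driving part (a) will be that equivalent area vectors in $S$ have the same multiset of runs, organized in cyclically rotated order. Concretely, suppose $g,g'\in S$ with $g'=C^p(g)$, and write $p=en+r$ with $0\le r<n$. When $r=0$, we have $g'_i=g_i-ec$ for all $i$, so the consecutive differences (and hence the runs) of $g$ and $g'$ are identical. When $0<r<n$, a term-by-term comparison shows that the list of consecutive differences of $g'$ coincides with that of $g$ except that the transition $g_r-g_{r-1}$ is excised and a new \emph{wrap} transition $(g_0-(e+1)c)-(g_{n-1}-ec)=g_0-c-g_{n-1}$ appears at position $(n-r-1)\to(n-r)$. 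Both transitions are strictly less than $m$: the wrap value satisfies $g_0-c-g_{n-1}\le -g_{n-1}<0\le m$ because $g\in S$, while $g_r-g_{r-1}<0\le m$ by Lemma~\ref{lem:in-S}(b) applied to $g'\in S$ (which gives $g_{r-1}>(e+1)c\ge g_r$). Hence both transitions mark run boundaries, and the runs of $g'$ are precisely the runs of $g$, cyclically rotated so that the run of $g$ beginning at position $r$ appears first in $g'$.

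From this, the canonical bijection $\skf_k\to\skf_{k'}$ is cyclic rotation of labels synchronized with the cyclic rotation of the area vector. Given $f\in\skf_k$ with label sequence $w=(w_0,\ldots,w_{n-1})$, form $g=G(\pi(f))$, use Theorem~\ref{thm:skp-eqv-class} to pick the unique $k'$-skeletal $g'=C^p(g)$ in the class of $g$, set $r=p\bmod n$, and assign the rotated label sequence $w'=(w_r,\ldots,w_{n-1},w_0,\ldots,w_{r-1})$ to $G^{-1}(g')$. The run correspondence guarantees that each run of $g'$ occupies a block of positions whose labels form a contiguous, increasing block of $w$, so $w'$ is automatically a valid label sequence for $g'$, yielding $f'\in\skf_{k'}$. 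Exchanging the roles of $k$ and $k'$ supplies the inverse map. The most delicate step here is the term-by-term transition analysis when $e\ge 1$, where one must be careful that multiple cycles around do not spoil the bookkeeping; fortunately, the inequalities in Lemma~\ref{lem:in-S} are exactly what is needed.

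For part (b), part (a) reduces the problem to enumerating $\skf_{n-1}$ when $\mcG=\Z$. Unpacking definitions, an $f\in\skf_{n-1}$ is a function $[n]\to\Z_{\ge 0}$ whose sorted values $x_0\le\cdots\le x_{n-1}$ satisfy $x_i\le mi+c-1$ for every $i$, i.e., an $(m,c)$-trapezoidal parking function. The count $c(mn+c)^{n-1}$ is the standard generalization of Pollak's circular argument: view arbitrary functions $[n]\to\Z/(mn+c)\Z$ as cars of length $m$ parking on a cyclic lot of $mn+c$ spots, observe that every such function parks legally around the cycle and leaves exactly $c$ empty spots whose location is equidistributed by rotational symmetry, and conclude that a fraction $c/(mn+c)$ of the $(mn+c)^n$ functions park without wrapping. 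Combined with part (a), this yields $|\skf_k|=c(mn+c)^{n-1}$ for every $k\in\{0,1,\ldots,n-1\}$.
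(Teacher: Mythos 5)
Your proposal is correct and follows essentially the same route as the paper: establish that the cycling operator $C$ preserves the run multiset of area vectors in $S$ (the paper's Lemma~\ref{lem:run}), then lift the Theorem~\ref{thm:main-skp} bijection to labeled paths by cyclically rotating the label sequence in step with the area vector, and finally identify $\skf_{n-1}$ with trapezoidal parking functions for part (b). The only differences are minor: you verify run preservation by a single direct comparison of $g$ and $C^p(g)$ via Lemma~\ref{lem:in-S} rather than the paper's induction through consecutive elements of the equivalence class, and you sketch the Pollak-style cycle-lemma count where the paper simply cites it.
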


The equality $|\skf_{n-1}|=c(mn+c)^{n-1}$ in~(b) 
is a classical result (compare to the proof of Corollary~\ref{cor:main-enum}). 
Specifically, $\skf_{n-1}$ is the set of
\emph{trapezoidal parking functions}, which are
functions $f$ whose path $\pi(f)$ stays in the trapezoidal region
$\{(x,y)\in\R^2: 0\leq y\leq n, 0\leq x\leq my+c-1\}$. The stated
formula for the number of such functions can be found, for example,
in~\cite[Theorem 1.2]{rstan-park}.

\subsection{Run Structure of Equivalence Classes}
\label{subsec:run-equiv}

The proof of Theorem~\ref{thm:main-skf} uses the set $S$ 
and equivalence relation $\sim$ from Section~\ref{subsec:cycle-area-vec}.

\begin{lemma}\label{lem:run}
Let $T$ be an equivalence class of $\sim$ in $S$ with unique Dyck
representative $g^+$. For every $h\in T$, $\run(h)=\run(g^+)$.
\end{lemma}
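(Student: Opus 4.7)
The plan is to exploit the cyclic structure underlying the operator $C$. For any $h = (h_0, \ldots, h_{n-1}) \in S$, I define the \emph{cyclic difference sequence} $(d^h_1, d^h_2, \ldots, d^h_n)$ by $d^h_i = h_i - h_{i-1}$ for $1 \leq i \leq n-1$ and $d^h_n = (h_0 - c) - h_{n-1}$. A direct calculation gives
\[
(d^{C(h)}_1, d^{C(h)}_2, \ldots, d^{C(h)}_n) = (d^h_2, d^h_3, \ldots, d^h_n, d^h_1),
\]
so applying $C$ cyclically left-shifts the cyclic difference sequence by one. Hence, as $h$ varies over $T$, this cyclic sequence is invariant up to cyclic rotation.

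Next, I observe that $d^h_n < m$ for every $h \in S$: the conditions $h_0 \leq c$ and $h_{n-1} > 0$ give $d^h_n = h_0 - h_{n-1} - c \leq -h_{n-1} < 0 \leq m$. Combined with the area vector inequalities $d^h_i \leq m$ for $1 \leq i \leq n-1$, every cyclic difference is at most $m$, with the wraparound difference strictly less than $m$.

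The crux is to interpret $\run(h)$ via the cyclic sequence. I declare a cyclic position $i \in \Z/n\Z$ to be a \emph{break} if $d^h_i < m$ (with $d^h_0 := d^h_n$), and define the \emph{cyclic runs} to be the maximal cyclic arcs of positions on which every internal difference equals $m$. Since $d^h_n < m$ creates a break between positions $n-1$ and $0$, no cyclic run straddles the cut, so the cyclic runs coincide as subsets of $\{0, 1, \ldots, n-1\}$ with the linear runs of $h$. Thus $\run(h)$ equals the cyclic run multiset, which is invariant under cyclic rotation of the difference sequence and therefore invariant on $T$, giving $\run(h) = \run(g^+)$. The main obstacle is making the correspondence between linear and cyclic runs precise; it hinges on the strict inequality $d^h_n < m$ ensuring that the linear boundary at position $n-1$ and the linear start at position $0$ both correspond to the same cyclic break.
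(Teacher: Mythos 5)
Your proof is correct, and it takes a genuinely different route from the paper's. The paper orders the equivalence class $T$ by decreasing area and inducts along consecutive pairs: it identifies the next element of $S$ after $g$ as $C^{s+t}(g)$, where $s$ counts the leading entries $\leq c$ and $t$ the following entries $>c$, and then checks by hand that the cut point satisfies $g_{s+t}\neq g_{s+t-1}+m$ and that the wraparound does not merge two runs. You instead package everything into a single rotation-invariant: the cyclic difference sequence, which $C$ merely rotates, so you never need to determine which powers of $C$ land in $S$ --- only the two endpoints $g^+$ and $h$ need to lie in $S$ for the linear runs to be read off from the cyclic breaks. Both arguments ultimately rest on the same key fact, that the wraparound gap $(h_0-c)-h_{n-1}$ is strictly less than $m$ for every $h\in S$; the paper imports this as the strict inequality recorded at the end of the proof of Lemma~\ref{lem:area-vec}, while you rederive it directly from $h_0\leq c$, $h_{n-1}>0$, and $m\geq 0$. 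Your version is cleaner and more self-contained. What the paper's version buys in exchange is the explicit step-by-step passage between consecutive elements of $T$, which is reused verbatim in Section~\ref{subsec:prove-thm-skf} to cycle the label sequences in the proof of Theorem~\ref{thm:main-skf}; if one adopted your proof, that description of consecutive steps would still need to be established separately for the labeled-path bijection.
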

\begin{proof}
Let $T=\{g^+\succ g^1\succ g^2\succ\cdots\}$ be the finite sequence of all 
area vectors in $T$ ordered by decreasing area. 
By induction, it suffices to show that
if $g,h$ are two consecutive objects in this sequence, then $\run(g)=\run(h)$.

Let $g=(g_0,g_1,\ldots,g_{n-1})\in T$. To reach $h$ from $g$, we must apply
$C$ one or more times. Choose the largest $s$ so that the first $s$
entries of $g$ are $\leq c$;
we have $s\geq 1$ since $g_0\leq c$. When $C$ cycles these entries to the
end, they each become $\leq 0$. So $C^1(g),\ldots,C^s(g)$ are not in $S$.
If $s=n$, then applying $C$ additional times never leads to an object in $S$,
contradicting the fact that $h$ follows $g$ in the sequence. So $0<s<n$.
Next, choose the largest $t$ so that the $t$ entries of $g$ 
scanning forward from $g_s$ are $>c$. We must have $t>0$ because $s<n$.
Since the first entry of any vector in $S$ must be $\leq c$, we must cycle 
all $t$ of these entries to the end to reach the next object in $S$.
Thus, $h=C^{s+t}(g)$ where $0<s+t\leq n$. In the case $s+t=n$,
$h=(g_0-c,g_1-c,\ldots,g_{n-1}-c)$, which certainly has the same run 
multiset as $g$. In the case $s+t<n$, 
\begin{equation}\label{eq:cycled-runs}
 h=(g_{s+t},\ldots,g_{n-1};g_0-c,\ldots,g_{s+t-1}-c). 
\end{equation}
The key point is that $g_{s+t-1}>c$ but $g_{s+t}\leq c$,
so $g_{s+t}$ cannot be $g_{s+t-1}+m$. So $(g_0,\ldots,g_{s+t-1})$
is a union of certain runs of $g$, and the run lengths in this
part are unaffected when we subtract $c$ from every entry.
Similarly, $(g_{s+t},\ldots,g_{n-1})$ contributes the same run lengths
to $g$ and to $h$, once we notice (using the last sentence of the
proof of Lemma~\ref{lem:area-vec}) that $g_{n-1}$ and $g_0-c$ 
cannot belong to the same run in $h$. So $\run(h)=\run(g)$.
\end{proof}

\begin{example}
In Example~\ref{ex:find-pos}, all vectors $g^+,g',g,g''$ in the 
equivalence class have run multiset $[5,1,3,3,1,1]$. 
\end{example}

\begin{example}\label{ex:fn-to-path2}
Continuing Example~\ref{ex:fn-to-path}, 
the path in Figure~\ref{fig:fn-to-path} has Dyck area vector
\[ g=g^+=(\underline{5,7,9,11},\underline{10\zcomm},\underline{10\zcomm},
  \underline{6,8},\underline{6,8,10},\underline{11\zcomm})\in S. \] To
  reach the next element of $S$ equivalent to $g$, we must cycle the
  initial entries less than or equal to $c$ (namely the first entry $5$)
  and continue to cycle everything after that exceeding $c$ (namely
  $7$, $9$, $11$, $10$ and $10$). This leads to
\[ g'=(\underline{6,8},\underline{6,8,10},\underline{11\zcomm},
 \underline{-1,1,3,5},\underline{4\zcomm},\underline{4\zcomm}). \]
Repeating this process, we reach two more area vectors in the equivalence class:
\[ g''=(\underline{6,8,10},\underline{11\zcomm},
 \underline{-1,1,3,5},\underline{4\zcomm},\underline{4\zcomm},\underline{0,2}); \]
\[ g'''=(\underline{-1,1,3,5},\underline{4\zcomm},\underline{4\zcomm},\underline{0,2},
 \underline{0,2,4},\underline{5\zcomm}). \] We have $\pos(g)=12$,
 $\pos(g')=5$, $\pos(g'')=1$, $\pos(g''')=3$, and all these vectors
 have run multiset $[4,1,1,2,3,1]$.  So $g$ is $k$-skeletal for $5\leq
 k<12$, $g'$ is $k$-skeletal for $3\leq k<5$, $g''$ is not
 $k$-skeletal for any $k$, and $g'''$ is $k$-skeletal for $0\leq
 k<3$.  Note that exactly one of the elements $\{g,g',g'',g'''\}$ is
 $k$-skeletal for each $k$ in the range $0\leq k\leq 11$,
 as assured by Theorem~\ref{thm:main-skf}(a).
\end{example}

\subsection{Proof of Theorem~\ref{thm:main-skf}.}
\label{subsec:prove-thm-skf}

Fix $k,k'\in\{0,1,\ldots,n-1\}$ and $f\in\skf_k=\skf_k(\mcG;c,m)$. 
Let $g$ be the area vector of $\pi(f)$,
and let $g'$ be the unique $k'$-skeletal area vector with $g'\sim g$
(Theorem~\ref{thm:main-skp}).
We can go from $g$ to $g'$ by stepping up or down through the
equivalence class of $g$ using powers of $C^{-1}$ or $C$,
as described in the proof of Lemma~\ref{lem:run}.
Each step cycles the area vector in a way that preserves the run multiset.
So we can cycle the labels of the north steps in the same way without violating
the rules for labeled paths. For example, suppose we start with
$g=(g_0,g_1,\ldots,g_{n-1})$ and label sequence $w=(w_0,w_1,\ldots,w_{n-1})$. 
If the next step replaces $g$ by the $h$ shown in~\eqref{eq:cycled-runs},
then we replace the label sequence $w$ by 
$(w_{s+t},\ldots,w_{n-1};w_0,\ldots,w_{s+t-1})$. 
We eventually reach $g'$ and a new label sequence $w'$, 
which corresponds to some function $f'\in\skf_{k'}$. 
The map sending $f$ to $f'$ is the required bijection 
from $\skf_k$ to $\skf_{k'}$.

\begin{example}\label{ex:fn2}
Let $f$ be the function shown in Table~\ref{tab:ex-fn}.
The area vector of $\pi(f)$ is the vector $g$ from 
Example~\ref{ex:fn-to-path2}, which is $7$-skeletal,
so $f\in\skf_7$. Let us find the image of $f$ under
the bijection from $\skf_7$ to $\skf_4$.  The label sequence of $f$ is 
$w=(\underline{4,7,8,11},\underline{1\zcomm},\underline{3\zcomm},
     \underline{2,5},\underline{6,9,12},\underline{10\zcomm}).$
When $g$ cycles to $g'$, the label sequence cycles to
$w'=(\underline{2,5},\underline{6,9,12},\underline{10\zcomm},
\underline{4,7,8,11},\underline{1\zcomm},\underline{3\zcomm}).$
The labeled path encoded by $g'$ and $w'$ is shown in 
Figure~\ref{fig:fn-to-path2}. The corresponding $4$-skeletal function
$f'$ has the values shown in Table~\ref{tab:ex-fn2}.
\end{example}
\begin{table}[h]
  \centering
  \caption{Function analyzed in Example~\ref{ex:fn2} and illustrated in Figure~\ref{fig:fn-to-path2}.}
  \begin{tabular}{ccccccccccccc}\toprule
    $a$    & 1 & 2 & 3 & 4 & 5 & 6 & 7 & 8 & 9 & 10 & 11 & 12 \\\midrule
    $f'(a)$&22 & 0 &24 & 19& 0 & 4 & 19& 19& 4 &  5 &  19&  4 \\\bottomrule
  \end{tabular}
  \label{tab:ex-fn2}
\end{table}
\begin{figure}[h]
\begin{center}
  \includegraphics[width=0.9\linewidth]{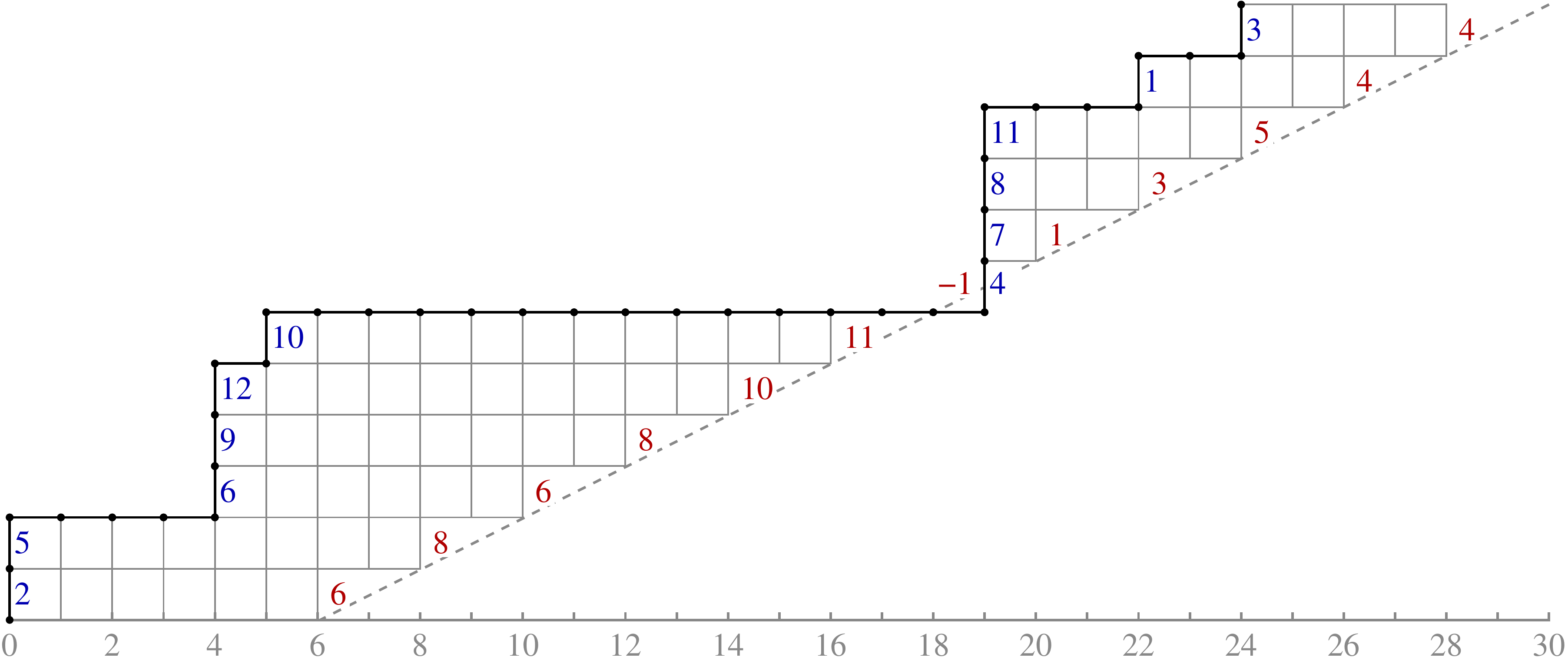}
\caption{Labeled path for the function $f'$ in Example~\ref{ex:fn-to-path2}.}
\label{fig:fn-to-path2}
\end{center}
\end{figure}

\begin{remark}
The symmetric group $\mathfrak{S}_n$ acts on each set $\skf_k$ by
permuting the inputs: $\sigma\cdot f=f\circ\sigma^{-1}$ for
$\sigma\in\mathfrak{S}_n$ and $f\in\skf_k$. To see that $\sigma\cdot f$
does belong to $\skf_k$, note that $f$ and $\sigma\cdot f$ have the
same multiset of output values. So the (unlabeled) path of $\sigma\cdot f$ 
equals the path of $f$, which is $k$-skeletal by assumption on $f$.
\end{remark}

\section{$\mcG$-Valued Chip Firing and Skeletal Functions}
\label{sec:skel-chip}

This section introduces a generalization of chip firing relative to an
additive subgroup $\mcG$ of $\R$ (see~\cite{levine-peres}). We first
explain how this framework allows for a chip-firing interpretation of
the skeletal functions from Section~\ref{sec:skel-fns}. Then we
discuss how the general theory from Section~\ref{subsec:motivation}
can be adapted to the setting of chip configurations with values in
$\mcG$.

We continue to assume this setup: $\mcG$ is a fixed additive subgroup of $\R$,
$n$ is a positive integer, and $c,m\in\mcG$ are parameters
with $c>0$ and $m\geq 0$. We study the following chip-firing
model built from these parameters.  Let $K_{n+1}$ be the complete graph 
with vertex set $\{0,1,2,\ldots,n\}$, where $0$ is a special vertex called
the \emph{sink}.
 For each $i\neq j$ between $1$ and $n$, the edge from $i$ to $j$
has \emph{capacity} $m$. For each $i$ between $1$ and $n$,
the edge from $i$ to $0$ has \emph{capacity} $c$.
A \emph{chip configuration} on $K_{n+1}$ with values in $\mcG$ 
is a function $\ch:[n]\rightarrow\mcG$, where $[n]=\{1,2,\ldots,n\}$.
We think of $\ch(i)$ as
the chip count at vertex $i$, which might be negative or non-integral
(depending on $\mcG$). Our chip configurations do not record the chip
count at the sink vertex $0$. The chip configuration $\ch$
is \emph{nonnegative} (written $\ch\geq 0$) 
iff $\ch(i)\geq 0$ for all $i\in [n]$.

We introduce an operation $\phi_i$ on chip configurations called
\emph{firing at vertex $i$}. By definition, $\phi_i(\ch)$ is the configuration
obtained from $\ch$ by decreasing $\ch(i)$ by $m(n-1)+c$
and increasing $\ch(j)$ by $m$ for all $j\neq i$ in $[n]$.
Intuitively, when vertex $i$ fires, it sends $m$ chips along each of
the $n-1$ edges to other $j\in [n]$, and it sends $c$ chips to the sink.
We say a vertex $i\in [n]$ can \emph{legally fire} iff $\ch(i)\geq m(n-1)+c$.
If $\ch\geq 0$ and vertex $i$ can legally fire, then $\phi_i(\ch)\geq 0$.

More generally, suppose $S$ is a nonempty subset of $[n]$ of size $f$.
The operation $\phi_S$ (\emph{firing at vertex set $S$}) acts on any $\ch$ 
by decreasing $\ch(i)$ by $m(n-f)+c$ for each $i\in S$ and
increasing $\ch(j)$ by $mf$ for all $j\in [n]\setminus S$.
We say $S$ can \emph{legally fire} in configuration $\ch$ 
iff $\ch(i)\geq m(n-f)+c$ for all $i\in S$. 
If $\ch\geq 0$ and subset $S$ can legally fire, then $\phi_S(\ch)\geq 0$.

Next, let $T$ be a nonempty subset of $[n]$ of size $p$.
The operation $\beta_T$ (\emph{borrowing at vertex set $T$}) acts on any $\ch$ 
by increasing $\ch(i)$ by $m(n-p)+c$ for each $i\in T$ and
decreasing $\ch(j)$ by $mp$ for all $j\in [n]\setminus T$.
Evidently, $\beta_T$ is the two-sided inverse of $\phi_T$
when acting on all (not necessarily nonnegative) chip configurations.
We say $T$ can \emph{legally borrow} in configuration $\ch$ 
iff $\ch(j)\geq mp$ for all $j\in [n]\setminus T$.
If $\ch\geq 0$ and subset $T$ can legally borrow, then $\beta_T(\ch)\geq 0$.

A chip configuration $\ch$ is a function from $[n]$ to $\mcG$,
so we can represent $\ch$ as a labeled path as explained 
in~\S\ref{subsec:rep-fn-path}. We use the following terminology in
this setting. Given $v,z\in [n]$, say that $v$ is \emph{poorer} than $z$ 
and $z$ is \emph{richer} than $v$ if $\ch(v)<\ch(z)$. To make
the labeled path for $\ch$, place vertices in $n$ rows 
with poorer vertices occupying lower rows.
Each vertex $v\in [n]$ with chip count $\ch(v)$ labels
a north step on the line $x=\ch(v)$. If several vertices 
have equal chip count, arrange them in the same column with vertex
labels increasing from bottom to top. Finally, connect the $n$
north steps with east steps to get a path proceeding northeast.
If $\ch\geq 0$, then this path may start at the origin.
For any chip configuration $\ch$, let $\pi(\ch)=\{(x_i(\ch),i)\}$
be the associated path with label sequence $w(\ch)=(w_0,\ldots,w_{n-1})$;
so $\ch(w_i)=x_i(\ch)$ for all $i$.
Let $G(\ch)$ be the area vector of $\pi(\ch)$
(computed relative to the reference line $x=my+c$).

\begin{example}\label{ex:chip}
Let $n=6$, $m=2$, $c=4$, and $\ch=(\ch(1),\ldots,\ch(6))=(4,1,5,5,14,8)$. Then
$\pi(\ch)=\{(1,0),(4,1),(5,2),(5,3),(8,4),(14,5)\}$, $w(\ch)=(2,1,3,4,6,5)$,
and $G(\ch)=(3,2,3,5,4,0)$. We may identify $\pi(\ch)$ with the lattice
path $\E\N\E\E\E\N\E\N\N\E\E\E\N\E\E\E\E\E\E\N$, where
the north steps are labeled $2,1,3,4,6,5$ from bottom to top.
In $\ch$, vertex 5 can fire since it has 14 chips. 
Firing vertex $5$ would change $\ch$ to $\phi_5(\ch)=(6,3,7,7,0,10)$.
In $\ch$, no $2$-element subset can legally fire since both vertices
in the subset would need at least $12$ chips. Similarly,
for all $f\geq 2$, no $f$-element subset can legally fire in $\ch$. 
\end{example}

\subsection{Skeletal Chip Configurations}
\label{subsec:skel-chip}

For $k\in\{0,1,\ldots,n-1\}$, a chip configuration $\ch$ is called
\emph{$k$-skeletal} iff these conditions hold:
\begin{itemize}
\item[(C0)] $\ch\geq 0$.
\item[(C1)] For all $S\subseteq [n]$, if $0<|S|\leq k+1$, then
 $S$ cannot legally fire in configuration $\ch$.
\item[(C2)] For all nonempty $T\subseteq [n]$, if $T$ can legally
borrow in configuration $\ch$, then there exists $S\subseteq [n]$
such that $0<|S|\leq k+1$ and $S$ can legally fire in configuration
$\beta_T(\ch)$.
\end{itemize}
Condition~(C2) says that~(C1) fails for every configuration reachable
from $\ch$ by a legal borrow move.  \footnote{Inspired by
conversations with the first author and Sam Hopkins,
Dochtermann~\cite{Dochtermann} investigated functions which satisfy
(C0) and (C1), but not the magic (C2).  When one drops condition (C2)
the resulting objects are determined by monomial ideals and thus
amenable to techniques from commutative algebra.  This line of inquiry
was further pursued by Dochtermann \& King~\cite{dochtermann2021trees}
as well as by Kumar, Lathar, and Roy~\cite{kumar2021skeleton,
  kumar2022standard, roy2020standard}.}  Let $\skc_k$ be the set of
$k$-skeletal chip configurations.

\begin{theorem}\label{thm:main-skc}
For all $k$, $\skc_k=\skf_k$. Thus, $|\skc_k|$ is independent of $k$.
In the case $\mcG=\Z$, $|\skc_k|=c(mn+c)^{n-1}$.
\end{theorem}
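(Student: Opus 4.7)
The plan is to prove $\skc_k=\skf_k$ by matching the three chip-firing conditions (C0), (C1), (C2) with the three area-vector conditions (A0), (A1), (A2) on a fixed configuration $\ch$ and its area vector $g=G(\pi(\ch))$; the cardinality assertions then follow immediately from Theorem~\ref{thm:main-skf}. Throughout I will use the identity $x_i=mi+c-g_i$ relating sorted chip counts to area-vector entries. The first two equivalences are routine: $\ch\geq 0$ is the same as $x_0\geq 0$, i.e., $g_0\leq c$, which is (A0); and a size-$f$ subset can legally fire iff all its vertices have $\geq m(n-f)+c$ chips, so the $f$ richest vertices form the easiest such subset, making the existence of a legal size-$f$ fire equivalent to $x_{n-f}\geq m(n-f)+c$, i.e., $g_{n-f}\leq 0$. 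Quantifying the negation over $f\in\{1,\ldots,k+1\}$ converts (C1) into (A1).

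The centerpiece will be a lemma on the \emph{canonical borrow} $T_p=\{w_0,\ldots,w_{p-1}\}$ at the $p$ poorest vertices: $T_p$ is legal iff $g_p\leq c$ (with $p=n$ always legal), and $G(\beta_{T_p}(\ch))=C^p(g)$. I would verify the lemma by sorting the new chip counts directly---using $g_0\leq c$ and $g_{n-1}>0$ to see that $x_{n-1}-x_0<mn+c$, so that all $p$ promoted vertices sit strictly above all $n-p$ demoted ones---and then substituting into the area-vector definition. With the lemma in hand, the direction (A2)-fails-$\Rightarrow$-(C2)-fails is easy: given a window $g_i,\ldots,g_{i+k}$ of entries $>c$, let $p$ be the smallest index exceeding $i+k$ with $g_p\leq c$, or $p=n$ if the $>c$ run extends to $g_{n-1}$. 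Then $T_p$ is legal and the last $k+1$ entries of $C^p(g)$ are $g_{p-k-1}-c,\ldots,g_{p-1}-c$, all strictly positive since they lie inside the forbidden run, so $\pos(C^p(g))>k$ exhibits a failure of (C2) at $T_p$.

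The hard part will be (A2)$\Rightarrow$(C2), which must quantify over \emph{all} legal borrows $T$, not only canonical ones. For $T=\{w_{i_1},\ldots,w_{i_p}\}$ with $i_1<\cdots<i_p$, a direct computation of the new area vector $g_T=G(\beta_T(\ch))$ yields
\[
g_T(n-p+s)=g_{i_{s+1}}-c-m(i_{s+1}-s),\qquad 0\leq s<p,
\]
and the area-vector inequality $g_j\leq g_i+m(j-i)$ for $j\geq i$, combined with $i_{s+1}\geq s$, gives $g_T(n-p+s)\leq g_s-c=C^p(g)(n-p+s)$. Since $T$ legal forces $T_p$ legal (the complement of $T_p$ has the largest possible minimum chip count), and (A2) directly produces $\pos(C^p(g))\leq k$ via the run-length interpretation developed in Section~\ref{subsec:analyze-conditions}, I then split into two cases. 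If $p\leq k+1$, then $g_T(n-p)\leq g_0-c\leq 0$ places a non-positive entry inside the last $k+1$ positions of $g_T$, forcing $\pos(g_T)\leq k$. If $p\geq k+1$, the last $k+1$ entries of $g_T$ all lie in the top-$p$ range, so the pointwise comparison shows that $\pos(g_T)>k$ would propagate to $\pos(C^p(g))>k$, contradicting the bound just obtained. Either way $\pos(g_T)\leq k$, meaning some subset of size at most $k+1$ fires legally in $\beta_T(\ch)$, confirming (C2). The main obstacle, as indicated, is precisely this pointwise area-vector estimate transferring the canonical bound on $\pos(C^p(g))$ to a bound on $\pos(g_T)$ for arbitrary $T$.
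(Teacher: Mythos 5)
Your proposal is correct, and its overall architecture (translate (C0), (C1), (C2) into (A0), (A1), (A2) via $x_i=mi+c-g_i$, with the canonical-borrow identity $G(\beta_{T_p}(\ch))=C^p(g)$ at the center) matches the paper's. The one step where you genuinely diverge is the reduction from arbitrary legal borrows $T$ to the canonical borrow at the $p$ poorest vertices. The paper never computes $G(\beta_T(\ch))$ for non-canonical $T$; instead it observes that if some $v\in T$ already has $\ch(v)\geq m(p-1)$ chips, then after borrowing the singleton $\{v\}$ can legally fire, so the implication in (C2) holds trivially with $|S|=1$ --- and it checks that this covers every $T$ except the set of $p$ poorest vertices in the case $g_{p-1}>c$ (its cases (ii)--(iv)). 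You instead compute $G(\beta_T(\ch))$ for arbitrary $T=\{w_{i_1},\ldots,w_{i_p}\}$ and prove the entrywise domination $g_T(n-p+s)=g_{i_{s+1}}-c-m(i_{s+1}-s)\leq g_s-c=C^p(g)(n-p+s)$ using $i_{s+1}\geq s$ and the area-vector inequality, so that positivity of the trailing $k+1$ entries of $g_T$ forces the same for $C^p(g)$; I checked this formula and the two-case split on $p\lessgtr k+1$, and they are sound (note that for $p\leq k$ your bound $\pos(C^p(g))\leq k$ really uses (A0), not (A2) alone, but your case analysis invokes $g_0-c\leq 0$ explicitly there, so nothing is missing). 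The paper's singleton trick is shorter and sidesteps the need to re-sort chip counts after a non-canonical borrow; your domination lemma is more computational but also more uniform, and it makes explicit \emph{why} the canonical borrow is the extremal case (it maximizes the resulting area vector entrywise in the promoted positions), which is a reusable quantitative statement that the paper's case analysis does not record.
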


We prove this theorem by showing that Conditions~(C0),~(C1), and~(C2)
on a chip configuration $\ch$ are equivalent to Conditions~(A0), (A1$'$), 
and~(A2$'$) on the area vector $g=G(\ch)$  (see Proposition~\ref{prop:A012'}).
Condition~(C0) for $\ch$, Condition~(P0) for $\pi(\ch)$,
and Condition~(A0) for $g$ are equivalent. To compare Conditions~(C1) 
and~(A1$'$), fix $f$ with $0<f\leq n$.  The following statements are 
equivalent for a configuration $\ch\geq 0$ with area vector $g$:
\begin{itemize}
\item[(a)] 
 There exists an $f$-element subset $S$ of $[n]$ that can legally fire in $\ch$.
\item[(b)] The set of $f$ richest vertices in $[n]$ can legally fire in $\ch$.
\item[(c)] The $f$\nobreakdash-th richest vertex in $\ch$ has at least $m(n-f)+c$ chips.
\item[(d)] $x_{n-f}(\ch)\geq m(n-f)+c$.
\item[(e)] $m(n-f)+c-g_{n-f}\geq m(n-f)+c$. 
\item[(f)] $g_{n-f}\leq 0$.
\end{itemize}
Negating this, no $f$-element subset $S$ of $[n]$ can legally fire 
in configuration $\ch$ iff $g_{n-f}>0$.  Applying this to $f=1,2,\ldots,k+1$, 
we see that Condition~(C1) for $\ch$ is equivalent to Condition~(A1)
for $g$, which is equivalent to~(A1$'$).

\subsection{Analysis of Condition~(C2)}
\label{subsec:analyze-C2}

Continuing the proof of Theorem~\ref{thm:main-skc}, the next step
is to develop a simplified version of Condition~(C2). Let $\ch$ be a chip 
configuration satisfying~(C0) and~(C1), and let $g$ be the associated area 
vector.  For $p\in\{1,2,\ldots,n\}$, we first determine when there exists a
$p$-element subset $T$ of $[n]$ such that $T$ can legally borrow in $\ch$.
In the case $p<n$, the following statements are equivalent:

\begin{itemize}
\item[(a)] There is $T\subseteq [n]$ with $|T|=p$ such that 
$T$ can legally borrow in $\ch$.
\item[(b)] There exist $n-p$ vertices in $[n]$ that each have
 at least $mp$ chips in $\ch$.
\item[(c)] The richest $n-p$ vertices in $\ch$ each have
 at least $mp$ chips.
\item[(d)] The $(p+1)$\nobreakdash-th poorest vertex in $\ch$ has at least $mp$ chips.
\item[(e)] $x_p(\ch)\geq mp$.
\item[(f)] $mp+c-g_p\geq mp$.
\item[(g)] $g_p\leq c$.
\end{itemize}
When $p=n$, the set $T=[n]$ can always legally borrow. So, for all $p$:
\begin{equation}\label{eq:borrow-cond}
\text{(some $p$-element subset $T$ of $[n]$ can legally borrow in $\ch$)
iff ($p=n$ or $g_p\leq c$).}
\end{equation}

\begin{example}
In Example~\ref{ex:chip}, $m=2$ and $\ch$ has area vector $g=(3,2,3,5,4,0)$.
We can find subsets of size $1,2,4,5,6$ that can legally borrow.
But no such subset exists of size $p=3$, since $g_3=5>4=c$.
Indeed, vertices $1,2,3,4$ each have fewer than $6$ chips,
so $T$ would need to include all of these vertices for $\beta_T$
to be legal. But then $T$ has size larger than $3$.
\end{example}

Still holding $p$ fixed,
we can reduce the number of $p$-element subsets $T$ 
that we must consider when checking Condition~(C2). 
This condition involves the following IF-statement: 
``if $\beta_T$ is legal for configuration $\ch$,
 then configuration $\beta_T(\ch)$ has a nonempty subset $S$ of 
 size at most $k+1$ that can legally fire.''
This IF-statement is automatically true in certain cases, listed next.
\begin{itemize}
\item[(i)] In the case $p<n$ and $g_p>c$: 
 $\beta_T$ is not legal (by~\eqref{eq:borrow-cond}), 
so the IF-statement is true.
\item[(ii)] In the case where $\beta_T$ is legal 
(i.e., $p=n$ or $g_p\leq c$) and 
some $v\in T$ has $\ch(v)\geq m(p-1)$: borrowing at $T$ 
increases the chip count at $v$ to be $\geq m(n-1)+c$.
So the IF-statement can be fulfilled by taking $S=\{v\}$.
\item[(iii)] In the case ($p=n$ or $g_p\leq c$) and $g_{p-1}\leq c$:
 the $p$\nobreakdash-th poorest vertex in $[n]$ has chip count
 $x_{p-1}(\ch)=m(p-1)+c-g_{p-1}\geq m(p-1)$.
 As $T$ has size $p$, some vertex in $T$ must have at least $m(p-1)$ chips.
 Thus, the preceding case~(ii) applies, and the IF-statement
 in~(C2) is true.
\item[(iv)] In the case ($p=n$ or $g_p\leq c$) and $g_{p-1}>c$
 and $T$ is not the set $\{w_0,\ldots,w_{p-1}\}$ of $p$ poorest vertices
 in $\ch$: $T$ must contain some $v$ outside this set, so $p<n$, $g_p\leq c$,
 and $\ch(v)\geq \ch(w_p)=x_p(\ch)=mp+c-g_p\geq mp\geq m(p-1)$. 
 So case~(ii) applies, and the IF-statement in~(C2) is true.
\end{itemize}

Removing all these cases, and letting $p$ vary,
we conclude that Condition~(C2) for
configuration $\ch$ is equivalent to the following condition:
\begin{quote}
(C2$'$)~For all $p\in\{1,2,\ldots,n\}$, if ($p=n$ or $g_p\leq c$)
and $g_{p-1}>c$ and $T=\{w_0,\ldots,w_{p-1}\}$, 
then configuration $\beta_T(\ch)$ fails Condition~(C1).
\end{quote}

\subsection{Equivalence of~(C2$'$) and~(A2$'$)}
\label{subsec:equiv-C2-A2}

\begin{lemma}\label{lem:Cp-area-vec}
Suppose $\ch$ satisfies Conditions~(C0) and~(C1), and $p$ and $T$
satisfy the hypothesis of ($C2'$). If $\ch$ has area vector 
$g=(g_0,g_1,\ldots,g_{n-1})$, then $\beta_T(\ch)$ has area vector 
$C^p(g)=(g_p,\ldots,g_{n-1},g_0-c,\ldots,g_{p-1}-c)$.
\end{lemma}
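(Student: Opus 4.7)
The plan is to compute $\beta_T(\ch)$ directly at each vertex $w_i$, determine how the chip values reorder, and then translate the new sorted list back into an area vector via the defining formula $g'_j = mj + c - x'_j$.

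First, I would unpack the borrow operation on the labeled path of $\ch$. Since $T = \{w_0, w_1, \ldots, w_{p-1}\}$ consists of the $p$ poorest vertices and $\ch(w_i) = x_i = mi + c - g_i$, borrowing at $T$ replaces $x_i$ by $x_i + m(n-p) + c$ for $i < p$ and by $x_i - mp$ for $i \geq p$. The question then becomes: in what order do these new values sort?

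Second, I would claim that the new sort order (from poorest to richest, with ties broken by ascending label as in the labeled-path convention) is the cyclic shift $w_p, w_{p+1}, \ldots, w_{n-1}, w_0, w_1, \ldots, w_{p-1}$. Within each of the two blocks the values inherit the weak monotonicity of $x_0 \leq \cdots \leq x_{n-1}$, and any ties within a block are compatible with the label rule because they already were in the original list. The case $p = n$ is immediate because then all chip values shift uniformly by $c$, yielding area vector $C^n(g)$. For $0 < p < n$ the nontrivial point is showing that the largest "down" value $x_{n-1} - mp$ is strictly less than the smallest "up" value $x_0 + m(n-p) + c$; I expect this to be the only real obstacle, and it is where the hypotheses (C0) and (C1) enter. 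Reducing the difference algebraically gives $g_{n-1} - g_0 + m + c$, which is strictly positive because (C0) gives $g_0 \leq c$, (C1) (applied to the singleton subsets, which always fall under the $k \geq 0$ regime) gives $g_{n-1} > 0$, and $m \geq 0$. Strictness also rules out any accidental ties crossing between the blocks, so the proposed sort order is correct as a labeled list, not merely as a multiset.

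Finally, with the new sorted values identified as $x'_j = x_{p+j} - mp$ for $0 \leq j \leq n-p-1$ and $x'_j = x_{j-(n-p)} + m(n-p) + c$ for $n-p \leq j \leq n-1$, I would substitute into $g'_j = mj + c - x'_j$. In the lower range this collapses to $g_{p+j}$, and in the upper range the $m(n-p)$ terms cancel and leave $g_{j-(n-p)} - c$. Reading off the resulting $n$-tuple gives exactly $(g_p, \ldots, g_{n-1}, g_0 - c, \ldots, g_{p-1} - c) = C^p(g)$, completing the proof. The hypothesis $g_p \leq c$ and $g_{p-1} > c$ from (C2$'$) is not actually needed for the identification of the new area vector; it is only needed to know that $\beta_T$ is a legal move in the first place, which is the content in which this lemma will be applied downstream.
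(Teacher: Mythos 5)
Your proposal is correct and follows essentially the same route as the paper's proof: compute the shifted chip counts, show the new poorest-to-richest order is the cyclic shift of the label sequence via a strict separation between the two blocks (using (C0) for $g_0\leq c$ and (C1) for $g_{n-1}>0$), and then substitute into $g'_j=mj+c-x'_j$. The only cosmetic difference is that the paper separates the blocks by bounding all shifted values in each block uniformly ($\geq m(n-p)+c$ versus $<m(n-p-1)+c$) rather than comparing the two extreme entries directly, and your closing observation that the $g_p\leq c$, $g_{p-1}>c$ hypotheses are not needed for the identification itself is consistent with the paper's argument.
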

\begin{proof}
We are assuming $T=\{w_0,\ldots,w_{p-1}\}$ is the set of $p$ poorest vertices 
in $\ch$. Let $T'=[n]\setminus T=\{w_p,\ldots,w_{n-1}\}$ be the set of $n-p$
richest vertices in $\ch$.  For all $v\in T$, $\ch(v)\geq 0$.
Borrowing at $T$ shifts all chip counts in $T$ up by $m(n-p)+c$,
so $\beta_T(\ch)(v)\geq m(n-p)+c$ for all $v\in T$.
On the other hand, for all $z\in T'$, $\ch(z)\leq \ch(w_{n-1})
 =x_{n-1}(\ch)=m(n-1)+c-g_{n-1}<m(n-1)+c$.
Borrowing at $T$ shifts all chip counts in $T'$ down by $mp$,
so $\beta_T(\ch)(z)<m(n-p-1)+c$ for all $z\in T'$.
It follows that the list of vertices in $\beta_T(\ch)$,
from poorest to richest, is $(w_p,\ldots,w_{n-1},w_0,\ldots,w_{p-1})$.
This is the label sequence $w(\beta_T(\ch))$.

Let $\beta_T(\ch)$ have area vector $g^*=(g_0^*,g_1^*,\ldots,g_{n-1}^*)$ 
and path $\pi(\beta_T(\ch))=\{(x_i^*,i):0\leq i<n\}$.
We must prove $(g_0^*,\ldots,g_{n-1}^*)
=(g_p,\ldots,g_{n-1},g_0-c, \ldots,g_{p-1}-c)$.
For $i$ in the range $0\leq i<n-p$, 
$x_i^*=\ch(w_{p+i})-mp=x_{p+i}(\ch)-mp,$ so
\[ g_i^*=mi+c-x_i^*=m(p+i)+c-x_{p+i}(\ch)=g_{p+i}. \]
For $i$ in the range $n-p\leq i<n$,
$x_i^*=\ch(w_{i-(n-p)})+m(n-p)+c=x_{i-(n-p)}(\ch)+m(n-p)+c$, so
\[ g_i^*=mi+c-x_i^*=m(i-(n-p))-x_{i-(n-p)}(\ch)=g_{i-(n-p)}-c. \qedhere \]
\end{proof}

\begin{example}
In Example~\ref{ex:chip}, we can take $p=4$, $T=\{2,1,3,4\}$,
and apply $\beta_T$ to $\ch$ to get $\beta_T(\ch)=(12,9,13,13,6,0)$. 
This converts $g=G(\ch)=(3,2,3,5,4,0)$ to 
$g^*=G(\beta_T(\ch))=(4,0,-1,-2,-1,1)$.
\end{example}

We finish the proof of Theorem~\ref{thm:main-skc} by
comparing the conclusions of Conditions~(C2$'$) and~(A2$'$).
The statement ``$\beta_T(\ch)$ fails Condition~(C1)'' 
is equivalent to ``$G(\beta_T(\ch))$ fails Condition~(A1$'$),''
which (by Lemma~\ref{lem:Cp-area-vec})
 is equivalent to ``$\pos(C^p(G(\ch)))\leq k$.''
Thus, Condition~(C2$'$) for $\ch$ is equivalent to Condition~(A2$'$)
for $G(\ch)$.

\subsection{General $\mcG$-Valued Chip Firing}
\label{subsec:gen-chip}

So far, we have studied chip configurations with values in $\mcG$
only for certain weighted complete graphs with a sink vertex.
Here we briefly indicate how to extend the general setup of
Section~\ref{subsec:motivation} to the setting of $\mcG$-valued chip 
configurations.

Let $\mcG$ be an additive subgroup of $\mathbb{R}$.  
A \emph{$\mcG$-weighted graph} is a simple graph $G=(V,E)$
together with a \emph{weight function} $\wt:E\rightarrow \mcG\cap\R_{>0}$ 
that assigns a positive weight in $\mcG$ to each edge of $G$.
Let $(v_1,v_2,\ldots,v_N)$ be a fixed total ordering of $V$.
The \emph{Laplacian} of the $\mcG$-weighted graph $G$ is the 
$N\times N$ matrix $\LL$ with entries
\begin{equation*}
\LL_{i,j}=\begin{cases}
          \,\,\sum_{e:\ \text{$e$ touches $v_i$}} \wt(e)
         \quad & \text{if}\,\, i=j;  \\
          \,\, -\wt(e)  \quad & \text{if}\,\,  i\neq j  \,\, 
           \text{and}\,\,  e = \{v_i,v_j\}\in E.
     \end{cases}
\end{equation*} 

The restriction to simple graphs is no real loss of generality.
To model graphs with multiple edges between the same two vertices,
we combine all those edges into a single edge whose weight is the
sum of the original edge weights. This modification produces a simple
graph with the same Laplacian as the original graph.

A \emph{$\mcG$-valued chip configuration} is a function 
$\ch:V \rightarrow \mcG$.  Chip configurations $\ch$ and $\ch^*$ are
\emph{chip-firing equivalent} if $\ch-\ch^*\in\LL(\Z^N)$.  
The \emph{chip-firing group} of the edge-weighted graph $G$ is
$\Pic(G,\wt) = \mcG^N/\LL(\Z^N)$. This group, which may no longer be
discrete, is the $\Z$-cokernel of the Laplacian. Define $\Pic^0(G,\wt)$ to
be the subgroup of $\Pic(G,\wt)$ consisting of classes represented
by degree-zero chip configurations. Given a sink $q\in V$ and
an abstract simplicial complex $\Delta$ on $V'=V\setminus\{q\}$,
we define \emph{$\Delta$-critical configurations} 
exactly as in Section~\ref{subsec:motivation}. 
It would be interesting to see if Theorem~\ref{thm:delta-critical}
extends to the setting of $\mcG$-valued chip configurations.
Our analysis of $k$-skeletal objects proves this result in the special
cases where $G$ is a complete graph (with sink and weights as described
earlier) and $\Delta$ is the $(k+1)$-skeleton of $V'$, namely the set of all
nonempty subsets of $V'$ of size $k+1$ or less. This is the origin of the
term ``$k$-skeletal'' for the various combinatorial collections considered
earlier.

\section{$t$-Analogues of Skeletal Paths and Skeletal Functions}
\label{sec:t-analog}

Throughout this section, we take $\mcG=\Z$ and fix positive integer
parameters $n,m,c$. A \emph{statistic} on area vectors is 
any function $\stat:\AV_n\rightarrow\Z_{\geq 0}$. 
A \emph{$t$-analogue of $|\skv_k|$} is the polynomial generating function
$\skv_k(t;\stat)=\sum_{g\in\skv_k} t^{\stat(g)}$. 

We prove the following $t$-analogue of Theorem~\ref{thm:main-skp}
and Corollary~\ref{cor:main-enum}.

\begin{theorem}\label{thm:skv-t}
Suppose $F:\R\rightarrow\R$ is a function satisfying $F(z)=F(c-z)$
for all $z\in\R$, and $\stat:\AV_n\rightarrow\Z_{\geq 0}$ satisfies
\begin{equation}\label{eq:stat-hyp}
\stat(g_0,g_1,\ldots,g_{n-1})=\sum_{0\leq i<j<n} F(g_i-g_j).
\end{equation}
Then for all $k\in\{0,1,\ldots,n-1\}$, $\skv_k(t;\stat)=\skv_{n-1}(t;\stat)$.
This is a $t$-analogue of the $m$-ballot number
$\frac{c}{(m+1)n+c}\binom{(m+1)n+c}{n}$ that is independent of $k$.
\end{theorem}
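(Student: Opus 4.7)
The plan is to reduce the theorem to a single invariance property: that the statistic \stat\ is preserved by the cycling operator $C$ of Section~\ref{subsec:cycle-area-vec}. The bijection $\skv_k \to \skv_{k'}$ from the proof of Theorem~\ref{thm:main-skp} sends $g \in \skv_k$ to the unique $k'$-skeletal member of its equivalence class, which is $C^j(g)$ for some $j \in \Z$. Lemma~\ref{lem:area-vec} ensures that every $C^j(g)$ remains in $\AV_n$, so once $\stat(C(g)) = \stat(g)$ is established for all relevant $g$, the bijection preserves \stat\ and the desired equality $\skv_k(t;\stat) = \skv_{n-1}(t;\stat)$ follows by iteration.

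The key computation is to split the double sum defining $\stat(C(g))$ according to whether the index $j=n-1$, which now holds the cycled entry $g_0 - c$, appears. The terms with $j < n-1$ reindex cleanly to the portion of $\stat(g)$ summed over pairs $(a,b)$ with $1 \leq a < b < n$, since both arguments are merely shifted by one. The terms with $j = n-1$ produce a sum of the form $\sum_{1 \leq a < n} F(g_a - g_0 + c)$, and the symmetry $F(z) = F(c-z)$ converts each summand to $F(g_0 - g_a)$, which matches the $i = 0$ portion of $\stat(g)$. The two pieces then reassemble into $\stat(g)$ exactly.

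Once $C$-invariance is in hand, setting $t = 1$ in $\skv_{n-1}(t;\stat)$ recovers $|\skv_{n-1}|$, which by Corollary~\ref{cor:main-enum} equals the $m$-ballot number $\frac{c}{(m+1)n+c}\binom{(m+1)n+c}{n}$, justifying the $t$-analogue interpretation. The main obstacle is essentially absent: the hypothesis $F(z) = F(c-z)$ is evidently engineered to offset the shift by $-c$ that $C$ introduces in exactly one entry, and the substantive content of the theorem is the observation that statistics of this symmetric pairwise form interact compatibly with the skeletal cycling equivalence classes. One minor bookkeeping point to watch is that intermediate vectors $C^j(g)$ along the orbit need not lie in $S$, but since the invariance of \stat\ under $C$ holds on all of $\AV_n$, this causes no trouble.
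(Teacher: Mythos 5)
Your proposal is correct and follows essentially the same route as the paper: both establish $\stat(C(g))=\stat(g)$ by isolating the terms involving the cycled entry $g_0-c$ and applying the symmetry $F(z)=F(c-z)$, then transport the statistic along the canonical bijection between equivalence-class representatives from Theorem~\ref{thm:skp-eqv-class}. Your added remark that the $C$-invariance holds on all area vectors (so intermediate orbit elements outside $S$ cause no trouble) is a correct and worthwhile clarification, but the argument is otherwise the paper's own.
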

\begin{proof}
Let $g=(g_0,g_1,\ldots,g_{n-1})$ be any area vector. 
Since $C(g)=(g_1,\ldots,g_{n-1},g_0-c)$, we have
\begin{align*}
 \stat(C(g)) &= \sum_{1\leq i<j<n} F(g_i-g_j) + \sum_{0<j<n} F(g_j-(g_0-c))\\
 &= \sum_{1\leq i<j<n} F(g_i-g_j) + \sum_{0<j<n} F(c-(g_0-g_j))\\
 &= \sum_{1\leq i<j<n} F(g_i-g_j) + \sum_{0<j<n} F(g_0-g_j)
 = \sum_{0\leq i<j<n} F(g_i-g_j)=\stat(g).
\end{align*}
Therefore, any area vector $h$ reachable from $g$ by applying powers of $C$
has $\stat(h)=\stat(g)$. Recall (from Theorem~\ref{thm:skp-eqv-class} and the 
following statement) that there is a bijection $\skv_k\rightarrow\skv_{n-1}$
sending $g\in\skv_k$ to the unique Dyck area vector $h\in\skv_{n-1}$
with $g\sim h$. Using this bijection, we compute
\[ \skv_k(t;\stat)=\sum_{g\in\skv_k} t^{\stat(g)}
                  =\sum_{h\in\skv_{n-1}} t^{\stat(h)}=\skv_{n-1}(t;\stat).
\qedhere \]
\end{proof}

Some statistics that arise in the theory of $q,t$-Catalan numbers have
the form~\eqref{eq:stat-hyp}. For example, taking $c=m=1$,
Haglund's celebrated combinatorial formula~\cite{hag-mac-conj,hag-book} 
for the $q,t$-Catalan numbers 
can be written $\Cat_n(q,t)=\sum_{g\in\skv_{n-1}} q^{\area(g)}t^{\dinv(g)}$,
where $\dinv(g_0,g_1,\ldots,g_{n-1})$ is the number of pairs $i<j$
with $g_i-g_j\in\{0,1\}$. This has the form~\eqref{eq:stat-hyp} if
we take $F$ to be the characteristic function of $\{0,1\}$, namely
$F(0)=F(1)=1$ and $F(z)=0$ for all other real $z$. Since $\dinv$ and
$\area$ are known to be equidistributed on Dyck vectors, 
Theorem~\ref{thm:skv-t} shows that for every $k$,
\[ \sum_{g\in\skv_k} q^{\dinv(g)}=\sum_{h\in\skv_{n-1}} q^{\area(h)}, \]
which is the $q$-analogue of the Catalan numbers first studied
by F\"urlinger and Hofbauer~\cite{fur-hof}.

For parameters $m=1$ and $c\geq 1$, we can define $\dinv^{(c)}(g)$ to
be the number of pairs $i<j$ with $0\leq g_i-g_j\leq c$. In this variation,
$F(z)=1$ for $0\leq z\leq c$ and $F(z)=0$ otherwise.
For parameters $c=1$ and $m\geq 1$, we can use the function
$F(z)=\max(0,m+1/2-|z-1/2|)$. On integer inputs, $F(z)=m+1-z$
for $1\leq z\leq m$, $F(z)=m+z$ for $-m\leq z\leq 0$, and $F(z)=0$
otherwise. So this $F$ recovers the ``slope-$m$ $\dinv$ statistic'' 
used to define higher-order $q,t$-Catalan numbers~\cite[Sec.~2.1]{loehr-mcat}.
For general $c,m\in\Z_{>0}$, we can take $F$ to be the piecewise linear
function that goes from $(-m,0)$ to $(0,m)$ to $(c,m)$ to $(c+m,0)$
and is $0$ outside this range. Or we may use $F(z)=\max(0,m+c/2-|z-c/2|)$.

The $q,t$-analogue $\skv_k(q,t)=\sum_{g\in\skv_k} q^{\area(g)}t^{\dinv(g)}$ 
is not independent of $k$. It would be interesting to compare the combinatorial
properties of these polynomials to $q,t$-Catalan polynomials and their
generalizations. Alternatively, perhaps there is a $k$-skeletal variation
of the area statistic on $\skv_k$ which, when paired with $\dinv$, does give
the $q,t$-Catalan polynomial for every $k$.

A result similar to Theorem~\ref{thm:skv-t} holds for $t$-analogues of
$k$-skeletal functions. 

\begin{theorem}\label{thm:skf-t}
Extend $C$ to act on pairs consisting of an
area vector $g$ and a label sequence $w$ by writing
\[ C((g_0,g_1,\ldots,g_{n-1}),(w_0,w_1,\ldots,w_{n-1}))
    =((g_1,\ldots,g_{n-1},g_0-c),(w_1,\ldots,w_{n-1},w_0)).\]
If $\stat$ is a statistic on labeled paths that satisfies
$\stat(C(g,w))=\stat(g,w)$ for all inputs $(g,w)$, then
$\sum t^{\stat(g,w)}$ (summed over all pairs $(g,w)$ encoding
functions in $\skf_k$) is a $t$-analogue of $|\skf_k|$ that is 
independent of $k$. 
\end{theorem}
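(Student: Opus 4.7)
The plan is to replicate the argument of Theorem~\ref{thm:skv-t}, now working with pairs $(g,w)$ rather than area vectors alone. First I would recall from Section~\ref{subsec:prove-thm-skf} the explicit form of the bijection $\skf_k \to \skf_{k'}$ used in the proof of Theorem~\ref{thm:main-skf}: a function $f \in \skf_k$ with encoding $(g,w) = (G(\pi(f)), w(f))$ is mapped to the function $f' \in \skf_{k'}$ whose encoding $(g',w')$ has $g'$ equal to the unique $k'$-skeletal representative of the $\sim$-equivalence class of $g$, and $w'$ obtained from $w$ by the same cyclic rotation that transforms $g$ into $g'$.

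The crucial step is to recognize that this lockstep cycling is exactly iterated application of the extended operator $C$ on pairs defined in the statement. In the proof of Theorem~\ref{thm:main-skf}, each transition from one element of $S$ to the next in the equivalence class takes the form
\[ \bigl( (g_0,\ldots,g_{n-1}), (w_0,\ldots,w_{n-1}) \bigr) \longmapsto \bigl( (g_{s+t},\ldots,g_{n-1},g_0-c,\ldots,g_{s+t-1}-c), (w_{s+t},\ldots,w_{n-1},w_0,\ldots,w_{s+t-1}) \bigr), \]
which is precisely $C^{s+t}$ applied to the pair. Composing finitely many such transitions (moving forward with $C$ or backward with $C^{-1}$ depending on whether $k<k'$ or $k>k'$), we obtain $(g',w') = C^j(g,w)$ for some integer $j$ depending on $(g,w)$, $k$, and $k'$.

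The hypothesis $\stat(C(g,w)) = \stat(g,w)$, together with the fact that $C$ is a bijection on pairs, implies invariance of $\stat$ under $C^{-1}$ as well, hence under every integer power $C^j$ by induction on $|j|$. Therefore $\stat(g',w') = \stat(g,w)$, so the bijection $\skf_k \to \skf_{k'}$ is $\stat$-preserving, and the generating function $\sum t^{\stat(g,w)}$ does not depend on $k$. I do not foresee any genuine obstacle; the only delicate point worth noting is that intermediate pairs $C^i(g,w)$ produced along the way need not themselves satisfy the ``labels increasing within each run'' convention for labeled paths, but this is harmless since the hypothesis on $\stat$ is required to hold on all pairs, not merely on those arising from valid labeled path encodings.
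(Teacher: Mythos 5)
Your proposal is correct and follows essentially the same route as the paper, which proves this theorem by observing that the bijection $\skf_k\rightarrow\skf_{k'}$ from Section~\ref{subsec:prove-thm-skf} cycles the label sequence in lockstep with the area vector, i.e., acts as a power of the extended operator $C$ on pairs, so a $C$-invariant statistic is preserved. Your closing remark about intermediate pairs is a fair observation, though the paper's construction only ever passes through pairs obtained by cycling whole runs (so they remain valid labeled paths); in any case the hypothesis as stated covers all inputs.
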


This theorem follows from the bijective proof of
Theorem~\ref{thm:main-skf} (Section~\ref{subsec:prove-thm-skf}).

For example, let $m=c=1$ and define $\dinv(g,w)$ to be the number
of pairs $i<j$ with $g_i-g_j=0$ and $w_i<w_j$, or $g_i-g_j=1$ and $w_i>w_j$
(see~\cite[Chapter 5]{hag-book}).
It is easy to check that $\dinv$ is preserved by $C$.
The $q,t$-analogue of classical parking functions
studied in $q,t$-Catalan theory is $\sum_{(g,w)} q^{\area(g)}t^{\dinv(g,w)}$
where we sum over labeled Dyck paths of height $n$.
Setting $q=1$ and summing over labeled paths for $k$-skeletal functions
gives a $t$-analogue that is independent of $k$. Here too, perhaps there
are variations of area (depending on $k$) that pair with $\dinv$ to 
give the same $q,t$-polynomial for every $k$.

\section{First-Return Recursion and Bijection}
\label{sec:first-return}

Corollary~\ref{cor:main-enum} provides an enumeration of the number of
$k$-skeletal paths when $\mcG=\Z$. Here we give an alternative proof
when $c=1$ using a generalization of the first-return recursion for
the Catalan numbers.

For any $n\geq 0$ we identify elements of $P_n(\mathbb{Z})$ with the
classical lattice paths starting at the origin. We write $\Dmngen$ for
the set of \emph{augmented $m$-Dyck paths of height $n$}, that is, the
subset of paths in $P_n(\mathbb{Z})$ that remain weakly above the line
$x=my$ except for their last step, which we require to be an east
step ending at $(mn+1,n)$. The cardinality of $\Dmngen$ is well known
to be given by the Fuss-Catalan number $\Cmngen =
\frac{1}{mn+1}\binom{mn+n}{n}$ (see~\cite{duchon,vonFuss}).
We often identify a path in $\Dmngen$ with a certain string of
$n$ north steps and $mn+1$ east steps, which may be translated
to a starting point other than the origin in some situations.

\begin{prop}[See~\cite{catalan,larcombe,tedford}]\label{prop:decomp}
  For $n,m>0$, any $\pi\in\Dmngen$ has a unique decomposition of the
  form
  \begin{equation}\label{eq:decomp}
    \N\pi_1\pi_2 \cdots \pi_{m+1}
  \end{equation}
    where each $\pi_j \in \Dmn{m}{p_j}$ for some $p_j\geq 0$ and $p_1
    + \cdots + p_{m+1} = n-1$.  Furthermore, if $G(\pi) =
    (g_0,g_1,\ldots,g_{n-1})$, then for any $i$ with $1\leq i\leq p_1
    + \cdots + p_j$, $g_i \geq m-j+2$.
\end{prop}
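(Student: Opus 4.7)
The plan is to construct the decomposition explicitly via a first-passage argument using the \emph{level} function $\lambda(x, y) = x - my$, which measures signed horizontal distance from the reference line $x = my$. Under this statistic, an east step raises $\lambda$ by $1$ and a north step lowers it by $m$. For any $\pi \in \Dmngen$, the Dyck condition forces $\lambda \leq 0$ at every vertex of $\pi$ except the terminal one $(mn+1, n)$, at which $\lambda = 1$. Combined with $x_0 \geq 0$, the constraint $x_0 \leq 0$ forces $x_0 = 0$, so $\pi$ must begin with the required north step, landing at $(0, 1)$ at level $-m$.

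With that initial N set aside, I would define the cut points for $\pi_1, \ldots, \pi_{m+1}$ by first passages of the running level. Let $\lambda_t$ denote the level after the $t$-th step of $\pi$, set $t_0 = 1$, and for each $j \in \{1, \ldots, m+1\}$ define $t_j$ as the smallest $t > t_{j-1}$ with $\lambda_t = j - m$. Since $\lambda$ can only increase in unit increments (via east steps), level $j - m$ is necessarily first reached by an east step coming up from level $j - 1 - m$, and the first-passage times are automatically ordered $1 = t_0 < t_1 < \cdots < t_{m+1} = (m+1)n + 1$ (the last equality because $\lambda = 1$ occurs only at the terminal vertex). I would then set $\pi_j$ to be the subpath consisting of steps $t_{j-1}+1$ through $t_j$.

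To verify $\pi_j \in \Dmn{m}{p_j}$ for some $p_j \geq 0$, I view $\pi_j$ in its own coordinate system (shifted to start at the origin). Minimality of $t_j$ guarantees that the level strictly inside $\pi_j$ never reaches $j - m$, so after the shift its level stays $\leq 0$ until the final east step, which lifts it to $1$---exactly the augmented $m$-Dyck condition. If $p_j$ counts the north steps inside $\pi_j$, the net level change $+1$ forces $mp_j + 1$ east steps, so $\pi_j \in \Dmn{m}{p_j}$ and $\sum_j p_j = n - 1$. For the stated $g$-inequality, an index $i$ satisfying $\sum_{l < j'} p_l < i \leq \sum_{l \leq j'} p_l$ for some $j' \leq j$ corresponds to a north step lying inside $\pi_{j'}$; its starting vertex $(x_i, i)$ has level $\leq j' - 1 - m$, and rearranging $x_i - mi \leq j' - 1 - m$ yields $g_i \geq m - j' + 2 \geq m - j + 2$.

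Uniqueness comes essentially for free: any decomposition of the stated form forces the level right after each $\pi_j$ to be exactly $j - m$ and strictly inside $\pi_j$ to stay $\leq j - 1 - m$, pinning down the cut points as the first-passage times above. The main bookkeeping hurdle will be handling the degenerate cases $p_j = 0$, in which $\pi_j$ collapses to a single east step, together with confirming that dips of the level well below $j - 1 - m$ inside earlier blocks cannot create a spurious earlier first passage of level $j - m$---they cannot, because the required upcrossing from $j - 1 - m$ to $j - m$ is only available once all earlier blocks, each capped at strictly lower levels, have been exited.
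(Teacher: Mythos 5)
Your proposal is correct and follows essentially the same route as the paper: the paper's "shortest initial segment ending on the line $x=my-m+j$" is precisely your first-passage time of level $j-m$, and both arguments rest on the observation that the level can only increase by unit east steps, which orders the first passages and yields existence, uniqueness, and the area-vector bounds in the same way. The only difference is cosmetic (your $\lambda=x-my$ versus the paper's lines $x=my-b$), and your handling of the degenerate $p_j=0$ blocks and of the $g_i$ inequality matches the paper's computation exactly.
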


\begin{proof}
  Let $\pi\in \Dmn{m}{n}$. We note that $\pi$ begins on the line
  $x=my$ and ends on the line $x=my+1$. Consider any point $(x_0,y_0)$
  on $\pi$ lying on the line $x=my-b$ for some $b\in
  \mathbb{Z}$. If the next step in $\pi$ is a north step, then the
  next point on $\pi$ lies on the line $x=my-b-m$; if an
  east step, then the next point on $\pi$ lies on the line
  $x=my-b+1$.

  Since $\pi$ stays weakly above $x=my$ except at the end, it must
  begin with a north step. So the first point on the path after the
  starting point of $(0,0)$ is $(0,1)$, which is on the line $x=my-m$. 
As $\pi$ ends at the point $(mn+1,n)$, which is on the line $x=my+1$, it must
  visit each of the lines $x=my-i$ at least once, for each $i$ in the range
 $m\geq i\geq -1$.  Furthermore, its initial visit to $x=my-j$
  must occur before any visit to $x=my-i$ whenever $m\geq j >
  i\geq -1$. Letting $\N$ denote a north step and $\E$ an east step, it
  follows that $\pi$ has a unique factorization of the form
  $\N\pi_1\pi_2 \cdots \pi_{m+1}$ where for each $j$ with $1\leq j\leq
  m+1$, $\N\pi_1\pi_2 \cdots \pi_j$ is the shortest initial segment
  of $\pi$ ending on the line $x=my-m+j$. By construction,
  each $\pi_j\in \Dmn{m}{p_j}$ for some $p_j\geq 0$, as otherwise there
  would be a shorter initial segment ending on the line
  $x=my-m+j$.  Since $\pi$ ends at $(mn+1,n)$, it
  follows that $p_1 + \cdots + p_{m+1} = n-1$.

  The claimed inequalities satisfied by the area vector follow from the
  fact that the north steps in $\N\pi_1\cdots\pi_j$ (except the first one)
  all start weakly left of $x=my-m+j-1$. So the area vector entries for
  this part of the path satisfy $g_i=mi+1-x_i\geq mi+1-(mi-m+j-1)=m-j+2$.
\end{proof}

\begin{corollary}
  \begin{equation*}
    \Cmngen = \sum_{\substack{i_1+\cdots+i_{m+1} = n-1\\i_j \geq 0\text{ for } 1\leq j\leq m+1}} \Cmn{m}{i_1}\Cmn{m}{i_2}\cdots \Cmn{m}{i_{m+1}}.
  \end{equation*}
\end{corollary}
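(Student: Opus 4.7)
The plan is to promote the unique decomposition of Proposition~\ref{prop:decomp} to a bijection between $\Dmngen$ and the disjoint union
\[
\bigsqcup_{\substack{i_1+\cdots+i_{m+1}=n-1\\ i_j \geq 0}} \Dmn{m}{i_1} \times \cdots \times \Dmn{m}{i_{m+1}}.
\]
Once this bijection is in place, the identity follows immediately by taking cardinalities, using $|\Dmngen|=\Cmngen$ together with $|\Dmn{m}{p}|=\Cmn{m}{p}$, and summing by the product rule over the right-hand side.

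Proposition~\ref{prop:decomp} supplies the forward map and its injectivity. For surjectivity I would take any tuple $(\pi_1,\ldots,\pi_{m+1})$ with $\pi_j\in\Dmn{m}{i_j}$ and $i_1+\cdots+i_{m+1}=n-1$, and form the concatenation $\pi=\N\pi_1\pi_2\cdots\pi_{m+1}$ by translating each $\pi_j$ to begin at the endpoint of the previous segment. Two verifications remain: that $\pi$ belongs to $\Dmngen$, and that applying the decomposition of Proposition~\ref{prop:decomp} to this $\pi$ returns the original tuple.

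The membership check reduces to tracking reference lines. After the initial $\N$-step the path sits at $(0,1)$, which lies on the line $x=my-m$. An augmented $m$-Dyck path $\pi_j$ translated to begin on the line $x=my-m+j-1$ stays weakly left of that line except for its final east step, which crosses to the line $x=my-m+j$. For $1\leq j\leq m$ the left-reference line already lies weakly to the left of $x=my$, so the translated $\pi_j$ stays weakly above $x=my$ throughout; for $j=m+1$ the translated $\pi_{m+1}$ stays weakly above $x=my$ except for its final east step, which ends at $(mn+1,n)$. That is precisely the defining condition for $\Dmngen$, and the total north-step count $1+\sum i_j=n$ and east-step count $\sum(mi_j+1)=mn+1$ confirm the endpoint. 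The second verification follows from the characterization in Proposition~\ref{prop:decomp} of each $\N\pi_1\cdots\pi_j$ as the shortest initial segment ending on the line $x=my-m+j$: the same reference-line analysis shows that this shortest initial segment ends exactly where the translated $\pi_j$ ends.

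The only real obstacle is the bookkeeping in the preceding paragraph with translated reference lines; once that is settled, the enumeration on the right-hand side is a routine application of the product rule summed over weak compositions of $n-1$ into $m+1$ nonnegative parts.
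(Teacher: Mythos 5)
Your proposal is correct and takes essentially the same approach as the paper, which states this corollary without proof as an immediate consequence of the existence and uniqueness of the decomposition in Proposition~\ref{prop:decomp}. The surjectivity check you carry out (concatenating translated augmented $m$-Dyck paths and verifying membership in $\Dmngen$ via the reference-line bookkeeping) is exactly the detail the paper leaves implicit, and your verification of it is sound.
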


Let $\Dmnkgen$ denote the set of $k$-skeletal paths for parameters $m$ and $n$ 
(again augmenting each path with one east step at the end), 
and let $\Cmnk{m}{n}{k} = |\Dmnk{m}{n}{k}|$. 

\begin{theorem}\label{thm:kskel-enum}
For all $m,n \geq 1$ and $0 \leq k\leq n-1$, $\Cmngen = \Cmnkgen$
 via an explicit bijection.
\end{theorem}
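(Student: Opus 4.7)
The plan is to prove the theorem by induction on $n$ via a first-return decomposition of $k$-skeletal paths modeled on Proposition~\ref{prop:decomp}. The base case $n=0$ is immediate. For the inductive step, the strategy is to decompose any $\sigma \in \Dmnkgen$ canonically into $m+1$ smaller skeletal pieces, apply the inductive bijection to each piece, and recombine. Combined with Proposition~\ref{prop:decomp}, this will yield an explicit bijection $\Phi: \Dmngen \to \Dmnkgen$.

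The first and most substantial step will be to establish a first-return analog of Proposition~\ref{prop:decomp}: every augmented $k$-skeletal path $\sigma \in \Dmnkgen$ should factor uniquely as $\sigma = \N \sigma_1 \sigma_2 \cdots \sigma_{m+1}$, where each $\sigma_j \in \Dmnk{m}{p_j}{k_j}$ with $p_1 + \cdots + p_{m+1} = n-1$ and skeletal parameters $k_j$ determined by $k$ and $(p_1,\ldots,p_{m+1})$. I would construct this factorization by tracing the successive first visits of $\sigma$ to the lines $x = my - i$ for $i = m, m-1, \ldots, -1$, exactly as in the proof of Proposition~\ref{prop:decomp}. Condition (P2) restricted to each piece would then give the local skeletal constraint for $\sigma_j$, while condition (P1) on the last $k+1$ north steps of $\sigma$ translates into the appropriate compatible condition for $\sigma_{m+1}$.

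Given a decomposition $\pi = \N \pi_1 \cdots \pi_{m+1}$ of $\pi \in \Dmngen$ via Proposition~\ref{prop:decomp}, I would then define $\Phi(\pi) := \N \Phi_{k_1}(\pi_1) \cdots \Phi_{k_{m+1}}(\pi_{m+1})$, where each $\Phi_{k_j}$ is the inductively-defined bijection from $\Dmn{m}{p_j}$ to $\Dmnk{m}{p_j}{k_j}$ and the $k_j$ are exactly those predicted by the $k$-skeletal decomposition of the target. The inverse map comes from decomposing a $k$-skeletal path via its first-return structure and applying $\Phi^{-1}$ inductively to each piece. As a byproduct, the recursion $\Cmnkgen = \sum \Cmnk{m}{p_1}{k_1}\cdots \Cmnk{m}{p_{m+1}}{k_{m+1}}$ collapses inductively to the Fuss--Catalan recursion for $\Cmngen$.

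The main obstacle will be setting up the $k$-skeletal decomposition and tracking how the parameter $k$ restricts to each subpath. Condition (P2) is a sliding-window constraint that can in principle straddle the boundaries between consecutive $\sigma_j$'s, so its preservation under recomposition is not automatic. The key observation I expect to use is that at each transition point between pieces, the path lies on one of the reference lines $x = my - i$, and in particular the first vertex of each $\sigma_j$ sits weakly right of $x = my$; this naturally terminates any run of strictly-above-diagonal vertices crossing a piece boundary. Pinning down the exact dependence of $(k_1,\ldots,k_{m+1})$ on $k$ and $(p_1,\ldots,p_{m+1})$, and verifying that the resulting recursion matches the Fuss--Catalan one, will be the delicate combinatorial work.
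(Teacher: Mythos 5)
There is a genuine gap: the structural lemma your whole plan rests on --- that every $\sigma\in\Dmnkgen$ factors uniquely as $\N\sigma_1\cdots\sigma_{m+1}$ with each $\sigma_j$ a smaller skeletal path --- is false. For $k<n-1$ a $k$-skeletal path need not begin with a north step and need not stay weakly above the line $x=my$; Condition~(P2) only forbids $k+1$ \emph{consecutive} rows whose north steps start strictly left of that line, so the early rows may sit strictly to its right. Concretely, for $n=3$, $m=c=1$, $k=1$, the paths $\E\N\N\E\N\E$ and $\E\N\N\N\E\E$ lie in $\skp_1$ and begin with east steps, so no factorization of the form $\N\sigma_1\sigma_2$ exists. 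The ``key observation'' you invoke to control the sliding-window condition --- that each piece boundary sits on one of the lines $x=my-i$, with the first vertex of each piece weakly right of $x=my$ --- is exactly the property that holds for $m$-Dyck paths (which is why Proposition~\ref{prop:decomp} works) and fails for $k$-skeletal paths, so the successive-first-visit argument has nothing to latch onto on the skeletal side. Relatedly, your proposed $\Phi(\pi)=\N\Phi_{k_1}(\pi_1)\cdots\Phi_{k_{m+1}}(\pi_{m+1})$ produces a path starting with a north step and therefore cannot be surjective onto $\Dmnkgen$.

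The paper's proof keeps the first-return decomposition entirely on the $m$-Dyck side and then \emph{rearranges} the pieces rather than recursing on each in place: writing $\pi=\N\pi_1\cdots\pi_{m+1}$ and letting $s$ be maximal with $p_1+\cdots+p_{s-1}\leq k$, it sends $\pi$ to $\rev{\pi_{s+1}\cdots\pi_{m+1}}\,\N\,\pi_1\cdots\pi_{s-1}\pi_s$, recursing on the single straddling piece $\pi_s$ only when $p_s>k+1$. The reversed tail supplies precisely the initial below-diagonal prefix that your factorization cannot accommodate, while the blocks $\pi_1,\ldots,\pi_{s-1}$ contribute fewer than $k+1$ strictly-above-diagonal rows, so (P2) survives. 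If you want to salvage your outline, you would first need a decomposition theorem for $k$-skeletal paths that separates an initial below-diagonal portion from a Dyck-like suffix; that is in effect what the paper's inverse map $\psibare$ does when it cuts $\omega$ at the first north step ending at a nonnegative level and at the first return to level zero thereafter.
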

\begin{proof}
  We consider $m$ fixed and prove the theorem by induction on $n$. 
For each $k$, $m$, and $n$, we define a map $\phimnkgen: \Dmngen
  \rightarrow \Dmnkgen$ that we prove to be a bijection. To avoid
  clutter, we will frequently suppress the dependencies of $\phimnkgen$
  on $m$, $n$ and $k$ and simply write $\phibare$.

  Given a path $\pi \in \Dmngen$, 
  find the unique decomposition (as in Proposition~\ref{prop:decomp})
  \begin{equation}\label{eq:factor-pi}
    \pi = \N\pi_1 \pi_2 \cdots \pi_{m+1}
  \end{equation}
  such that each $\pi_i \in \Dmn{m}{p_i}$ for some $p_i\geq 0$. Choose
  the maximum $s\in\{1,2,\ldots,m+2\}$ such that
  $p_1+\cdots+p_{s-1}\leq k$ (such $s$ exists since the sum on the
  left side is zero for $s=1$). Write $\rev{\sigma}$ for the stepwise
  reversal of a path $\sigma$.  We define $\phibare$ by
  \begin{equation}
    \phibare(\pi) = \begin{cases}
      \pi, & n=k+1,\\
      \rev{\pi_{s+1} \cdots \pi_{m+1}} \N \pi_1 \cdots \pi_{s-1}\pi_s, & n > k+1 \geq p_s,\\
      \rev{\pi_{s+1} \cdots \pi_{m+1}} \N \pi_1 \cdots \pi_{s-1}\phimnk{m}{p_s}{k}(\pi_s), & n > k+1 < p_s.
      \end{cases}
  \end{equation}

Since $p_1+\cdots+p_{m+1}=n-1$ and $0\leq k\leq n-1$, we have $s=m+2$
if and only if $k+1=n$. Hence, in the definition of $\phibare$ above,
and in the discussion below, we have $s\leq m+1$ whenever $n > k+1$.

For any $n$, when $k=n-1$ it is routine to check that $m$-Dyck paths
coincide with $k$-skeletal paths, so the identity map gives the
required bijection.  This case covers the base case $n=1$ of the
induction. From now on, fix $n>1$ and assume $\phimnk{m}{p}{k}$ is a
bijection for all $p<n$.

In the rest of this proof, define the \emph{level} of a point $(x,y)\in\Z^2$
to be $\lvl(x,y)=my-x$, which is the signed horizontal distance from $(x,y)$
to the line $x=my$. We rephrase Conditions~(P0), (P1), and (P2) for
$k$-skeletal paths (see Section~\ref{subsec:path-def}) in terms of levels.
Condition (P0) is automatically satisfied since all lattice paths considered here
start at the origin. Since $c=1$ here, Conditions~(P1) and~(P2) may be 
rewritten as:
\begin{itemize}
\item[(P1$'$)] The last $k+1$ north steps of the path start at
 levels $\geq 0$.
\item[(P2$'$)] There do not exist $k+1$ consecutive rows such 
that the north steps in those rows all start at levels $>0$.
\end{itemize}

Suppose $\pi_i$ (one of the factors in~\eqref{eq:factor-pi})
appears as a subword of some path and starts in that path at level $\ell$.
Then all steps of $\pi_i$ start at levels $\geq\ell$,
and the final east step of $\pi_i$ ends at level $\ell-1$.
Next suppose $\rev{\pi_i}$ appears as a subword of some path
and starts in that path at level $\ell$. Then the first step of $\rev{\pi_i}$
goes east to level $\ell-1$, all subsequent steps of $\rev{\pi_i}$
start at levels $<\ell$, and $\rev{\pi_i}$ ends at level $\ell-1$.

Using the above observations, consider the levels reached by various
subpaths of $\phibare(\pi)$ when $n > k+1$. First consider the shared
prefix $\rev{\pi_{s+1}\cdots\pi_{m+1}}\N$. If $s=m+1$ then
$\rev{\pi_{s+1}\cdots\pi_{m+1}}$ is the empty word.  If $s<m+1$,
$\rev{\pi_{s+1}\cdots\pi_{m+1}}$ is the concatenation of $m+1-s$
factors $\rev{\pi_{m+1}},\ldots,\rev{\pi_{s+1}}$, it starts at level
$0$ with an east step, it has all subsequent steps starting at negative
levels, and it ends at level $s-(m+1) < 0$. In either case, the terminal
north step of $\rev{\pi_{s+1}\cdots\pi_{m+1}}\N$ starts at level
$s-(m+1)\leq 0$ and ends at level $s-1\geq 0$; this north step is the first 
step in $\phibare(\pi)$ ending at a nonnegative level.

We now consider the subpath $\pi_1\cdots\pi_{s-1}$. If $s-1=0$ then it
is empty. Otherwise, by the previous two paragraphs we know it has all
its steps starting at positive levels and its final step ending at
level $0$. Finally, we consider the subpath of $\phibare(\pi)$
following $\pi_1\cdots\pi_{s-1}$. In the case of $k+1 \geq p_s$, the
final subpath $\pi_s$ has all its steps starting at levels $\geq 0$,
and ends at level $-1$.  In the case of $k+1 < p_s$, the final subpath
$\phimnk{m}{p_s}{k}(\pi_s)$ of $\phibare(\pi)$ starts at level $0$,
ends at level $-1$, and satisfies Conditions~(P1$'$) and~(P2$'$) by
the induction hypothesis.

We now prove that $\phibare(\pi)$ satisfies Condition~(P1$'$) when $n
> k+1$.  We know that $p_1+\cdots+p_s\geq k+1$, by definition of $s$.
In the case of $k+1 \geq p_s$, the last $k+1$ north steps in
$\phibare(\pi)$ all appear in the suffix $\pi_1\pi_2\cdots\pi_s$. By
the preceding analysis, all these north steps start at levels $\geq
0$, as needed.  In the case of $k+1<p_s$, the last $k+1$ north
steps in $\phibare(\pi)$ all appear in the suffix
$\phimnk{m}{p_s}{k}(\pi_s)$.  By induction hypothesis, these north
steps all start at levels $\geq 0$.

We now prove that $\phibare(\pi)$ satisfies Condition~(P2$'$) when $n
> k+1$.  To get a contradiction, assume there are $k+1$ rows in
$\phibare(\pi)$ where the north steps in those rows all start at
levels $>0$.  The last factor of $\phibare(\pi)$, namely $\pi_s$ when
$k+1 \geq p_s$ or $\phimnk{m}{p_s}{k}(\pi_s)$ when $k+1 < p_s$, either
has no north steps at all or has first north step starting at level
$0$.  The north steps violating Condition~(P2$'$) must either all
occur after this north step or all occur before it.  We rule out the
first possibility as follows.  When $k+1\geq p_s$ there are not enough
north steps in $\pi_s$ (following the first north step) to cause a
violation.  When $k+1 < p_s$ we reach the same conclusion by invoking
the induction hypothesis to see that $\phimnk{m}{p_s}{k}(\pi_s)$ has
no violation.  Next we rule out the possibility of a violation earlier
in $\phibare(\pi)$.  By the level analysis above, the $k+1$ violating
north steps must all occur in the subword $\pi_1\pi_2\cdots \pi_{s-1}$
of $\phibare(\pi)$.  But, regardless of how $k+1$ compares to $p_s$,
$p_1+\cdots+p_{s-1}<k+1$ by definition of $s$, so there are not enough
available north steps in this region to cause a violation.

So far, we have proved that $\phimnkgen$ maps $\Dmngen$ into $\Dmnkgen$. 
It remains to show that $\phimnkgen$ is a bijection. 
We define an inverse $\psibare=\psimnkgen: \Dmnkgen
  \rightarrow \Dmngen$ recursively as follows.
Decompose $\omega\in\Dmnkgen$ as $\omega = \omega_1 \N \omega_2
  \omega_3$ where:
  \begin{itemize}
    \item $\omega_1 \N$ is the shortest initial segment of $\omega$
      ending at a nonnegative level; 
    \item $\omega_1 \N \omega_2$ is the shortest initial segment 
      ending at level zero. 
  \end{itemize}
  Note that $\omega_1$ is either empty or begins with an east step and
  that $\omega_2$ may be empty. Let $p$ be the number of north steps 
 in $\omega_3$. We define $\psibare$ using three cases.
 \begin{equation*}
   \psibare(\omega) = \begin{cases}
     \omega, & n=k+1,\\
     \N\omega_2\omega_3\rev{\omega_1}, & n>k+1\geq p,\\
     \N\omega_2 \psimnk{m}{p}{k}(\omega_3)\rev{\omega_1}, & n > k+1 < p.     
   \end{cases}
\end{equation*}
By a level analysis similar to what appears above, one may show that:
$\omega_3$ belongs to $\Dmnk{m}{p}{k}$ when $n > k+1 < p$;
$\psibare$ does map $\Dmnkgen$ into $\Dmngen$;
and $\psibare$ is the two-sided inverse of $\phibare$.
We omit these details. 
\end{proof}

\begin{example}\label{ex:phi}
 Consider $k=1$ and
 \begin{equation*}
  \pi =\mathrm{N}\overbracket[0.5pt]{\mathrm{NEEE}}^{\pi_1}\overbracket[0.5pt]{\mathrm
{E}}^{\pi_2}
 \overbracket[0.5pt]{\mathrm{NNENEEENNEEENEEEEEE}}^{\pi_3}\in\Dmn{2}{8},
 \end{equation*}
 as illustrated in Figure~\ref{fig:phi}(a). Since $s=3$, we find that
 $\phibare(\pi) =
 \N\pi_1\pi_2\phibare(\pi_3)=\mathrm{NNEEEE}\phibare(\pi_3)$. We now
 decompose $\pi_3$ as
 \begin{equation*}
  \pi_3 = \mathrm{N} \overbracket[0.5pt]{\mathrm{NENEEENNEEENEEEE}}^{\pi_1'}
 \overbracket[0.5pt]{\E}^{\pi_2'} \overbracket[0.5pt]{\E}^{\pi_3'},
 \end{equation*}
 from which it follows that $s'=1$ and that
 \begin{equation*}
 \phibare(\pi_3) = \rev{\pi_2'\pi_3'}\N\phibare(\pi_1') =
\mathrm{EEN}\phibare(\pi_1').
 \end{equation*}
 Then we decompose $\pi_1'$ as
 \begin{equation*}
  \pi_1' = \N \overbracket[0.5pt]{\E}^{\pi_1''}
\overbracket[0.5pt]{\mathrm{NEEE}}^{\pi_2''}
\overbracket[0.5pt]{\mathrm{NNEEENEEEE}}^{\pi_3''},
 \end{equation*}
 so $s''=3$ and $\phibare(\pi_1') = \N \pi_1'' \pi_2'' \phibare(\pi_3'') = \N
\mathrm{E} \mathrm{NEEE} \phibare(\pi_3'')$.
 We now decompose $\pi_3''$ as
 \begin{equation*}
  \pi_3'' =\mathrm{N}\overbracket[0.5pt]{\mathrm{NEEE}}^{\pi_1'''}\overbracket[0.5pt]{\mathrm{NEEE}}^{\pi_2'''}
 \overbracket[0.5pt]{\mathrm{E}}^{\pi_3'''},
 \end{equation*}
 from which it follows that $s'''=2$ and $\phibare(\pi_3'') =
\rev{\pi_3'''}\N\pi_1'''\phibare(\pi_2''') =
\mathrm{ENNEEE}\phibare(\pi_2''')$. Finally, when we decompose $\pi_2'''$
we find that $s'''' = 4$, so $\phibare(\pi_2''')=\pi_2'''=\mathrm{NEEE}$.
 Combining our results, we conclude that
 \begin{align*}
 \phibare(\pi) &= \mathrm{NNEEEE}\phibare(\pi_3)\\
 &= \mathrm{NNEEEE}\,\mathrm{EEN}\phibare(\pi_1')\\
 &= \mathrm{NNEEEE}\,\mathrm{EEN}\,\mathrm{NENEEE}\,\phibare(\pi_3'')\\
 &=\mathrm{NNEEEE}\,\mathrm{EEN}\,\mathrm{NENEEE}\,\mathrm{ENNEEE}\phibare(\pi_2''')\\
 &=\mathrm{NNEEEE}\,\mathrm{EEN}\,\mathrm{NENEEE}\,\mathrm{ENNEEE}\,\mathrm{NEEE},
 \end{align*}
 as illustrated in Figure~\ref{fig:phi}(b).
\end{example}
\begin{figure}[h]
\begin{center}
  \includegraphics[width=0.9\linewidth]{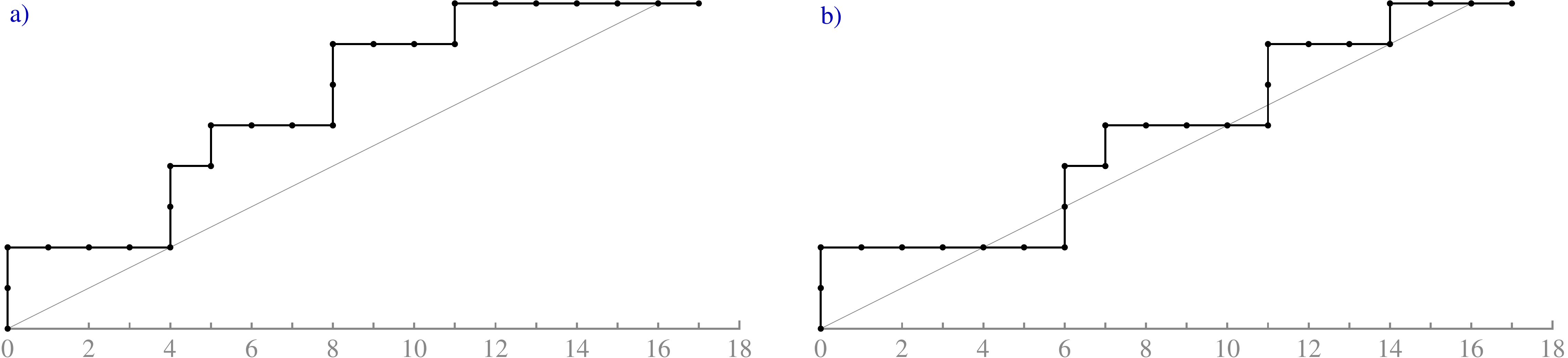}
  \caption{(a) The path $\pi$ from Example~\ref{ex:phi} and (b) its
    image under $\phibare$. Note that we are not showing the reference
    line $x=my+c$, but rather the standard diagonal $x=my$.}
\label{fig:phi}
\end{center}
\end{figure}

\section{Acknowledgments}
 S.B. thanks for Anton Dochtermann, Sam Hopkins, Lionel Levine, Alex
 Postnikov, and Christian Stump for discussion about related
 constructions. We thank Richard Stanley and Catherine Yan for their
 encouragement with this project.


\bibliography{sk080924}{}
\bibliographystyle{plain}


\end{document}